\documentclass[a4paper,12pt,reqno]{amsart}

\usepackage{geometry}
\usepackage{xcolor}
\usepackage[utf8]{inputenc}
\usepackage{upgreek}
\usepackage{amsmath,mathtools,esint,amssymb,enumitem,mathrsfs}
\usepackage[numbers,sort&compress]{natbib}
\usepackage{xurl}
\usepackage{comment}

\usepackage[colorlinks,allcolors=black]{hyperref}

\numberwithin{equation}{section}

\newtheorem{theorem}{Theorem}[section]
\newtheorem{lemma}[theorem]{Lemma}
\newtheorem{proposition}[theorem]{Proposition}
\newtheorem{corollary}[theorem]{Corollary}

\theoremstyle{remark}
\newtheorem{remark}[theorem]{Remark}

\newcommand{\N}{\mathbb{N}}
\newcommand{\R}{\mathbb{R}}
\newcommand{\E}{\mathbb{E}}
\newcommand{\p}{\mathbb{P}}
\newcommand{\Z}{\mathbb{Z}}

\newcommand{\var}{\operatorname{Var}}
\newcommand{\cov}{\operatorname{Cov}}

\begin{document}

\title[Large fluctuations]{Large fluctuations of extended Rademacher random multiplicative functions}

\author{Haozhe Gou}
\address{School of Mathematics, Shandong University, Jinan 250100, China}
\address{D\'epartement de math\'ematiques et de statistique, Universit\'e de Montr\'eal, C.P.~6128, succ.~Centre-ville, Montr\'eal, QC H3C~3J7, Canada}
\email{haozhegou@gmail.com}

\author{Max Wenqiang Xu}
\address{Yau Mathematical Sciences Center, Tsinghua University, Beijing, 100084, China}
\address{Beijing Institute of Mathematical Sciences and Applications, Beijing, 101408, China}
\email{maxxu1729@gmail.com, maxxu@tsinghua.edu.cn}

\subjclass[2020]{Primary 11K65; Secondary 11N37, 60F15}
\keywords{extended Rademacher random multiplicative functions, partial sums, multiplicative chaos, large fluctuations}

\begin{abstract}
Let $f$ be an extended Rademacher random multiplicative function (RMF). We show
that, for every fixed deterministic function $V(x)$ tending to infinity,
almost surely there are arbitrarily large $x$ for which
\[
 \sum_{n\leq x}f(n)
 \geq \frac{\sqrt{x}(\log\log x)^{1/4}}{V(x)}.
\]
The corresponding negative fluctuation holds as well. 
In particular, this gives an affirmative answer to Erd\H{o}s Problem~\#1144.  As a byproduct, our result has a direct corollary giving new almost sure lower bounds $\log\log x$ on the number of sign changes of partial sums up to $x$
for all sufficiently large $x$.

\end{abstract}
\maketitle

\section{Introduction}

\subsection{Main result}
A Rademacher random completely multiplicative function
$f\colon\N\rightarrow\{-1,1\}$ is obtained by taking $(f(p))_p$ to be
independent Rademacher random variables and extending $f$ completely
multiplicatively. Throughout the paper, we refer to this model as the
extended Rademacher RMF.
The more familiar Rademacher RMF is supported only on squarefree integers, and it goes back to
\citet{Wintner1944}, who introduced it as a probabilistic model for the
M\"obius function. Although it is clear nowadays that the Rademacher RMF does not capture much behavior of M\"obius function, we believe that the extended Rademacher RMF is a good model to predict the typical behavior of Dirichlet quadratic characters. 
Write $f(p)=\epsilon_p$ and
$$S_f(x)=\sum_{n\leq x}f(n).$$  
In this paper, our purpose is to prove a
quantitative two-sided fluctuation result for $S_f(x)$. 

One can distinguish this model from the Rademacher
RMF $g(n)$ supported on squarefree integers, by noting that $g(n)=\mu^2(n)f(n)$, where $\mu$ is the M\"obius function, so
that $g(n)=0$ unless $n$ is squarefree.
The distinction is substantial. The variables $g(n)$, indexed by squarefree
$n$, has perfect orthogonality, whereas
$\E f(n)$ equals $1$ when $n$ is a square and equals $0$ otherwise; in
particular, $\E S_f(x)=\lfloor\sqrt x\rfloor$. The two Dirichlet series are
related by
\begin{equation}\label{eq:euler-factorisation-intro}
 \sum_{n\geq1}\frac{f(n)}{n^s}
 =\prod_p\left(1-\frac{\epsilon_p}{p^s}\right)^{-1}
 =\zeta(2s)\prod_p\left(1+\frac{\epsilon_p}{p^s}\right),
 \qquad \mathrm{Re}\,s>1.
\end{equation}
Thus the completely multiplicative  property introduces a  zeta factor with a pole at $s=1/2$. This pole is the analytic expression of the contribution from squares and can be a principal obstruction to transferring some 
results for $g$ directly to $f$.

The study of the large fluctuations of the partial sums of RMFs has been very active.
{In the Rademacher case, \citet{Wintner1944} and \citet{Erdos1985} obtained some of the earliest results in this direction, and \citet{Halasz1983} later obtained an important almost sure $\Omega$-result.}
\citet{Harper2013Gaussian} {subsequently improved Hal\'asz's result} by means of Gaussian comparison.
His later work proved that, for every deterministic function $V(x)$ tending to
infinity, almost surely there are arbitrarily large $x$ for which
\begin{equation}\label{eq:harper-fluctuation}
 \bigg|\sum_{n\leq x}g(n)\bigg|
 \geq
 \frac{\sqrt{x}(\log\log x)^{1/4}}{V(x)}.
\end{equation}
He also proved the corresponding result for the Steinhaus RMF; see
\cite{Harper2023Large}.  This was the first almost sure lower bound for
the partial sums which exceeds $\sqrt{x}$ by a factor tending to infinity.
The proof conditions on the values at the smaller primes and applies a
multivariate Gaussian approximation to the contribution from the remaining
large primes.  Its main difficulty is to show that sufficiently many of
the resulting conditional covariances are small.  Although the theorem is
stated in the absolute value form \eqref{eq:harper-fluctuation}, in the
Rademacher case the conditional symmetry of the large prime vector gives
the corresponding large fluctuations of either sign.

Harper further suggested that the exponent $1/4$ in
\eqref{eq:harper-fluctuation} should be optimal.  More precisely, the
conjectured  upper bound is that, for every
$\varepsilon>0$, almost surely
\begin{equation}\label{eq:harper-conjectural-upper}
 \sum_{n\leq x}g(n)
 \ll_{\varepsilon}
 \sqrt{x}(\log\log x)^{1/4+\varepsilon}.
\end{equation}
One heuristic for this exponent comes from the low moments of the partial
sums.  The results of \citet{Harper2020Low} show that their typical size is
smaller than $\sqrt{x}$ by a factor of order
$(\log\log x)^{1/4}$.  Combining this with the usual
$\sqrt{\log\log x}$ correction suggested by the law of the iterated
logarithm leads precisely to the scale
$\sqrt{x}(\log\log x)^{1/4}$.  The relevant low and high moment estimates
for the underlying random Euler products are developed in
\cite{Harper2020Low,Harper2019High}.

We give a quick summary of the substantial progress on the upper bounds: see, for example,
\cite{Halasz1983,Basquin2012,LauTenenbaumWu2013,Mastrostefano2022,Caich2023}
and the references therein.
Very recently, \citet{DurkanPearceCrump2026} obtained the matching upper
bound
\begin{equation}\label{eq:durkan-pearce-crump-upper}
 \sum_{n\leq x}g(n)
 \ll_{\varepsilon,g}
 \sqrt{x}(\log\log x)^{1/4+\varepsilon}
 \qquad\text{almost surely}.
\end{equation}
They proved the same result for the Steinhaus RMF. Together with Harper's
lower bound, this determines the optimal iterated logarithmic exponent in
both classical RMFs and settles Harper's conjecture.
In an independent work, \citet{Verreault2026} obtained the sharper estimate
\begin{equation}\label{eq:verreault-upper}
 \sum_{n\leq x}g(n)
 \ll_{\varepsilon,g}
 \sqrt{x}(\log_2 x)^{1/4}
 \log_3 x\,\log_4 x\,(\log_5 x)^{1/2+\varepsilon}
 \qquad\text{almost surely},
\end{equation}
for both the Rademacher and Steinhaus RMFs, where $\log_k$ denotes the
$k$-fold iterated logarithm.  We also note that in the weighted case, 
\citet{Atherfold2025} proved that almost surely
$$\sum_{n\leq x}\frac{g(n)}{\sqrt n}\ll(\log\log x)^{3/4+\varepsilon},$$ and obtained
the sharper exponent $1/4+\varepsilon$ under the restriction
$P^+(n)>\sqrt x$. 
He conjectured that this exponent is optimal.  He also showed that {almost surely}
$$S_f(x)\ll\sqrt{x}(\log x)^{1+\varepsilon}$$ for extended
Rademacher RMFs. See also the work by Hardy on the Steinhaus case \cite{Hardy2024Weighted}.

It is in general the case that the three standard RMFs may exhibit
 different levels of difficulty and the Steinhaus case is usually the least technical among the
three. So far, we in general know less in the Rademacher case, while the extended
Rademacher RMF is arguably even more challenging.
This is indeed the case in the fluctuation problem considered here. As we described before, large fluctuations in both the Rademacher and Steinhaus RMFs
have been known since the work of \citet{Harper2023Large}, whereas no corresponding result
was previously available for the extended Rademacher
RMF. In fact, even the weaker original question of Erd\H{o}s remained
open\footnote{After completing the paper, 
we learned through the Erd\H{o}s Problems website that, on September 6, 2026,
Sigurd H{\o}ystad independently announced a solution to the original
question, obtained with the assistance of AI tools, and claimed a complete
formalization of the proof in Lean~4; see \cite{Hoystad2026}.
Their result addresses only the original Erd\H{o}s question, i.e. a qualitative result, whereas the
present work proves the much stronger quantitative result, which we believe to be essentially best possible. The present work was completed independently, and
the proof is different.}. 
The formulation in the 1999 problem collection \emph{Some of Paul's
favorite problems} asks whether
$\limsup_{x\to\infty}|S_f(x)|/\sqrt{x}=+\infty$ almost surely, while the
current online formulation, recorded as Erd\H{o}s Problem~\#1144, asks the
stronger one-sided question whether
$\limsup_{x\to\infty}S_f(x)/\sqrt{x}=+\infty$ almost surely
\cite{PaulsProblems1999,Bloom1144}. 
Our main theorem gives a much stronger quantitative
two-sided strengthening of both formulations.

\begin{theorem}\label{thm:main}
Let $f$ be an extended Rademacher RMF. Then for any fixed deterministic function $V(x)\to\infty$, we have,  almost surely 
\begin{equation}\label{eq:main-theorem-positive}
 S_f(x)=\Omega_{\pm}\bigg(
 \frac{\sqrt{x}(\log\log x)^{1/4}}{V(x)}\bigg).
\end{equation}
Here \(A(x)=\Omega_{\pm}(B(x))\) means that
\(
\limsup_{x\to\infty}{A(x)}/{B(x)}>0\) and \(\liminf_{x\to\infty}{A(x)}/{B(x)}<0 \) for an eventually positive function \(B(x)\).
\end{theorem}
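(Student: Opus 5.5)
We follow Harper's strategy for the Rademacher case \cite{Harper2023Large} and isolate the square contribution coming from the factor $\zeta(2s)$ in \eqref{eq:euler-factorisation-intro}. Choose a rapidly increasing sequence of scales $x_k$, for instance $x_k=e^{e^{k^{C}}}$. Around each $x_k$ we take a family of test points $x_k\le X_{k,1}<\dots<X_{k,M}\le x_k^{1+o(1)}$, spread out multiplicatively. Fix a threshold $y_k=x_k^{1/\log\log x_k}$, or something similar. Every $n\le x$ with $P^+(n)>y_k$ factors uniquely as $n=mp$ with $p>y_k$ a prime (or, more precisely, with the largest prime factor split off, which avoids $p^2\mid n$ problems for large $p$). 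This gives
\[
S_f(x)=\sum_{n\le x,\,P^+(n)\le y_k} f(n)+\sum_{y_k<p\le x}\epsilon_p\sum_{m\le x/p,\,P^+(m)<p}f(m)+(\text{terms with } p^2\mid n).
\]
The smooth part has mean coming from squares of size $\ll\sqrt{x}\,\rho$-small, and we expect its typical size to be $o(\sqrt x)$ by a second moment calculation. The repeated-large-prime terms are negligible in $L^2$.

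Conditional on $\mathcal F_k=\sigma(\epsilon_p:p\le y_k)$, the main term is a sum of independent symmetric variables $\epsilon_p a_p(x)$ with $\mathcal F$-measurable coefficients. The large primes used for different scales $k$ are essentially disjoint, which gives the independence across scales needed for Borel--Cantelli. We apply a multivariate Gaussian approximation (Lindeberg/normal comparison as in Harper) to the vector $(\sum_p\epsilon_p a_p(X_{k,j}))_{j\le M}$. The conditional variance is $\sum_p a_p(X)^2\approx X\int |F_k(1/2+it)|^2\,|\cdot|^2\,dt/\log X$, where $F_k(s)=\prod_{p\le y_k}(1-\epsilon_p p^{-s})^{-1}$. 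Here the zeta factor enters. Near $t=0$, $\zeta(1+2it)$ has a pole, so the extended product is larger near $t=0$. This only helps lower bounds on variances, but it may increase covariances between test points. To control this, we restrict to $|t|\ge 1/\log y_k$ via smoothing, or subtract the $t$-near-$0$ piece by working with the Rademacher-type factorization $F_k(s)=\zeta^{(y_k)}(2s)G_k(s)$, with $G_k$ the squarefree Euler product. We then reduce to Harper's multiplicative chaos lower bounds: with probability bounded below, $\int |G_k|^2$ over the relevant range is $\gg(\log\log x)^{-1/2}\log x$ times the normalising factor. Combined with taking the maximum of $M\approx\log\log x_k$ roughly independent Gaussians, whose maximum is of size $\sqrt{\log M}\sim\sqrt{\log\log\log}$ times... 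Following Harper, we instead choose $M$ so that the maximum gains the factor $(\log\log x)^{1/2}$ against the chaos loss $(\log\log x)^{-1/4}$, yielding $\sqrt{x}(\log\log x)^{1/4}/V$. By conditional symmetry ($\epsilon_p\mapsto-\epsilon_p$ for $p>y_k$ preserves the law given $\mathcal F_k$), the positive and negative events both occur with conditional probability $\gg1$. Lévy's conditional Borel--Cantelli over $k$ then gives both $\Omega_+$ and $\Omega_-$.

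The main obstacle is the covariance step. We must show that for many pairs $X_{k,i},X_{k,j}$ the conditional covariance $\sum_p a_p(X_{k,i})a_p(X_{k,j})$ is small compared to the variances. For this we write the covariance as $\int F_k(1/2+it)\overline{F_k}(1/2+iu)(X_i/X_j)^{it}\dots$ and use decay away from the diagonal. The $\zeta(1+2it)$ singularity produces a coherent, non-oscillating contribution that is common to all test points, essentially the square contribution $\approx\sqrt{X}$ times an $\mathcal F_k$-measurable quantity. Our first attempt is to subtract this deterministic-shape drift explicitly. We would show that it is $O(\sqrt X)$ uniformly, hence negligible against the target $\sqrt X(\log\log X)^{1/4}/V$, and that the remaining covariance matrix satisfies Harper's small-off-diagonal criterion with the $G_k$ chaos in place of the squarefree one.
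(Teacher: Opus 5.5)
Your overall architecture matches the paper: condition on small primes, Gaussian-approximate the large-prime part, get a chaos lower bound for the variance and sparse large covariances, then use conditional symmetry and L\'evy's Borel--Cantelli. But two steps fail as written, and they are exactly where the paper has to go beyond Harper.

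\textbf{The conditioning threshold.} With $y_k=x_k^{1/\log\log x_k}$, an integer $n\le x$ can have many prime factors above $y_k$. So the coefficient $\sum_{m\le x/p,\,P^+(m)<p}f(m)$ depends on $\epsilon_q$ for $y_k<q<p$ and is \emph{not} $\mathcal F_k$-measurable. The main term is then only a martingale with random conditional variance. Your variance and covariance formulas, written via an Euler product over $p\le y_k$, are not the conditional ones. You must take $x<y^2$, as Harper and the paper do ($X^{8/7}\le x\le X^{4/3}$, conditioning on $p\le X$). But then the $X$-smooth part $C_X(x)$ is no longer negligible, and a second moment computation cannot show it is small: $\E C_X(x)^2\asymp\log X$, coming from pairs with $n_1n_2=\square$. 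The paper handles this as follows.
\begin{enumerate}
\item It writes $f=(\mu^2f)*\mathbf 1_{\square}$, so $C_X=M_X-D_X$ with $\E D_X(x)^2\le1$.
\item It controls $M_X$ by Parseval with the kernel $(\sigma^2+t^2)^{-1}$, $\sigma=1/\log X$. The singular range $|t|\le(\log\log X)^{-1/10}$ needs a real-axis barrier: $|F_{\exp(1/T)}(1/2+\sigma)|^2\le T^{1/3}$ for dyadic $T$, via Hoeffding.
\item This gives only a logarithmic mean-square bound. So the paper averages over a common shift $\theta$ of the grid $X^{8/7}e^{\theta+2\pi r}$. That yields $|C_X|\le A$ at most grid points while keeping the exact spacing $2\pi$ the covariance argument needs.
\end{enumerate}

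\textbf{The zeta factor.} It cannot be removed by subtracting a drift or by truncating frequencies. Conditionally on the small primes, your coherent low-frequency piece is not deterministic; it is a centred linear form in $(\epsilon_p)_{p>X}$. Its normalised conditional variance has expectation of order $\log X$, since $\E|F_X(1/2+\sigma+it)|^2\asymp\log X$ and $|Z_X(1+2\sigma+2it)|\asymp\min(\log X,1/|t|)$ for $|t|\le1$. So it is small only typically, not uniformly, and bounding its size would not make the remaining covariances small anyway. Nor can you simply restrict to $|t|\ge1/\log y_k$: the random variables are fixed, and for $|t|\le1$ above that cutoff one still has $|Z_X(1+2it)|\asymp1/|t|$. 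The weight in the covariance integral is therefore unbounded near $t=0$, and Harper's expectation-plus-Markov reductions no longer give small contributions there.

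The missing idea is to replace $U_X(x)$ by the increment $Y_x=U_X(4x)-U_X(x)$. Its coefficients $B_X(z)=\frac12\sum_{n\le4z,\,P^+(n)\le X}f(n)-\sum_{n\le z,\,P^+(n)\le X}f(n)$ have Mellin transform $(4^{s-1/2}-1)F_X(s)Z_X(2s)/s$. The zero of $4^{s-1/2}-1$ at $s=1/2$ cancels the pole. The resulting multiplier is $O(1)$ for $|t|\le1$, $O((\log\log\log X)^2)$ on the near-diagonal range, and $\asymp1$ on $1/3\le|t|\le1/2$. Consequently:
\begin{enumerate}
\item Multiplicative Chaos Result~4 still gives $\var_X(Y_x)\gg e^{-2.2W}(\log\log X)^{-1/2}$.
\item The $2k$-th moment argument loses only $(C\log\log\log X)^{8k}$.
\item A large maximal increment is converted into a large value of either sign of $U_X$ at some endpoint by conditional central symmetry. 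This is why $|C_X|\le A$ is needed at both $x$ and $4x$.
\end{enumerate}

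\textbf{The almost sure bookkeeping.} A chaos lower bound holding only with probability bounded below does not give an almost sure statement. You need failure probability $O(e^{-W/10})$, summable along scales with $W=20\log\ell$. The resulting loss $e^{1.2W}=\ell^{24}$ is what gets absorbed into $V$.
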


In particular, the positive limsup assertion implies 
$
 \limsup_{x\to\infty}{S_f(x)}/{\sqrt{x}}=+\infty,
$
answering Erd\H{o}s Problem~\#1144 affirmatively in the stronger form.

\begin{remark}
    We note that Hardy and Klurman \cite{HardyKlurman2026} independently obtained the same result on the large fluctuation of extended Rademacher RMF as in Theorem \ref{thm:main}.
\end{remark}

\begin{remark}
It is worth pointing out that our argument may work for a more general model than the extended Rademacher RMF considered here.  What is essentially used is the Euler product factorisation
\[
\sum_{\substack{n\geq1\\P^+(n)\leq X}}\frac{f(n)}{n^s}=F_X(s)Z_X(2s)R_X(s),
\]
where \(F_X(s)=\prod_{p\le X}(1+\varepsilon_p p^{-s})\) is the
squarefree Rademacher Euler product, $Z_X(s)=\prod_{p\leq X}(1-p^{-s})^{-1}$ and \(R_X\) is a ``well-behaved" 
factor (in our case $R_X(s)\equiv1$). 
Under some suitable moment and  regularity assumptions
on \(R_X\), the same argument  applies more generally. 
For instance, the \(m\)-free  extended 
Rademacher RMF has  factorisation
\[
 F_X(s)Z_X(2s)
 \prod_{p\le X}\left(1-\varepsilon_p^m p^{-ms}\right).
\]
For \(m\ge3\) the last product is regular on \(\Re s=1/2\), whereas for
\(m=2\) it cancels \(Z_X(2s)\) completely, giving the usual  Rademacher
RMF. Actually, by combining  \citet{Harper2023Large}'s original argument with our conditional symmetry
observation used in Section~\ref{sec:two-sided}, one can obtain the Rademacher RMF analogues
of Theorems~\ref{thm:main} and~\ref{thm:one-scale}. 

Another related family is obtained by taking \(f(p)\) uniformly among the
\(k\)-th roots of unity and extending completely multiplicatively. Its Euler product decomposition contains the deterministic factor
\(Z_X(ks)\).  Thus \(k=2\) is the critical case in which the associated
zeta singularity occurs at \(s=1/2\), while for \(k\ge3\) it lies to the
left of the critical line; as \(k\to\infty\),  the uniform distribution on the \(k\)-th roots of unity converges weakly to the uniform distribution on the unit circle, corresponding
to the Steinhaus RMF.  We do not pursue the details of these extensions here.
\end{remark}

\begin{remark}
    We also remark that our bound should be sharp up to $(\log \log x)^{o(1)}$. One might be able to prove the corresponding upper bound by combining some techniques here and arguments of \citet{Caich2023} with the recent developments \cite{DurkanPearceCrump2026,Verreault2026}. 

It is worth noting that, 
very recently, Atherfold, Gerspach and Hamdan
\cite{AtherfoldGerspachHamdan2026} announced their results on low moments of extended Rademacher RMFs.
They show that the typical size can behave very differently with the additional \textit{completely multiplicative} property, compared with the usual Rademacher case. While our result 
shows that its influence on large fluctuations is more limited.
\end{remark}

Following the general strategy of \citet{Harper2023Large}, we first prove
a conditional fluctuation estimate for
\(X^{8/7}\leq x\leq X^{4/3}\). For $X\geq3$ let
\[
 \mathscr F_X=\sigma(f(p):p\leq X),\qquad
 \p_X(\,\cdot\,)=\p(\,\cdot\mid\mathscr F_X).
\]
Thus $\mathscr F_X$ records the small prime variables, while conditional on
$\mathscr F_X$ the variables $(f(p))_{p>X}$ remain independent Rademacher
variables. Using the conditional Borel--Cantelli lemma (see Lemma \ref{lem:conditional-borel-cantelli} below),
Theorem~\ref{thm:main} follows from the following result.

\begin{theorem}
\label{thm:one-scale}
Let $f$ be an extended Rademacher RMF. There is an absolute constant $W_0>0$ such that  for all sufficiently large $X$ and all $W_0\leq W\leq 1/(30)\log\log\log X$, we have
\[
\p_X\bigg(
\max_{X^{8/7}\leq x\leq X^{4/3}}
\frac{+1}{\sqrt{x}}\sum_{n\leq x}f(n)
\geq
\frac{(\log\log X)^{1/4}}{e^{1.2W}}
\bigg)
\geq \frac13
\]
with probability  \( \ge1-O(e^{-W/10})\). The same result with $+1$ replaced by $-1$ holds.
\end{theorem}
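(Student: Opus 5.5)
We follow Harper's conditional Gaussian strategy, adapted to the zeta factor. For $X^{8/7}\le x\le X^{4/3}$, every $n\le x$ has at most one prime factor $p>X$, and that prime occurs to the first power because $p^2>X^2>x$. Hence
\[
S_f(x)=\sum_{\substack{n\le x\\ P^+(n)\le X}}f(n)+\sum_{X<p\le x}\epsilon_p\sum_{\substack{m\le x/p\\ P^+(m)\le X}}f(m)=:A_X(x)+L_X(x).
\]
Conditional on $\mathscr F_X$, the term $A_X(x)$ is deterministic. The term $L_X(x)$ is a Rademacher linear form in the independent variables $(\epsilon_p)_{p>X}$, with coefficients $S_X(x/p):=\sum_{m\le x/p,\,P^+(m)\le X}f(m)$.

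The first step handles the sign. The map $(\epsilon_p)_{p>X}\mapsto(-\epsilon_p)_{p>X}$ preserves $\p_X$ and sends $L_X$ to $-L_X$. So for each $x$ one of $\pm L_X(x)$ has the same sign as $A_X(x)$. I would choose a set of points $x_1<\dots<x_N$ and bound
\[
\p_X\Big(\max_i \pm S_f(x_i)\ge \lambda\sqrt{x_i}\Big)\ \ge\ \p_X\Big(\max_i \pm L_X(x_i)\ge\lambda\sqrt{x_i}\text{ and } \pm A_X(x_i)\ge0 \text{ at the maximiser}\Big).
\]
To avoid tracking signs of $A_X$, it is cleaner to prove that the Gaussian-approximated vector $(L_X(x_i)/\sqrt{x_i})_i$ satisfies $\max_i |L_X(x_i)|/\sqrt{x_i}\ge 2\lambda$ with conditional probability close to $1$, together with a bound $|A_X(x_i)|\le\lambda\sqrt{x_i}$ for most $i$. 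The bound on $A_X$ would come from a conditional second-moment or Rankin estimate: the smooth sum is small on average over $x$ with high $\mathscr F_X$-probability. Symmetry then gives probability at least $1/3$ for each sign.

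The second step is the Gaussian comparison. I would take points $x_i=X^{8/7}e^{i\delta}$ and apply a multivariate CLT, for instance Lindeberg with third moments, to the vector $L_X(x_i)/\sqrt{x_i}$. The conditional variances are
\[
\sigma_i^2=\frac{1}{x_i}\sum_{X<p\le x_i}S_X(x_i/p)^2\approx \frac{1}{\log X}\int_{X}^{x_i}\frac{S_X(x_i/t)^2}{t}\,dt .
\]
Via Parseval and Perron, this is controlled by $\int |F_X(1/2+it)Z_X(1+2it)|^2|1/2+it|^{-2}\,dt/\log X$ over a range of $t$. Two facts are needed. The first is that $\sigma_i^2$ is at least of order $(\log\log X)^{-1/2}\cdot(\text{typical Euler product mass})$ for many $i$; multiplicative chaos gives this typical size via the critical low-moment behaviour, and the factor $Z_X(1+2it)$ is harmless away from $t=0$. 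The second is that the conditional correlations between $x_i$ and $x_j$ are small when $|i-j|$ is large. The max of roughly $N\approx\log\log X\cdot e^{-O(W)}$ weakly correlated Gaussians with variance $\sigma^2$ is about $\sigma\sqrt{2\log N}$, which recovers the $(\log\log X)^{1/4}e^{-1.2W}$ threshold. Normal comparison (Slepian/Sudakov–Fernique, or Harper's lemma on many small covariances) then gives that the maximum exceeds this level with probability $1-o(1)$.

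The main obstacle is the conditional covariance control in the presence of the pole of $Z_X(2s)$ at $s=1/2$. The squares contribute a term of size about $\sqrt{x/p}$ to $S_X(x/p)$. This term is not random and is highly correlated across all $x_i$, so it can dominate both the variance and the covariances. My first attempt would be to subtract the square contribution explicitly: write $S_X(y)=\sqrt y\cdot c_X+\widetilde S_X(y)$ from the residue at $s=1/2$, where $c_X=\prod_{p\le X}(1+\epsilon_p p^{-1/2})\cdot(\text{const})$ up to smoothing. I would then check that $c_X$ is typically tiny, since the random Euler product at the critical point has $\log$ distributed roughly as $N(-\tfrac12\log\log X,\log\log X)$, and is below $e^{-W}$ outside an event of probability $O(e^{-W/10})$. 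This explains the exceptional $\mathscr F_X$-probability allowed in the statement. With the pole removed, the covariance of $\widetilde S$ reduces to Harper's integrals over $|t|\gg 1/\log X$, and I would follow his argument, checking that the $|Z_X(1+2it)|^2\approx \min(\log X,1/|t|)^{1}$ weights only change constants once $W\ll\log\log\log X$.
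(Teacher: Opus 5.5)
Your skeleton is the same as the paper's: split at $X$, compare the large-prime linear forms with a Gaussian vector, apply normal comparison, and use the sign flip of $(\epsilon_p)_{p>X}$. However, both steps where the extended model departs from Harper's setting have genuine gaps.

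\textbf{The zeta factor in the large-prime coefficients.} For finite $X$, $Z_X(2s)$ is holomorphic on $\Re s>0$, so there is no residue to subtract. You can create one by replacing $Z_X(2s)$ with $\zeta(2s)$, which is legitimate because $x/p<X$. But removing $F_X(1/2)\sqrt y$ only cancels the behaviour at $t=0$.

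On $1/\log X\lesssim|t|\lesssim1$, the Mellin transform on $\Re s=1/2$ is still essentially $(F_X(1/2+it)-F_X(1/2))/(it)$. Since $F_X(1/2)$ is typically tiny, the subtraction changes almost nothing. The weight is $|Z_X(1+2it)|^2\asymp\min(\log X,1/|t|)^2$ (squared, not to the first power). It is unbounded and has nothing to do with $W$. For instance, $\frac1{\log X}\int_{1/\log X}^1\E|F_X(1/2+it)|^2t^{-2}\,dt\asymp\log X$, against a target variance of order $(\log\log X)^{-1/2}$.

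So the expectation-based steps of Harper's argument do not go through. These are discarding the low frequencies before the high-moment argument, and bounding the tail in the variance truncation. The claim that "the weights only change constants" is false.

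The missing idea in the paper is to work with the increments $Y_x=U_X(4x)-U_X(x)$. Their coefficients $B_X$ have Mellin transform $(4^{s-1/2}-1)F_X(s)Z_X(2s)/s$. The multiplier $m_{X,\sigma}(t)$ is then
\begin{itemize}
\item $O(1)$ for $|t|\le 1$,
\item $O((\log\log\log X)^2)$ on the near-diagonal range of the high-moment argument,
\item $\asymp 1$ on $1/3\le|t|\le1/2$ (Lemma~\ref{lem:multiplier}).
\end{itemize}
Hence the low frequencies can be dropped by a plain expectation bound, while Multiplicative Chaos Result~4 still gives the variance lower bound. That lower bound, not the size of $F_X(1/2)$, is the source of the exceptional probability $e^{-W/10}$.

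\textbf{The $X$-smooth part.} No second-moment bound can give $|A_X(x)|\le\lambda\sqrt x$, because the squares force $\E C_X(x)^2\asymp\log X$. You need typical smallness. The paper obtains it in three steps:
\begin{itemize}
\item it writes $C_X=M_X-D_X$ using $f=(\mu^2f)*\mathbf 1_\square$;
\item it controls $D_X$ by orthogonality;
\item it controls the logarithmic mean square of $M_X$ by Parseval and Lemma~\ref{lem:weighted-squarefree-energy}.
\end{itemize}
The key input of that lemma is precisely your observation that $\log F_Y(1/2+\sigma)$ is roughly $N(-\tfrac12\log\log Y,\log\log Y)$. It is imposed as a barrier at every dyadic scale $Y=\exp(1/T)$ to tame the kernel $(\sigma^2+t^2)^{-1}$ near $t=0$. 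You attached this ingredient to the wrong step.

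Such a bound is only an average over $x$, so it does not control a fixed grid $X^{8/7}e^{i\delta}$. The paper instead averages over a common shift $\theta$ of an exactly log-spaced grid (Lemma~\ref{lem:shift-occupancy}).

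\textbf{Counting and correlations.} Reaching $(\log\log X)^{1/4}$ requires $\log N\asymp\log\log X$, that is, a power of $\log X$ many weakly correlated points; the paper gets $(\log X)^{0.19}$. The count $N\approx\log\log X$ is not enough. Also, Harper's method does not give decay of correlations in $|i-j|$. What it gives is sparsity of large covariances, from a $2k$-th moment with $k\asymp\log\log X/\log\log\log X$, followed by a greedy selection of points.
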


\begin{proof}[Proof of Theorem~\ref{thm:main} assuming
Theorem~\ref{thm:one-scale}]
We prove the positive assertion; the negative assertion follows in exactly
the same way.

Since $V(x)\to\infty$, we may choose a deterministic increasing sequence
$X_\ell\to\infty$ such that, for every sufficiently large $\ell$,
\[
X_\ell^{4/3}<X_{\ell+1},\quad
W_0\leq W=20\log\ell\leq \frac1{30}\log\log\log X_\ell,
\]
and
$
V(x)\geq \ell^{24}$ for every $x\geq X_\ell^{8/7}.
$

Let
\[
\mathcal E_\ell=
\bigg\{
\max_{X_\ell^{8/7}\leq x\leq X_\ell^{4/3}}
\frac1{\sqrt{x}}\sum_{n\leq x}f(n)
\geq
\frac{(\log\log X_\ell)^{1/4}}{\ell^{24}}
\bigg\}.
\]
By Theorem~\ref{thm:one-scale},  with $W=20\log\ell$, we have 
\[
\p\big(
\p(\mathcal E_\ell\mid\mathscr F_{X_\ell})<1/3
\big)
\ll e^{-2\log\ell}
=\ell^{-2},
\]
where the outer probability is taken over all realizations of the small primes \((f(p))_{p\leq X_\ell}\).
Since $\sum_{\ell\geq2}\ell^{-2}<\infty$, the first Borel--Cantelli lemma
shows that almost surely \(\p(\mathcal E_\ell\mid\mathscr F_{X_\ell})\geq 1/3
\) for every sufficiently large $\ell$.
The condition $X_\ell^{4/3}<X_{\ell+1}$ implies that
$\mathcal E_\ell\in\mathscr F_{X_{\ell+1}}$.
Thus,
\[
\sum_{\ell\geq2}
\p(\mathcal E_\ell\mid\mathscr F_{X_\ell})
=\infty
\quad\text{almost surely}.
\]

Applying Lemma~\ref{lem:conditional-borel-cantelli} to the filtration
$(\mathscr F_{X_{\ell+1}})_{\ell\geq2}$ shows that
$\mathcal E_\ell$ occurs infinitely often almost surely.  Every occurrence of $\mathcal E_\ell$ therefore provides a point satisfying \eqref{eq:main-theorem-positive},  this completes the proof.
\end{proof}

Note that here we use both the first and the conditional
Borel--Cantelli lemmas. This differs from Harper's deduction
\cite{Harper2023Large}, which uses only the first Borel--Cantelli lemma.
In both arguments, however, no independence assumption is required.
\subsection{An application to sign changes}

 We  recall some  known results on sign changes in the two Rademacher
RMFs.
For the Rademacher RMF,
\citet{AymoneHeapZhao2023} proved almost surely infinitely many sign changes
for the unweighted partial sums, and \citet{Aymone2024} later proved the
corresponding result for weighted sums
\(\sum_{n\le x} g(n)/\sqrt n\).
Quantitative almost sure lower bounds for the number of sign changes were subsequently
obtained by \citet{GeisHiary2025} and improved by
\citet{KlurmanLamzouriMunsch2024}; see also \citet{Aymone2025Average} for an
average lower bound.
For the extended Rademacher RMF, \citet{Aymone2024} proved almost
surely infinitely many sign changes for the unweighted partial sums, while
\citet{AngeloXu2024} settled the weighted case
\(\sum_{n\le x} f(n)/\sqrt n\), and also studied the probability that the partial sums remain nonnegative after conditioning on \(f(p)=1\) for all sufficiently small primes, answering a question {of \citet{Kucheriaviy2025}}. Note that, however, no quantitative almost sure lower bounds on the number of sign changes are currently known for the extended Rademacher RMF.
 Our Corollary~\ref{cor:sign-changes} provides the first such quantitative lower bound.
 
For $h\in\{f,g\}$, let $N_h(x)$ denote the number of
sign changes of the partial sums $\sum_{n\leq y}h(n)$ when  $y \in [1,x]$. 
The conditional form of Theorem~\ref{thm:one-scale} has a simple
consequence for sign changes. The point is that, on each of a sequence
of well-separated logarithmic scales, the theorem gives a fixed positive
conditional probability of producing a value with either choice of
sign. Choosing the sign opposite to the last nonzero partial sum on the
current scale then forces a new sign change.

\begin{corollary}\label{cor:sign-changes} Let $h\in\{f,g\}$. We have, almost surely for all sufficiently large $x$,
\[
 N_h(x)\gg\log\log x.
\]
\end{corollary}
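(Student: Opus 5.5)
We will deduce the corollary from Theorem~\ref{thm:one-scale} for $h=f$, and from its Rademacher analogue for $h=g$. The remark after Theorem~\ref{thm:main} notes that this analogue follows from Harper's argument together with the conditional symmetry of Section~\ref{sec:two-sided}. The strategy is to count well-separated scales on which a forced sign change occurs. Conditional Borel--Cantelli is too weak for this, since it only gives infinitely many occurrences. We will instead use a martingale strong law, in the form of L\'evy's extension of Borel--Cantelli: if $\mathcal E_\ell\in\mathscr G_{\ell+1}$ for a filtration $(\mathscr G_\ell)$, then almost surely
\[
\sum_{\ell\le L}\mathbf 1_{\mathcal E_\ell}\Big/\sum_{\ell\le L}\p(\mathcal E_\ell\mid\mathscr G_\ell)\to1
\]
on the event that the denominator diverges. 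This is the standard refinement of Lemma~\ref{lem:conditional-borel-cantelli}, and it also follows from Freedman's inequality.

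Choose scales $X_\ell=\exp(C^{\ell})$ with a large constant $C$, say $C=2$. Then $X_\ell^{4/3}<X_{\ell+1}$ and $\log\log X_\ell\asymp\ell$. The first difficulty is that Theorem~\ref{thm:one-scale} only holds with probability $1-O(e^{-W/10})$. Here we do not need the factor $(\log\log X)^{1/4}$; we only need a sign. So we fix $W=W_1$, a large constant, uniformly in $\ell$. Then for each $\ell$, with probability at least $1-\delta$ over $\mathscr F_{X_\ell}$, where $\delta=O(e^{-W_1/10})$ is small, both
\[
\p_{X_\ell}\Big(\max_{X_\ell^{8/7}\le x\le X_\ell^{4/3}} S_h(x)>0\Big)\ge\tfrac13
\quad\text{and}\quad
\p_{X_\ell}\Big(\min_{X_\ell^{8/7}\le x\le X_\ell^{4/3}} S_h(x)<0\Big)\ge\tfrac13
\]
hold. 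The indicators of these good events $G_\ell\in\mathscr F_{X_\ell}$ are not independent, so we cannot directly apply a law of large numbers. However, $\E\big[\sum_{\ell\le L}\mathbf 1_{G_\ell^c}\big]\le\delta L$. By Markov's inequality along $L=2^k$ and Borel--Cantelli, we would get only probability bounds and not an almost sure statement. I expect this to be the main obstacle. One way around it is to let $W$ grow slowly, taking $W=20\log\ell$ as in the proof of Theorem~\ref{thm:main}. Then $\sum_\ell\p(G_\ell^c)<\infty$, so almost surely all but finitely many $G_\ell$ hold. The constraint $W\le\frac1{30}\log\log\log X_\ell\asymp\log\ell$ then requires $\log\log X_\ell\ge\ell^{600}$ or so. To keep $\log\log x\ll$ the number of scales, we cannot afford this. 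The better option is to check that the proof of Theorem~\ref{thm:one-scale} allows a fixed $W\ge W_0$ with a failure probability that is summable in $X$ along a geometric sequence. Failing that, we use a larger probability estimate of the form $e^{-c\log\log X}$ for the sign-only event. This should hold because the bad events concern the small-prime Euler product, and only a weak lower bound on its size is needed.

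Granting that almost surely $G_\ell$ holds for all large $\ell$, define $\mathcal E_\ell$ as follows. Let $\sigma_\ell\in\{\pm1\}$ be the sign opposite to the last nonzero value of $S_h(y)$ for $y<X_\ell^{8/7}$. This sign is not $\mathscr F_{X_\ell}$-measurable, because values $y\in(X_\ell,X_\ell^{8/7})$ involve larger primes. So instead we let $\sigma_\ell$ be the sign of $S_h(X_\ell)$, or the last nonzero value at or before $X_\ell$, which is $\mathscr F_{X_\ell}$-measurable. Let $\mathcal E_\ell$ be the event that $\sigma_\ell S_h(x)>0$ for some $x\in[X_\ell^{8/7},X_\ell^{4/3}]$. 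Then $\mathcal E_\ell$ forces a sign change in $(X_\ell,X_\ell^{4/3}]$, and $\mathcal E_\ell\in\mathscr F_{X_{\ell+1}}$. On $G_\ell$ we have $\p(\mathcal E_\ell\mid\mathscr F_{X_\ell})\ge1/3$. The martingale strong law then gives $\#\{\ell\le L:\mathcal E_\ell\}\ge L/4$ for all large $L$ almost surely. The intervals $(X_\ell,X_\ell^{4/3}]$ are disjoint, so
\[
N_h(x)\ge\tfrac14\max\{L:X_L^{4/3}\le x\}-O(1)\gg\log\log x,
\]
which completes the argument.
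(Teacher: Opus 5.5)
\textbf{Gap.} The step you grant is the one that fails: that almost surely $G_\ell$ holds for all large $\ell$ when $X_\ell=\exp(2^\ell)$. Theorem~\ref{thm:one-scale} cannot give this.
\begin{itemize}
\item Its failure probability is $O(e^{-W/10})$, subject to $W\le\frac1{30}\log\log\log X_\ell\approx\frac1{30}\log\ell$. So the best available bound is $\p(G_\ell^c)\ll\ell^{-1/300}$, which is not summable. For fixed $W$ the bound is just a constant.
\item Your first repair, fixed $W$ with a summable failure probability, contradicts the form of the theorem. The good event in its proof excludes sets of probability $(\log\log X)^{-1/200}$ (Proposition~\ref{prop:old-prime-energy}) and $(\log\log X)^{-0.1}$ (Proposition~\ref{prop:weighted-covariance}). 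These are not summable along $\log\log X_\ell\asymp\ell$.
\item Your second repair, an $e^{-c\log\log X}$ bound for a sign-only event, would be a new theorem. The heuristic does not support it. The bad events are not only a weak \emph{lower} bound on an Euler product. They also include \emph{upper} bounds: smallness of the $X$-smooth part $C_X$ at most grid points, and sparsity of large covariances. These are obtained by Markov's inequality on low moments and barrier events, with polynomial losses in $\log\log X$.
\end{itemize}

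\textbf{Missing idea.} You never need every large $G_\ell$; it suffices that the failures have density $o(1)$ almost surely. Take $W_\ell=(\log\ell)/60$, which is admissible for $X_\ell=\exp(2^\ell)$ once $\ell$ is large. Then $\p(G_\ell^c)\ll\ell^{-1/600}$, so $\E\sum_{\ell\le L}\mathbf 1_{G_\ell^c}\ll L^{1-1/600}$. Markov's inequality gives $\p\bigl(\sum_{\ell\le L}\mathbf 1_{G_\ell^c}>\epsilon L\bigr)\ll_\epsilon L^{-1/600}$, which is summable along $L=2^m$. Monotonicity of the count in $L$ covers intermediate $L$. You dismissed Markov along $L=2^k$ only because you tried it with a fixed $\delta$. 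Combining it with slowly growing $W_\ell$ is exactly how the paper proceeds.

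\textbf{Completing your argument.} With the density bound, $\sum_{\ell\le L}\p(\mathcal E_\ell\mid\mathscr F_{X_\ell})\ge(\frac13-o(1))L$ almost surely. Your L\'evy ratio form of the conditional Borel--Cantelli lemma then gives $\#\{\ell\le L:\mathcal E_\ell\}\ge L/4$, and the rest of your proposal goes through. This ratio law is a legitimate substitute for the paper's route, which bounds the second moment of the martingale differences $D_\ell=\mathbf 1_{\mathcal E_\ell}-q_\ell$ and applies Borel--Cantelli along $L=2^m$.

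\textbf{Wording fix.} Define $\sigma_\ell$ as the sign \emph{opposite} to the last nonzero $S_h(y)$ with $y\le X_\ell$. As currently worded, $\mathcal E_\ell$ does not force a sign change.
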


For the Rademacher RMF this improves the previous almost sure lower
bound $\gg \log\log x/\log\log\log\log x$ of
\citet[Corollary~1.7]{KlurmanLamzouriMunsch2024}. 

\begin{proof}[Proof of Corollary \ref{cor:sign-changes} assuming Theorem \ref{thm:one-scale}]
Let $X_\ell=\exp(2^\ell)$ and $W_\ell=(\log\ell)/60$. For all
sufficiently large $\ell$, these parameters lie in the range of
Theorem~\ref{thm:one-scale}, and $X_\ell^{4/3}<X_{\ell+1}=X_{\ell}^2$.
{Let $\Omega_\ell$ be the $\mathscr F_{X_\ell}$-measurable event on which
the two conditional estimates in Theorem~\ref{thm:one-scale} hold
simultaneously. Then $\p(\Omega_\ell^c)\ll \ell^{-1/600}$ and, }
on $\Omega_\ell$, either
sign occurs somewhere in $[X_\ell^{8/7},X_\ell^{4/3}]$ with
conditional probability at least $1/3$.

Let $\mathcal E_\ell:=\{N_f(X_{\ell+1})-N_f(X_{\ell})\ge1\}$ be the event that $S_f(y)$ has a new sign change
between $X_\ell$ and $X_{\ell+1}$. The sign of the last nonzero value
of $S_f(y)$ with $y\leq X_\ell$ is $\mathscr F_{X_\ell}$-measurable.
We may therefore apply Theorem~\ref{thm:one-scale} with the opposite
sign, and hence, if
$q_\ell=\p(\mathcal E_\ell\mid\mathscr F_{X_\ell})$, then
$q_\ell\geq \frac13\mathbf 1_{\Omega_\ell}$.

Since $\mathcal E_\ell\in\mathscr F_{X_{\ell+1}}$, the variables
\(
 D_\ell
 :=
 \mathbf 1_{\mathcal E_\ell}-q_\ell
 =
 \mathbf 1_{\mathcal E_\ell}
 -
 \E(\mathbf 1_{\mathcal E_\ell}\mid\mathscr F_{X_\ell})
\)
form a martingale-difference sequence with respect to the filtration
$(\mathscr F_{X_{\ell+1}})_\ell$. Indeed,
$\E(D_\ell\mid\mathscr F_{X_\ell})=0$, and for $j<\ell$ the variable
$D_j$ is $\mathscr F_{X_\ell}$-measurable, whence
$\E(D_jD_\ell)=0$. Thus for any fixed sufficiently large $\ell_0$,
\[
 \E\bigg|
 \sum_{\ell_0\leq\ell\leq L}D_\ell
 \bigg|^2
 =
 \sum_{\ell_0\leq\ell\leq L}\E|D_\ell|^2
 \leq L.
\]
Moreover,
$\E\sum_{\ell_0\leq\ell\leq L}\mathbf 1_{\Omega_\ell^c} \ll L^{1-1/600}$. 
Since $q_\ell\geq\frac13\mathbf 1_{\Omega_\ell}$, outside the event
that $\Omega_\ell$ fails for a positive proportion of
$\ell\in[\ell_0,L]$, we have
$\sum_{\ell_0\leq\ell\leq L}q_\ell\gg L$. On the other hand, we have \( \sum_{\ell_0\leq\ell\leq L}\mathbf 1_{\mathcal E_\ell} \leq N_f(X_{L+1}).
\)
Thus, if $N_f(X_{L+1})\leq cL$ for some sufficiently small absolute constant $c>0$, then this forces either $\Omega_\ell$ fails for a
positive proportion of $\ell\in[\ell_0,L]$ or
$\left|\sum_{\ell_0\leq\ell\leq L}D_\ell\right|\gg L$. 
Hence
\[
\begin{aligned}
 \p\bigl(N_f(X_{L+1})< cL\bigr)
 &\leq
 \p\bigg(
   \sum_{\ell_0\leq\ell\leq L}\mathbf 1_{\Omega_\ell^c}\gg L
 \bigg)
 +
 \p\bigg(
   \bigg|\sum_{\ell_0\leq\ell\leq L}D_\ell\bigg|\gg L
 \bigg) \\
 &\ll
 \frac1L\,
 \E\sum_{\ell_0\leq\ell\leq L}\mathbf 1_{\Omega_\ell^c}
 +
 \frac1{L^2}\,
 \E\bigg|
   \sum_{\ell_0\leq\ell\leq L}D_\ell
 \bigg|^2 \\
 &\ll L^{-1/600}+L^{-1}.
\end{aligned}
\]

Taking $L=2^m$ and applying the first Borel--Cantelli lemma, we obtain
almost surely
$N_f(X_{2^m+1})\gg2^m$ for all sufficiently large $m$.
If $X_{2^m+1}\leq x<X_{2^{m+1}+1}$, then
$N_f(x)\gg 2^m \gg\log\log x$ for all sufficiently large $x$. 

Finally the Rademacher RMF case  follows in exactly the same way from the squarefree
analogue of Theorem~\ref{thm:one-scale} noted above.
\end{proof}

\begin{remark}
The proof of Corollary~\ref{cor:sign-changes} uses only the sign
information in Theorem~\ref{thm:one-scale}; the size of the fluctuation
plays no role. Moreover, even if two large values of opposite signs occur, the partial sums between them may change sign many
times, while our argument counts
only once. We believe there is a great room to gain more information by modifying our argument in the proof of
Theorem~\ref{thm:one-scale}, rather than using it only as a black box.
\end{remark}

\subsection{Outline of the proof of Theorem \ref{thm:one-scale}}
Fix a large integer $X$. Suppose that \(x<X^2\). Every integer \(n\leq x\) has at most one prime
factor exceeding \(X\), and using multiplicativity we can break up the sums as
\[
 \frac{1}{\sqrt x}\sum_{n\leq x}f(n)
 =
 \frac{1}{\sqrt x}
 \sum_{\substack{n\leq x\\P^+(n)\leq X}}f(n)
 +
 \frac{1}{\sqrt x}\sum_{X<p\leq x}f(p)
 \sum_{\substack{m\leq x/p\\P^+(m)\leq X}}f(m).
\]
After conditioning on \((f(p))_{p\leq X}\), the first term is fixed, whereas
the second is a centred linear form in the independent Rademacher variables
\((f(p))_{X<p\le x}\).

The first term is one of the points at which the extended Rademacher
RMF differs from the Rademacher RMF. The exact convolution identity
$
 f=(\mu^2f)*\mathbf 1_{\square}
$
gives, for \(x<X^2\),
\begin{equation}\label{eq:convolution-intro}
 \frac{1}{\sqrt x}
 \sum_{\substack{n\leq x\\P^+(n)\leq X}}f(n)
 =
 \sum_{\substack{a\leq x\\P^+(a)\leq X}}\frac{g(a)}{\sqrt a}
 -
 \frac{1}{\sqrt x}
 \sum_{\substack{a\leq x\\P^+(a)\leq X}}
 g(a)\left\{\sqrt{\frac xa}\right\}.
\end{equation}

The variables \(g(a)\), with \(a\) squarefree, form an orthonormal system,
so the second term on the right has second moment at most \(1\). The first
term is handled by Parseval's identity \eqref{eq:parseval-dirichlet}. With a horizontal shift
\(\sigma=1/\log X\), the relevant identity is
\[
\begin{aligned}
 &\int_1^\infty
 \Bigg|
 \sum_{\substack{a\leq x\\P^+(a)\leq X}}
 \frac{g(a)}{\sqrt a}
 \Bigg|^2
 \frac{\mathrm dx}{x^{1+2\sigma}} =
 \frac{1}{2\pi}\int_{\mathbb R}
 \frac{1}{\sigma^2+t^2}
 \left|
 \displaystyle\prod_{p\leq X}
 \left(1+\frac{f(p)}
 {p^{1/2+\sigma+it}}\right)
 \right|^2
 \,\mathrm dt .
\end{aligned}
\]
The multiplicative chaos estimates from
\cite{Harper2020Low,Harper2023Large} control the Euler product  on intervals
of length \(1\) in the frequency variable \(t\). The kernel is singular near \(t=0\), and
this range requires a separate argument. We factor the Euler product at primes up to \(\exp(1/|t|)\), compare its
value at \(t\) with its value at \(t=0\), and control the latter using a
standard large-deviation estimate for weighted Rademacher sums.
This gives
\[
 \mathbb P\Bigg(
 \frac{1}{\log X}
 \int_{X^{8/7}}^{X^{4/3}}
 \Bigg|
 \frac{1}{\sqrt x}
 \sum_{\substack{n\leq x\\P^+(n)\leq X}}f(n)
 \Bigg|^2\frac{\mathrm dx}{x}
 >
 (\log\log X)^{1/3}
 \Bigg)
 \ll(\log\log X)^{-1/200}.
\]
Thus no pointwise estimate for either term in
\eqref{eq:convolution-intro} is needed.

This is another point where the extended Rademacher RMF differs
from the Rademacher RMF.  In the latter, orthogonality gives
an easy second moment bound, showing that the \(X\)-smooth contribution is small at most points of the fixed logarithmic grid $X^{8/7}e^{2\pi r}$, as in \cite[Section~3.1]{Harper2023Large}.  Here the square contributions destroy this orthogonality, the corresponding easy second moment estimate is no longer available. We therefore turn to prove a logarithmic mean square estimate over \(X^{8/7}\le x\le X^{4/3}\).  To transfer this continuous integral estimate to a logarithmically
spaced grid, we introduce an additional  parameter \(\theta \in[0, 2\pi) \).

We sample at logarithmically spaced points
\[
 x=X^{8/7}e^{\theta+2\pi r},
 \qquad 0\leq\theta<2\pi,
\]
where \(r\) ranges over the integers for which \(4x\leq X^{4/3}\).
There are \(\asymp\log X\) such points.
Averaging over the common
shift \(\theta\), the preceding logarithmic mean square estimate shows that,
for all but a proportion
$
 O\!\left(e^{1.2W}(\log\log X)^{-1/12}\right)
$
of the shifts, all but the same proportion of the indices \(r\) satisfy
\[
 \Bigg|
 \frac{1}{\sqrt y}
 \sum_{\substack{n\leq y\\P^+(n)\leq X}}f(n)
 \Bigg|
 \leq {A}:=e^{-1.2W}(\log\log X)^{1/4}
\]
both at $y\in\{x,4x\}$. It is important here that the same shift $\theta$  is used for every $r$: this preserves the logarithmic spacing $\log x_{r,\theta}-\log x_{s,\theta}=2\pi(r-s)$
needed in the covariance argument.

We next consider the contribution from primes exceeding \(X\). Rather than
study its value at the point $x$, we take the difference between the
normalised values at \(x\) and \(4x\), which we also call the large prime increment at \(x\), namely,
\begin{align}\label{eq:large-part-decom}
 \frac{1}{\sqrt x}\sum_{X<p\leq4x}f(p)
 \Bigg(
 \frac12
 \sum_{\substack{m\leq4x/p\\P^+(m)\leq X}}f(m)
 -
 \sum_{\substack{m\leq x/p\\P^+(m)\leq X}}f(m)
 \Bigg).
\end{align}
Viewed as a function of \(x/p\), the expression in parentheses has
Mellin transform
\begin{equation}\label{eq:multiplier-intro}
 \frac{4^{s-1/2}-1}{s}
 \prod_{p\leq X}\left(1+\frac{f(p)}{p^s}\right)
 \prod_{p\leq X}\left(1-\frac{1}{p^{2s}}\right)^{-1}.
\end{equation}
The second product is the finite analogue of the factor \(\zeta(2s)\) in
\eqref{eq:euler-factorisation-intro}. Near \(s=1/2\) it has singular
growth as \(X\to\infty\), but \(4^{s-1/2}-1\) vanishes to first order there.
The product
\(
 \left(4^{s-1/2}-1\right)
 \prod_{p\leq X}\left(1-{p^{-2s}}\right)^{-1}
\)
is therefore  regular (uniformly bounded) near \(s=1/2\). On a fixed frequency
interval, for example \(1/3\leq |t|\leq1/2\), it is also bounded away from
zero. The factor $4^{s-1/2}-1$  thus removes the low frequency singularity while keeping the fixed frequency contribution needed for the lower bound.

\begin{remark}
The use of an auxiliary factor vanishing at a zeta type singularity is
classical in spirit. 
A closely related cancellation occurs in the work of Soundararajan \cite[Section~5.3]{Soundararajan2000}, where the factor
\(
 \bigl(1-2^{1-2s}\bigr)\zeta(2s)
\)
appears, with the zero at \(s=1/2\) canceling the pole of
\(\zeta(2s)\).  This is essentially the same as ours.
In the present setting, our canceling factor is not inserted ad hoc: it arises naturally in \eqref{eq:multiplier-intro} from \eqref{eq:large-part-decom}.
\end{remark}

Conditioning on the values \(f(p)\), \(p\leq X\), the large prime
part \eqref{eq:large-part-decom} has mean zero, and its conditional variance is given by the corresponding square sum over \(X<p\leq4x\).  
A short interval prime estimate bounds this square sum from below by a
weighted integral. Parseval's identity
\eqref{eq:parseval-dirichlet}, together with
\eqref{eq:multiplier-intro}, then turns this integral into an Euler product
integral on the Mellin line.
Restricting the \(t\)-integral to \(1/3\leq |t|\leq1/2\), the
multiplicative chaos lower bound from \cite{Harper2023Large} gives, with
probability \(1-O(e^{-0.1W})\),
\[
 \frac{1}{x}\sum_{X<p\leq4x}
 \Bigg(
 \frac12
 \sum_{\substack{m\leq4x/p\\P^+(m)\leq X}}f(m)
 -
 \sum_{\substack{m\leq x/p\\P^+(m)\leq X}}f(m)
 \Bigg)^2
 \gg
 \frac{e^{-2.2W}}{\sqrt{\log\log X}},
\]
simultaneously for all \(x_{r,\theta}\) in the shifted grid.

This variance estimate also explains the exponent \(1/4\). If one had
\((\log X)^{0.19}\) independent Gaussian variables with variances of the
preceding order, then their maximum would naturally have size
\[
 \sqrt{\frac{e^{-2.2W}}{\sqrt{\log\log X}}}\,
 \sqrt{\log\big((\log X)^{0.19}\big)}
 \asymp e^{-1.1W}(\log\log X)^{1/4}.
\]
This is larger than the target
\({A}=e^{-1.2W}(\log\log X)^{1/4}\), since \(W\ge W_0\) large enough. The main remaining
problem is therefore to find sufficiently many points whose conditional
covariances are small.

For this we prove a weighted version of Harper's high mixed moment
estimate. Outside an \(\mathscr F_X\)-measurable event of small
probability, uniformly for every common shift \(\theta\), each increment
has conditional covariance exceeding
\((\log\log X)^{-0.59}\) with at most \((\log X)^{0.8}\) of the other
increments.
Since all but a small proportion of the original \(\asymp\log X\) grid
points have small \(X\)-smooth contribution, a greedy selection among
these points leaves at least \((\log X)^{0.19}\) points with all pairwise
covariances below this threshold.
Dividing by the variance lower bound, the corresponding
correlations are at most
\[
 \frac{(\log\log X)^{-0.59}}
 {e^{-2.2W}(\log\log X)^{-1/2}}
 =
 e^{2.2W}(\log\log X)^{-0.09}=o(1).
\]

We briefly indicate the high moment argument. After Perron inversion, the
conditional covariance is represented by a double integral involving the
Rademacher Euler product in \eqref{eq:multiplier-intro}, a short prime sum,
and the deterministic multiplier \(4^{s-1/2}-1\).
Taking a \(2k\)-th moment, where
\(
 k=\big\lfloor
 \frac{\log\log X}{5000\log\log\log X}
 \big\rfloor,
\)
and expanding introduces frequencies \(t_1,\ldots,t_{2k}\). Summation over
the logarithmic grid produces the geometric sum
\[
 \sum_r
 e^{-2\pi i r\Xi}
 \ll
 \min\bigg(
 \log X,\,
 \frac{1}{
 \left\| \Xi
 \right\|_{\mathbb R/\mathbb Z}}
 \bigg),
\]
where $ \Xi:=t_1+\cdots+t_k-t_{k+1}-\cdots-t_{2k}$.
Away from the integers this already gives the required saving. On the near-integer range, we use the strong barrier condition and, as in
\cite[Sections~3.3--3.4]{Harper2023Large}, order the frequencies and decompose according to the sizes of the gaps between consecutive
frequencies. Roughly speaking, when many consecutive frequencies are close together, the barrier gives a large saving, while when many of the gaps are fairly large, the near-integer constraint gives the required
volume saving.

The main point to keep in mind when adapting Harper's argument is that every
Euler product integral carries the deterministic multiplier from
\eqref{eq:multiplier-intro}. On the relevant frequency range, this
multiplier is \(O((\log\log\log X)^2)\), so it contributes only a harmless
additional loss in the \(2k\)-th moment estimate. The resulting weighted
moment estimate gives the required sparsity of the large conditional
covariances.

Conditional on the primes up to \(X\), the selected increments are
linear forms in independent Rademacher variables. We use Harper's
multivariate normal approximation to compare their maximum with that of a
centred Gaussian vector having the same covariance matrix; the required
error terms are controlled by the fixed third- and fourth-moment estimates
from Lemma~\ref{lem:complete-moments}. After standardisation, the Gaussian
variables have uniformly small pairwise correlations. Harper's normal
comparison result, followed by the normal approximation, then shows that
the maximum of the selected increments exceeds
\(
 {4A}=4e^{-1.2W}(\log\log X)^{1/4}
\)
with conditional probability \(1-o(1)\).

Such an increment implies that the large prime contributions at the selected endpoints have a range of at least
\({4A}\). Conditional on the primes up to \(X\),
these contributions form a centrally symmetric vector: simultaneously
replacing \(f(p)\) by \(-f(p)\) for \(X<p\leq X^{4/3}\) changes the vector
to its negative. Hence, with conditional probability \(1-o(1)\), either
the maximum is at least
\({2A}\) or the minimum is at most
\({-2A}\). By symmetry, each of these two events
has conditional probability at least \(1/2-o(1)\).
By our choice of the selected points, the \(X\)-smooth contribution at
both corresponding endpoints has absolute value at most
\({A}\). Adding this contribution back therefore
gives a large value of either sign and completes the proof of
Theorem~\ref{thm:one-scale}.

\subsection{Organisation}The paper is organised as follows.
Section~\ref{sec:preliminaries} records the large prime decomposition,
the square divisor identity, and the probabilistic inputs.
Section~\ref{sec:old-primes} proves the logarithmic mean square estimate
for the \(X\)-smooth contribution.
Section~\ref{sec:variance} introduces the  multiplier and
establishes the lower bound for the conditional variance.
Section~\ref{sec:covariances} proves the weighted high moment estimate and
the sparsity of large conditional covariances.
Section~\ref{sec:fresh-primes} establishes the conditional Gaussian approximation, and Section~\ref{sec:common-shift} uses a common logarithmic shift to transfer the logarithmic mean square estimate to most points of the grid.
Finally, Section~\ref{sec:two-sided} combines these ingredients and
completes the proof of Theorem~\ref{thm:one-scale}.

\subsection*{Acknowledgement} The work was initiated during the Universality in Number Theory Program at CRM Montreal 2026. The authors thank the hospitality of CRM and the organizers. We thank Adam Harper for helpful discussions on this problem. 
H.G. would like to express sincere gratitude to Professors Andrew Granville and Jianya Liu for their guidance and support during his Ph.D. studies. He also thanks  financial support from the China Scholarship Council (CSC).

\section{Preliminaries}\label{sec:preliminaries}

Throughout the paper $p$ denotes a prime, $P^+(n)$ is the largest prime
factor of $n$, and $P^+(1)=1$.  Constants implicit in $O$- and
$\ll$-notation are absolute unless a dependence is displayed.  We use the
same probability space for
\[
 f(n)=\prod_{p^\nu\Vert n}\epsilon_p^\nu
 \quad\text{and}\quad
 g(n)=\mu^2(n)f(n).
\]
The function $g(n)$ is the usual Rademacher RMF
supported on squarefree integers. Since $\epsilon_p^2=1$, one has the exact
identity
\begin{equation}\label{eq:square-convolution}
 f(n)=\sum_{d^2\mid n}g(n/d^2).
\end{equation}
Exactly one summand on the right is non-zero.

For $X\geq3$, let $\mathscr F_X=\sigma(\epsilon_p:p\leq X)$. We write
$\p_X(\,\cdot\,)=\p(\,\cdot\mid\mathscr F_X)$ and
$\E_X(\,\cdot\,)=\E(\,\cdot\mid\mathscr F_X)$, and denote the
corresponding conditional variance and covariance by $\var_X(\,\cdot\,)$ and
$\cov_X(\,\cdot\,)$, respectively. We also write
\begin{equation}\label{eq:def-H-C}
 C_X(y)=\frac1{\sqrt y}
 \sum_{\substack{n\leq y\\P^+(n)\leq X}}f(n).
\end{equation}
An empty sum is understood when $y<1$.  The Rademacher Euler product will
be denoted by
\begin{equation}\label{eq:def-squarefree-product}
 F_X(s)=\prod_{p\leq X}(1+\epsilon_pp^{-s}).
\end{equation}
We also introduce the deterministic Euler product
\begin{equation}\label{eq:def-zeta-product}
 Z_X(s)=\prod_{p\leq X}(1-p^{-s})^{-1}.
\end{equation}
The Dirichlet series of $f$ restricted to the $X$-smooth integers factors
as
\begin{equation}\label{eq:finite-euler-factorisation}
 \sum_{\substack{n\geq1\\P^+(n)\leq X}}\frac{f(n)}{n^s}
 =
 \prod_{p\leq X}(1-\epsilon_pp^{-s})^{-1}
 =
 F_X(s)Z_X(2s),
 \qquad \mathrm{Re}\,s>0.
\end{equation}

For $1\leq x<X^2$, since every \(n<X^2\) has at most one prime factor exceeding \(X\), we have
\begin{equation}\label{eq:fresh-decomposition}
 \frac{S_f(x)}{\sqrt x}
 =
 C_X(x)+U_X(x),
 \qquad
 U_X(x)=
 \sum_{X<p\leq x}\frac{\epsilon_p}{\sqrt p}\,C_X(x/p).
\end{equation}
Conditional on $\mathscr F_X$, the coefficients in $U_X(x)$ are fixed,
whereas $(\epsilon_p)_{p>X}$ are independent Rademacher variables.

We shall compare the endpoints $x$ and $4x$. Accordingly, set
\begin{equation}\label{eq:def-B}
 B_X(x)=
 \frac12\sum_{\substack{n\leq4x\\P^+(n)\leq X}}f(n)
 -
 \sum_{\substack{n\leq x\\P^+(n)\leq X}}f(n).
\end{equation}
If $4x<X^2$, we define the large prime increment by
\begin{equation}\label{eq:fresh-increment}
 Y_x
 \coloneqq U_X(4x)-U_X(x)
 =
 \frac1{\sqrt x}\sum_{X<p\leq4x}\epsilon_pB_X(x/p).
\end{equation}

Only fixed moments are needed for the normal approximation. The following crude  bounds are sufficient for our purpose.

\begin{lemma}[Moments of the $X$-smooth partial sums]\label{lem:complete-moments}
Let $0\leq y<X$. For each $r\in\{2,3,4\}$ there is $A_r>0$ such that
\begin{equation}\label{eq:complete-moment-bound}
 \E\ \bigg|
 \sum_{\substack{n\leq y\\P^+(n)\leq X}}f(n)
 \bigg|^r
 \ll_r(1+y)^{r/2}\log^{A_r}(2+y).
\end{equation}
The same conclusion holds for $B_X(y)$ whenever $4y<X$.
\end{lemma}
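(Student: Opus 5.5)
Since $y<X$, every $n\le y$ is automatically $X$-smooth, so the smoothness condition is vacuous and the sum is just $S_f(y)$. The plan is to use the square-divisor identity \eqref{eq:square-convolution} to write
\[
\sum_{n\le y}f(n)=\sum_{d\le\sqrt y}\ \sum_{a\le y/d^2} g(a),
\]
and to bound the $L^r$ norm by Minkowski's inequality:
\[
\Big\|\sum_{n\le y}f(n)\Big\|_r\le\sum_{d\le\sqrt y}\Big\|\sum_{a\le y/d^2}g(a)\Big\|_r .
\]
The task therefore reduces to moment bounds for Rademacher RMF partial sums $G(z)=\sum_{a\le z}g(a)$.

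For these we aim for the crude estimate $\E|G(z)|^4\ll (1+z)^2\log^{3}(2+z)$. Expanding the fourth power, $\E g(a_1)g(a_2)g(a_3)g(a_4)$ equals $1$ exactly when $a_1a_2a_3a_4$ is a perfect square, with each $a_i$ squarefree, and $0$ otherwise. This is the standard hypercontractive/counting bound: the number of such quadruples with all $a_i\le z$ is $\ll z^2\log^3 z$. To see this, parametrise squarefree $a_i$ with square product as products of pairwise coprime blocks $b_{ij}$, $i<j$, indexed by the pairs, and count using divisor-type sums. Hölder's inequality (or Cauchy--Schwarz) then gives, for $r\in\{2,3,4\}$,
\[
\|G(z)\|_r\le\|G(z)\|_4\ll(1+z)^{1/2}\log^{3/4}(2+z).
\]
Inserting this into the sum over $d$, we get $\sum_d (y/d^2)^{1/2}\log^{3/4}\ll y^{1/2}\log^{7/4}(2+y)$, since $\sum_{d\le\sqrt y}1/d\ll\log y$. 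Raising to the $r$-th power gives \eqref{eq:complete-moment-bound}; the harmonic sum over $d$ is what costs the extra logarithm, and this is exactly why the statement allows $\log^{A_r}$.

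For $B_X(y)$ with $4y<X$, all $n\le 4y$ are again $X$-smooth, so $B_X(y)=\tfrac12 S_f(4y)-S_f(y)$. By Minkowski's inequality, $\|B_X(y)\|_r\le \tfrac12\|S_f(4y)\|_r+\|S_f(y)\|_r$, and the first part gives the same bound with $y$ replaced by $4y$, which only changes constants.

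The main obstacle is the fourth-moment count for $g$, that is, counting quadruples of squarefree integers up to $z$ whose product is a square. A plan for this step: write $a_1=b_{12}b_{13}b_{14}$, $a_2=b_{12}b_{23}b_{24}$, and so on, so that the count is at most $\sum\prod 1$ over six variables subject to four product constraints. Fixing $b_{12},b_{13},b_{23}$ and summing the remaining variables yields a bound of the shape $z^2$ times a power of $\log z$. This is standard (compare the known fact that $\E|G(z)|^4\asymp z^2\log z$), and any fixed log power suffices here.
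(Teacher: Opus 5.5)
Your proposal is correct and follows the paper's skeleton. You use \eqref{eq:square-convolution} to write the sum as $\sum_{d\le\sqrt y}G(y/d^2)$ with $G(z)=\sum_{a\le z}g(a)$, apply Minkowski over $d$ (which costs the harmonic factor $\log(2+y)$), bound the moments of $G$, and finish $B_X(y)$ by the triangle inequality.

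The one real difference is the input for $G$. The paper quotes the hypercontractive inequality $\E|G(z)|^{2k}\le\bigl(\sum_{m\le z}\mu^2(m)\tau_{2k-1}(m)\bigr)^k$ for $k=1,2$. This gives $r=2$ and $r=4$ directly, and $r=3$ follows by Cauchy--Schwarz. You instead count by hand the squarefree quadruples with square product, and deduce $r=2,3$ from $r=4$ by monotonicity of $L^r$ norms. Your route is self-contained; the paper's is shorter and gives every even moment at once.

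Two small corrections to your count, neither of which affects the conclusion.
\begin{itemize}
\item A prime may divide all four $a_i$, and pairwise coprime blocks $b_{ij}$ miss these primes. Add a seventh block $c$ with $a_i=c\prod_{j\ne i}b_{ij}$ and sum $\sum_{c\le z}(z/c)^2\log^{O(1)}(2+z)$. Alternatively, drop coprimality and use only that the map from tuples $(b_{ij})$ is onto: put such $p$ into both $b_{12}$ and $b_{34}$.
\item The Rademacher fourth moment is $\asymp z^2\log^2 z$ (Harper's $x^{q/2}(\log x)^{q(q-3)/2}$ at $q=4$); $z^2\log z$ is the Steinhaus value. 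Your target $z^2\log^3 z$ is still a valid upper bound.
\end{itemize}

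Your sketch also closes as stated. With $U_1U_2U_3=z^3/(b_{12}b_{13}b_{23})^2$, bound the number of $(b_{14},b_{24},b_{34})$ by $\min\bigl(U_1U_2U_3,\ \sum_{m\le z}\tau_3(m)\bigr)\ll\min\bigl(U_1U_2U_3,\ z\log^2 z\bigr)$. Splitting according to whether $b_{12}b_{13}b_{23}\le z$ gives $\ll z^2\log^4 z$, which is more than enough since any fixed power of $\log$ suffices.
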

\begin{proof}
In the stated range every integer not exceeding $y$ is automatically
$X$-smooth. Using \eqref{eq:square-convolution}, we therefore have
\[
 \sum_{\substack{n\leq y\\P^+(n)\leq X}}f(n)
 =
 \sum_{d\leq\sqrt y}\sum_{m\leq y/d^2}g(m).
\]
For a random variable $Y$ and $q\geq1$, write $ \|Y\|_q=(\E|Y|^q)^{1/q}.$
By the hypercontractive inequality for Rademacher RMFs (see, for example, \cite[Probability Result~2.3]{Harper2019High}),
for \(k\in\{1,2\}\),
\[
 \E\bigg|\sum_{m\leq z}g(m)\bigg|^{2k}
 \leq
 \bigg(
 \sum_{m\leq z}\mu^2(m)\tau_{2k-1}(m)
 \bigg)^k,
\]
where $\tau_j$ is the $j$-fold divisor function.  
It follows that, by the standard upper bound  $ \sum_{m\leq z}\tau_{k}(m)
 \ll_k z\log^{k-1}(2+z)$,
\[
 \bigg\|\sum_{m\leq z}g(m)\bigg\|_{2k}
 \ll_k\sqrt z\log^{k-1}(2+z)
 \qquad(k=1,2),
\]
Minkowski's inequality now gives us
\begin{align*}
 \bigg\|
 \sum_{\substack{n\leq y\\P^+(n)\leq X}}f(n)
 \bigg\|_{2k}
 &\leq
 \sum_{d\leq\sqrt y}
 \bigg\|\sum_{m\leq y/d^2}g(m)\bigg\|_{2k} 
 \ll_k
 \sqrt y\log^{k-1}(2+y)
 \sum_{d\leq\sqrt y}\frac1d 
 \ll_k
 \sqrt y\log^k(2+y).
\end{align*}
This proves the cases $r=2$ and $r=4$, while the case $r=3$ follows from Cauchy--Schwarz,
\[
 \E\ \bigg|
 \sum_{\substack{n\leq y\\P^+(n)\leq X}}f(n)
 \bigg|^3
 \leq
 \bigg(
 \E\bigg|
 \sum_{\substack{n\leq y\\P^+(n)\leq X}}f(n)
 \bigg|^2
 \bigg)^{1/2}
 \bigg(
 \E\bigg|
 \sum_{\substack{n\leq y\\P^+(n)\leq X}}f(n)
 \bigg|^4
 \bigg)^{1/2}.
\]
If $4y<X$, the same bounds apply to the two partial sums in
\eqref{eq:def-B}; the assertion for $B_X(y)$ follows from the triangle
inequality.
\end{proof}

We shall repeatedly use the following form of Parseval's identity.  If
$A(s)=\sum_{n\geq1}a_nn^{-s}$ converges absolutely for $\mathrm{Re}\,s>c$, then, whenever both sides converge,
\begin{equation}\label{eq:parseval-dirichlet}
 \int_0^\infty\Big|\sum_{n\leq x}a_n\Big|^2x^{-1-2\sigma}\,\mathrm{d}x
 =\frac1{2\pi}\int_\R
   \left|\frac{A(\sigma+it)}{\sigma+it}\right|^2\,\mathrm{d}t.
\end{equation}
This is Parseval's identity for Dirichlet series; see
\cite[Section~5.1, especially (5.26)]{MontgomeryVaughan2007}.
All applications below are made after a positive horizontal shift, where
the required convergence is immediate.

We shall also use the following L\'evy's extension of the second
Borel--Cantelli lemma; see \cite[Theorem~4.3.4]{Durrett2019}.

\begin{lemma}[Conditional Borel--Cantelli lemma]
\label{lem:conditional-borel-cantelli}
Let $(\mathscr F_\ell)_{\ell\geq0}$ be a filtration, and let
$\mathcal E_\ell\in\mathscr F_\ell$ for every $\ell\geq1$. Then, almost surely the following events are the same
\[
 \{\mathcal E_\ell\ \text{infinitely often}\}
 =
 \bigg\{
 \sum_{\ell\geq1}
 \p(\mathcal E_\ell\mid\mathscr F_{\ell-1})
 =\infty
 \bigg\}.
\]
\end{lemma}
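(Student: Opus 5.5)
The plan is to reduce the lemma to the standard dichotomy for martingales with bounded increments. Write $p_\ell=\p(\mathcal E_\ell\mid\mathscr F_{\ell-1})$ and set
\[
 M_n=\sum_{\ell=1}^n\big(\mathbf 1_{\mathcal E_\ell}-p_\ell\big),\qquad M_0=0.
\]
Since $\mathcal E_\ell\in\mathscr F_\ell$ and $p_\ell$ is $\mathscr F_{\ell-1}$-measurable, $(M_n)$ is a martingale with respect to $(\mathscr F_n)$. Its increments satisfy $|M_n-M_{n-1}|\leq1$.

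\textbf{Step 1 (dichotomy).} We first show that almost surely either $M_n$ converges to a finite limit, or $\limsup M_n=+\infty$ and $\liminf M_n=-\infty$. Fix $K>0$ and let $N=\inf\{n: M_n\leq -K\}$. Because the increments are bounded by $1$, the stopped process satisfies $M_{n\wedge N}\geq -K-1$. Hence $M_{n\wedge N}+K+1$ is a nonnegative martingale, and by the martingale convergence theorem it converges almost surely. On the event $\{\inf_n M_n>-K\}$ we have $N=\infty$, so $M_n$ itself converges there. Taking the union over $K\in\N$ shows that, up to a null set, $\{\inf_n M_n>-\infty\}\subseteq\{M_n\text{ converges}\}$. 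Applying the same argument to $-M_n$ gives $\{\sup_n M_n<\infty\}\subseteq\{M_n\text{ converges}\}$. Together these give the dichotomy.

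\textbf{Step 2 (comparison of the two series).} On the convergence event, $\sum_{\ell\le n}\mathbf 1_{\mathcal E_\ell}$ and $\sum_{\ell\le n}p_\ell$ differ by a bounded quantity. So one series diverges if and only if the other does.

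On the oscillation event, $\limsup M_n=+\infty$ forces $\sum_{\ell\le n}\mathbf 1_{\mathcal E_\ell}\ge M_n$ to be unbounded. Likewise $\liminf M_n=-\infty$ forces $\sum_{\ell\le n}p_\ell\geq -M_n$ to be unbounded. Thus both series diverge on this event.

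In either case, almost surely
\[
 \sum_\ell\mathbf 1_{\mathcal E_\ell}=\infty\iff\sum_\ell p_\ell=\infty,
\]
and the left-hand side is exactly the event that $\mathcal E_\ell$ occurs infinitely often.

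The only genuinely nontrivial input is Step 1, specifically the stopping-time argument giving convergence on $\{\inf M_n>-\infty\}$. It relies on the martingale convergence theorem for martingales bounded below, which I take as standard (cf.\ \cite[Theorem~4.3.4]{Durrett2019}). The rest is bookkeeping.
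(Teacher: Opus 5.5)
Your proof is correct, and it is the standard argument behind the reference the paper relies on; the paper gives no proof of its own beyond citing \cite[Theorem~4.3.4]{Durrett2019}. You prove the bounded-increment dichotomy for $M_n=\sum_{\ell\le n}\bigl(\mathbf 1_{\mathcal E_\ell}-\p(\mathcal E_\ell\mid\mathscr F_{\ell-1})\bigr)$ by stopping at the first passage below $-K$, and then compare the two series on the convergence and oscillation events. One small citation slip: that theorem number is the lemma itself, so it should not be cited as your input; what you actually use is the almost sure convergence theorem for nonnegative martingales.
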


The multiplicative chaos estimates used below are those proved in
\cite[Sections~2.3--2.4]{Harper2023Large}. We use in particular
Multiplicative Chaos Result~3 for low-moment upper bounds,
Multiplicative Chaos Result~4 for a lower bound on a fixed frequency interval, and the
barrier estimates from Sections~3.3--3.4. All applications are made to
the squarefree Euler product \(F_X\) or to its prime truncations; we do
not apply these results directly to the Euler product \(F_X(s)Z_X(2s)\)
of the extended Rademacher RMF. The relevant parameters are specified at each
application.

We also use Normal Approximation Result~1 and Normal Comparison
Result~1 of \cite{Harper2023Large}. The former is used in
Section~\ref{sec:fresh-primes} to replace the large prime increments by a Gaussian vector with the same covariance matrix, while
the latter is used in the proof of Theorem~\ref{thm:one-scale} to control
the maximum of the resulting weakly correlated Gaussian variables.

\section{The \texorpdfstring{$X$}{X}-smooth contribution}\label{sec:old-primes}

The first issue peculiar to the extended Rademacher RMF is that,
after conditioning on $(f(p))_{p\leq X}$, there remains a non-centred,
$\mathscr F_X$-measurable sum supported on $X$-smooth integers. The identity
$f=(\mu^2f)*\mathbf 1_{\square}$ isolates this term in a useful form.
Throughout this section, $g$ and $C_X$ have the meanings fixed in
\eqref{eq:square-convolution} and \eqref{eq:def-H-C}.
Only the values $X^{8/7}\leq x\leq X^{4/3}$ will be needed. 

For $1\leq x<X^2$, every $d\leq\sqrt x$ is automatically $X$-smooth.
Hence \eqref{eq:square-convolution} gives
\[
 C_X(x)=\frac1{\sqrt x}
 \sum_{\substack{a\leq x\\P^+(a)\leq X}}g(a)
 \left\lfloor\sqrt{\frac xa}\right\rfloor.
\]
Writing the integer part as its argument minus its fractional part, we
obtain
\[
 C_X(x)=M_X(x)-D_X(x),
\]
where
\[
 M_X(x)=\sum_{\substack{a\leq x\\P^+(a)\leq X}}\frac{g(a)}{\sqrt a},
 \qquad
 D_X(x)=\frac{1}{\sqrt x}
 \sum_{\substack{a\leq x\\P^+(a)\leq X}}g(a)
 \left\{\sqrt{\frac xa}\right\}.
\]
The random variables $g(a)$, with $a$ squarefree, form an orthonormal
system. Hence
\[
 \E D_X(x)^2=\frac1x
 \sum_{\substack{a\leq x\\P^+(a)\leq X}}
 \left\{\sqrt{\frac xa}\right\}^{\!2}
 \leq\frac1x\sum_{a\leq x}1\leq1.
\]

We next prove the estimate used to control the $X$-smooth contribution at
most endpoints. Recall the Rademacher Euler product $F_X$ from
\eqref{eq:def-squarefree-product}; equivalently,
$F_X(s)=\sum_{P^+(n)\leq X}g(n)n^{-s}$.
The multiplicative chaos input is the following low-moment
bound.  It is Multiplicative Chaos Result~3 of
\cite{Harper2023Large}; see also \cite[Key Proposition~3 and the discussion
following it]{Harper2020Low}, and the formulation in
\cite[Proposition~2.9]{Atherfold2025}.  For $q=2/3$,
$\sigma=1/\log X$, and $|N|\leq(\log X)^{1000}$, it gives
\begin{equation}\label{eq:mc3-old}
 \E\left(\int_{N-1/2}^{N+1/2}
 \left|F_X\left(\frac12+\sigma+it\right)\right|^2\,\mathrm{d}t
 \right)^q
 \ll \bigl(\log\log(|N|+10)\bigr)^q
 \left(\frac{\log X}{\sqrt{\log\log X}}\right)^q.
\end{equation}

{The weighted integral below is closely related  to the methods used by
Gerspach \cite{Gerspach2022} in his study of low pseudomoments, and by Hardy \cite{Hardy2024Weighted} for weighted Steinhaus RMFs, and, more directly in the Rademacher setting, by Atherfold  \cite{Atherfold2025}.} A related weighted Euler product integral also appears in the recent work of Harper--Soundararajan--Xu \cite[Section~2]{HarperSoundararajanXu2026},
where a short interval Fourier transform also produces a decay factor
there.  In the present setting, however, the Parseval kernel
\((\sigma^2+t^2)^{-1}\) becomes singular at \(t=0\) as
\(\sigma=1/\log X\to0\), so the main additional point is to control
the contribution of the very small frequencies.

\begin{lemma}[A weighted mean square bound for the Rademacher Euler product]
\label{lem:weighted-squarefree-energy}
Let $\sigma=1/\log X$.  Then, for all sufficiently large
$X$,
\begin{equation}\label{eq:weighted-energy-prob}
 \p\left(
 \frac1{\log X}\int_{-\infty}^{\infty}
 \frac{|F_X(1/2+\sigma+it)|^2}{\sigma^2+t^2}\,\mathrm{d}t
 \gg (\log\log X)^{-1/200}\right)\ll (\log\log X)^{-1/200}.
\end{equation}
\end{lemma}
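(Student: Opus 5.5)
The plan is to split the frequency integral into a large-frequency range $|t|\geq 1$ and a small-frequency range $|t|<1$. Each range will be bounded in probability by comparing the weighted integral with $\log X$ after the normalisation in \eqref{eq:weighted-energy-prob}. Throughout I work with the squarefree Euler product $F_X$ only, as the paper announced. I use Markov's inequality applied to $q$-th moments with $q=2/3$, or pointwise large-deviation bounds, rather than second moments. The reason is that $\E|F_X(1/2+\sigma+it)|^2\asymp\log X$, whereas the typical size is smaller by the multiplicative chaos factor $\sqrt{\log\log X}$, and only this saving can produce a negative power of $\log\log X$.

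\emph{Large frequencies.} For $|t|\geq 1$, cover $\R\setminus(-1,1)$ by the unit intervals $[N-1/2,N+1/2]$ with $|N|\geq 1$. On each such interval the kernel is $\ll N^{-2}$. Using subadditivity of $u\mapsto u^{q}$, I bound
\[
\E\bigg(\int_{|t|\geq1}\frac{|F_X|^2}{\sigma^2+t^2}\,\mathrm dt\bigg)^{q}\leq\sum_{N}N^{-2q}\,\E\bigg(\int_{N-1/2}^{N+1/2}|F_X|^2\,\mathrm dt\bigg)^{q}.
\]
For $|N|\leq(\log X)^{1000}$, each term is handled by \eqref{eq:mc3-old}, and $\sum N^{-4/3}(\log\log N)^{2/3}$ converges. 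For the remaining tail of $N$, the trivial bound $\E\int|F_X|^2\ll\log X$ per unit interval together with the weight $N^{-2}$ suffices. This gives a $q$-th moment bound of $\ll(\log X/\sqrt{\log\log X})^{q}$. Markov's inequality then shows that this range contributes at most $(\log\log X)^{-1/2+1/100}\log X$, except on an event of probability $\ll(\log\log X)^{-c}$.

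\emph{Small frequencies.} For $|t|<1$, decompose dyadically, $|t|\asymp 2^{-j}$ with $0\leq j\leq\log\log X+O(1)$, and put the whole range $|t|\leq\sigma$ into the last block. On a block I factor $F_X=F_{y}\cdot F_{(y,X]}$ with $y=\exp(2^{j})$, so that $|t|\log p\ll1$ for $p\leq y$. On the block, $\log|F_y(1/2+\sigma+it)|-\log|F_y(1/2+\sigma)|$ is then a Rademacher sum with small variance, and the primes in $(y,X]$ behave like an independent chaos at scale $\log X/\log y$. For blocks with $2^j\leq(\log X)^{1-\delta}$, I will apply the $q$-moment method, using independence of the two factors and the low-moment bound on the tail product in the spirit of \eqref{eq:mc3-old} rescaled to intervals of length $2^{-j}$. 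For the blocks closest to $t=0$, where the weight $(\sigma^2+t^2)^{-1}\asymp(\log X)^2$ and the block has length $\asymp\sigma$, I instead use a pointwise bound. Write
\[
\log|F_X(1/2+\sigma)|^2=2\sum_{p\leq X}\epsilon_p p^{-1/2-\sigma}-\sum_{p\leq X}p^{-1-2\sigma}+O(1).
\]
The Rademacher sum here has variance $\sim\log\log X$, and the standard subgaussian tail (Hoeffding) gives $|F_X(1/2+\sigma)|^2\leq(\log X)^{-1+o(1)}\cdot e^{2G}$ with $G\ll\sqrt{\log\log X\cdot\log\log\log X}$ outside a tiny event. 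So these blocks contribute $\ll(\log X)^{o(1)}$ times $\log X\cdot(\log X)^{-1}$, which is negligible after dividing by $\log X$. I will transfer from $t=0$ to $|t|\ll\sigma$ by controlling the Lipschitz variation of the Rademacher sum in $t$ on that short range, for example via a chaining or union bound over a $\sigma/(\log X)$-net.

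\emph{Main obstacle.} The hard part is the intermediate blocks, where $2^{-j}$ is neither of unit size nor comparable to $\sigma$. A naive $q$-moment computation there loses a factor $(\log y)^{q^2}$ from $\E|F_y|^{2q}$, which does not telescope. I would first try to replace the moment bound on $|F_y(1/2+\sigma)|$ by the same large-deviation bound as near $t=0$: with high probability, simultaneously in $j$, one has $|F_y(1/2+\sigma)|^2\leq(\log y)^{-1}\exp\big(O(\sqrt{\log\log y\,\log\log\log X})\big)$. Conditionally on this, I would use the $q$-moment bound only for the independent product over $(y,X]$. Summing the resulting geometric-type series in $j$, and union-bounding the exceptional events, should give a total contribution $\ll(\log\log X)^{-1/200}\log X$ with the stated probability.
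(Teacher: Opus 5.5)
Your treatment of $|t|\geq1$ is fine and matches the paper. The gap is in the intermediate small-frequency blocks, at exactly the step you flag as the main obstacle.

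\textbf{Why your barrier fails.} Write $u=\log\log y=\log(1/|t|)$. In your factorisation the ratio $F_y(1/2+\sigma+it)/F_y(1/2+\sigma)$ has bounded second moment, and the tail $\prod_{y<p\leq X}$ has second moment $\ll\log X/\log y$. So after normalising by $\log X$, each dyadic block $|t|\asymp e^{-u}$ contributes essentially whatever pointwise bound you impose on $|F_y(1/2+\sigma)|^2$.

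You propose $e^{-u}\exp(C\sqrt{u\log\log\log X})$, holding simultaneously for all $\asymp\log\log X$ values of $j$. Write $G=\sum_{p\leq y}\epsilon_pp^{-1/2-\sigma}$, with $\var G=u+O(1)$. Hoeffding plus a union bound needs $(\log\log X)^{1-C^2/8}\to0$, i.e. $C^2>8$.

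But then $\sum_j e^{-u_j+C\sqrt{u_j\log\log\log X}}$ is not a geometric-type series. Its terms peak at $u\asymp C^2\log\log\log X$, with size $(\log\log X)^{C^2/4}>(\log\log X)^2$. Replacing the tail's second moment by a rescaled low-moment chaos bound gains at most $(\log\log X)^{-1/2}$, which does not compensate; no such rescaled bound is among the available inputs anyway. So the scales $|t|\approx(\log\log X)^{-c}$ are not controlled, and the claimed total $\ll(\log\log X)^{-1/200}\log X$ does not follow.

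There is also a smaller error: you cannot work ``conditionally on'' a barrier that is simultaneous in $j$ and keep independence from $\prod_{y<p\leq X}$, since the barriers for larger $j$ involve those primes. You can only use the barrier as a pointwise majorant and then drop the indicator.

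\textbf{The missing idea.} Use a barrier \emph{linear} in $u$, started at an $X$-dependent scale.

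\begin{itemize}
\item \emph{Barrier and its failure probability.} The paper fixes $T_0=(\log\log X)^{-1/10}$ and imposes $|F_{\exp(1/T)}(1/2+\sigma)|^2\leq T^{1/3}$ for all dyadic $\sigma\leq T\leq T_0$. The typical size is $T^{1+o(1)}$, so Hoeffding gives failure probability $\ll T^{1/19}$. These sum to $T_0^{1/19}\ll(\log\log X)^{-1/200}$.
\item \emph{Blocks on the barrier event.} Each block has expectation $\ll T^{1/3}$. This uses only the exact single-prime identity giving $\E|\mathrm{ratio}|^2\ll1$ and the tail second moment $\ll T\log X$. The blocks sum to $T_0^{1/3}$, and Markov finishes.
\item \emph{The range $T_0<|t|\leq1$.} This is handled like your large frequencies. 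The kernel loss $T_0^{-2q}=(\log\log X)^{2/15}$ is beaten by the chaos saving $(\log\log X)^{-1/3}$ in the $q=2/3$ moment.
\end{itemize}

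In this scheme the block $|t|\leq\sigma$ is just the case $Y=X$. No chaining or net argument and no rescaled chaos estimate are needed. Your square-root barrier might perhaps be rescued with a maximal inequality in $j$ (to avoid the union-bound cost) together with a uniform rescaled low-moment bound for the tail. The linear barrier makes both unnecessary.
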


\begin{proof}
Set $T_0=(\log\log X)^{-1/10}$.  We divide the $t$-integral in
\eqref{eq:weighted-energy-prob} into $|t|\leq T_0$, $T_0<|t|\leq1$,
$1<|t|\leq(\log X)^{900}$, and the remaining tail.

We begin with the range containing the singularity. If
$\sigma\leq T\leq T_0$ is a dyadic multiple of $\sigma$, put
$Y=\exp(1/T)$.
For $r=p^{-1/2-\sigma}$ and real $\phi$, direct averaging over a Rademacher
variable $\epsilon$ gives the exact identity
\begin{equation}\label{eq:single-prime-ratio}
 \E_{\epsilon}\left|
 \frac{1+\epsilon r e^{-i\phi}}{1+\epsilon r}
 \right|^2
 =1+\frac{4r^2(1-\cos\phi)}{(1-r^2)^2}.
\end{equation}
Taking the product of \eqref{eq:single-prime-ratio} over $p\leq Y$ and
using independence, we obtain
\[
 \begin{aligned}
 &\E\prod_{p\leq Y}\left|
 \frac{1+f(p)p^{-1/2-\sigma-it}}
 {1+f(p)p^{-1/2-\sigma}}
 \right|^2=
 \prod_{p\leq Y}
 \left(
 1+
 \frac{4p^{-1-2\sigma}(1-\cos(t\log p))}
 {(1-p^{-1-2\sigma})^2}
 \right).
 \end{aligned}
\]
Since $(1-p^{-1-2\sigma})^{-2}\ll1$ uniformly in $p$, taking logarithms
and using $\log(1+v)\leq v$ and $1-\cos v\ll v^2$, we have
\[
 \begin{aligned}
 \log\E\prod_{p\leq Y}\left|
 \frac{1+f(p)p^{-1/2-\sigma-it}}
 {1+f(p)p^{-1/2-\sigma}}
 \right|^2
 &\ll
 t^2\sum_{p\leq Y}\frac{(\log p)^2}{p}\ll t^2(\log Y)^2.
 \end{aligned}
\]
For $|t|\leq2T$ and $\log Y=1/T$, the last expression is $O(1)$.
Consequently,
\begin{equation}\label{eq:short-ratio}
 \E\prod_{p\leq Y}\left|
 \frac{1+f(p)p^{-1/2-\sigma-it}}
 {1+f(p)p^{-1/2-\sigma}}
 \right|^2
 \ll1
\end{equation}
uniformly for $|t|\leq2T$.
Independence and Mertens' estimate also
give
\begin{equation}\label{eq:long-product-second-moment}
 \E\prod_{Y<p\leq X}|1+f(p)p^{-1/2-\sigma-it}|^2
 =\prod_{Y<p\leq X}(1+p^{-1-2\sigma})
 \ll\frac{\log X}{\log Y}=T\log X.
\end{equation}

It remains to control the real-axis factor that was divided out in
\eqref{eq:short-ratio}.  Set $u=\log(1/T)$.  Since
$\sigma\log Y\leq1$, Mertens' estimate and partial summation show uniformly
that
\begin{equation}\label{eq:variance-real-axis}
 \sum_{p\leq Y}p^{-1-2\sigma}=u+O(1).
\end{equation}
Taylor expansion, with an absolutely summable remainder, yields
\begin{equation}\label{eq:real-axis-expansion}
 \log|F_Y(1/2+\sigma)|^2
 =2\sum_{p\leq Y}\frac{f(p)}{p^{1/2+\sigma}}
 -\sum_{p\leq Y}\frac1{p^{1+2\sigma}}+O(1).
\end{equation}
The event \(
 |F_Y(1/2+\sigma)|^2>T^{1/3}=e^{-u/3}
\)
therefore forces
\[
 \sum_{p\leq Y}\frac{f(p)}{p^{1/2+\sigma}}
 >\frac{u}{3}-O(1).
\]
Hoeffding's inequality consequently gives
 \begin{align}\label{eq:real-axis-tail}
 \p\left(|F_Y(1/2+\sigma)|^2>T^{1/3}\right)
 &\leq
 \exp\left(
 -\frac{(u/3-O(1))^2}{2(u+O(1))}
 \right)\ll T^{1/19}
 \end{align}

Let $\Omega_X^{(0)}$ be the barrier event that
$|F_{\exp(1/T)}(1/2+\sigma)|^2\leq T^{1/3}$ for every
$T=2^j\sigma\leq T_0$, where $j\geq0$. These dyadic intervals, together with
$|t|\leq\sigma$, cover $|t|\leq T_0$.  Summing
\eqref{eq:real-axis-tail} over the geometric sequence gives
\begin{equation}\label{eq:good-real-axis}
 \begin{aligned}
 \p\bigl((\Omega_X^{(0)})^c\bigr)
 &\ll
 \sum_{\substack{j\geq0\\2^j\sigma\leq T_0}}
 (2^j\sigma)^{1/19}\ll T_0^{1/19}
 \ll (\log\log X)^{-1/200}.
 \end{aligned}
\end{equation}
We now explain why no independence between $\Omega_X^{(0)}$ and the Euler
product ratio is required. On this event, and for $T\leq|t|\leq2T$, one has
pointwise
\begin{align*}
 \mathbf 1_{\Omega_X^{(0)}}|F_X(1/2+\sigma+it)|^2
 &\leq T^{1/3}
 \left|\frac{F_Y(1/2+\sigma+it)}
 {F_Y(1/2+\sigma)}\right|^2\times
 \prod_{Y<p\leq X}|1+f(p)p^{-1/2-\sigma-it}|^2 .
\end{align*}
The ratio uses only primes at most $Y$, whereas the last product uses only
primes exceeding $Y$.  Independence and
\eqref{eq:short-ratio}--\eqref{eq:long-product-second-moment} therefore give
\[
 \E\!\left[\mathbf 1_{\Omega_X^{(0)}}
 |F_X(1/2+\sigma+it)|^2\right]\ll T^{4/3}\log X.
\]
Since the dyadic interval has length $O(T)$ and its kernel is $O(T^{-2})$,
it follows
that
\[
 \E\left[\mathbf 1_{\Omega_X^{(0)}}\frac1{\log X}
 \int_{T\leq|t|\leq2T}
 \frac{|F_X(1/2+\sigma+it)|^2}{\sigma^2+t^2}\,\mathrm{d}t\right]
 \ll T^{1/3}.
\]

For $|t|\leq\sigma$, take $Y=X$. On
$\Omega_X^{(0)}$ we have
$|F_X(1/2+\sigma)|^2\leq\sigma^{1/3}$, while
\eqref{eq:short-ratio} remains valid. Since
\(
 \frac1{\log X}
 \int_{|t|\leq\sigma}\frac{\mathrm{d}t}{\sigma^2+t^2}
 \ll1,
\)
the same argument gives
\[
 \E\left[
 \mathbf 1_{\Omega_X^{(0)}}
 \frac1{\log X}
 \int_{|t|\leq\sigma}
 \frac{|F_X(1/2+\sigma+it)|^2}{\sigma^2+t^2}\,\mathrm{d}t
 \right]
 \ll\sigma^{1/3}.
\]
Summing over the dyadic values of $T$ therefore yields
\[
 \E\left[
 \mathbf 1_{\Omega_X^{(0)}}
 \frac1{\log X}
 \int_{|t|\leq T_0}
 \frac{|F_X(1/2+\sigma+it)|^2}{\sigma^2+t^2}\,\mathrm{d}t
 \right]
 \ll
 \sigma^{1/3}+\sum_{2^j\sigma\leq T_0}(2^j\sigma)^{1/3}
 \ll T_0^{1/3}.
\]
Markov's inequality now gives
\begin{equation}\label{eq:low-frequency-bound}
 \begin{aligned}
 &\p\left(
 \Omega_X^{(0)}\ \text{and}\
 \frac1{\log X}\int_{|t|\leq T_0}
 \frac{|F_X(1/2+\sigma+it)|^2}{\sigma^2+t^2}\,\mathrm{d}t
 >(\log\log X)^{-1/200}
 \right)\\
 &\qquad\ll
 (\log\log X)^{1/200}T_0^{1/3}
 \ll (\log\log X)^{-1/40}.
 \end{aligned}
\end{equation}

The other three ranges follow from \eqref{eq:mc3-old}.  Since $q=2/3$ and
$(a+b)^q\leq a^q+b^q$, decomposition into unit intervals gives
\begin{equation}\label{eq:middle-frequency-bound}
 \E\left(
 \frac1{\log X}\int_{T_0<|t|\leq1}
 \frac{|F_X(1/2+\sigma+it)|^2}{\sigma^2+t^2}\,\mathrm{d}t
 \right)^q
 \ll T_0^{-2q}(\log\log X)^{-q/2}=(\log\log X)^{-1/5}.
\end{equation}
For $1<|t|\leq(\log X)^{900}$, cover the range by intervals
$[N-1/2,N+1/2]$. The factor $(1+N^2)^{-1}$ from the kernel
similarly yields
\[
 \begin{aligned}
 &\E\left(
 \frac1{\log X}\int_{1<|t|\leq(\log X)^{900}}
 \frac{|F_X(1/2+\sigma+it)|^2}{\sigma^2+t^2}\,\mathrm{d}t
 \right)^q\\
 &\qquad\ll (\log\log X)^{-q/2}\sum_{N\in\Z}
 \frac{(\log\log(|N|+10))^q}{(1+N^2)^q}
 \ll (\log\log X)^{-1/3}.
 \end{aligned}
\]
The series converges because $2q=4/3>1$. Finally,
$\E|F_X(1/2+\sigma+it)|^2
=\prod_{p\leq X}(1+p^{-1-2\sigma})\ll\log X$, so the first moment of the
tail is
\[
 \E\left(
 \frac1{\log X}\int_{|t|>(\log X)^{900}}
 \frac{|F_X(1/2+\sigma+it)|^2}{\sigma^2+t^2}\,\mathrm{d}t
 \right)\ll
 \int_{|t|>(\log X)^{900}}\frac{\mathrm{d}t}{1+t^2}
 \ll(\log X)^{-900}.
\]

Applying Markov's inequality with threshold $(\log\log X)^{-1/200}$ multiplies the first two
$q$-th-moment estimates by $(\log\log X)^{q/200}=(\log\log X)^{1/300}$.  The resulting
probabilities, as well as the first-moment tail probability, are
$o((\log\log X)^{-1/200})$.  Combining these estimates with
\eqref{eq:good-real-axis} and \eqref{eq:low-frequency-bound} proves the
lemma.
\end{proof}

We can now state the estimate for the $X$-smooth contribution in the form
needed later.

\begin{proposition}[Mean square of the $X$-smooth contribution]
\label{prop:old-prime-energy}
\begin{equation}\label{eq:old-prime-energy}
 \p\left(\frac1{\log X}\int_{X^{8/7}}^{X^{4/3}}
 |C_X(x)|^2\frac{\mathrm{d}x}{x}>(\log\log X)^{1/3}\right)
 \ll (\log\log X)^{-1/200}.
\end{equation}
\end{proposition}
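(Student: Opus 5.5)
We decompose $C_X(x)=M_X(x)-D_X(x)$ as above. Since $|C_X|^2\le 2|M_X|^2+2|D_X|^2$, it suffices to bound the logarithmic mean square of each piece separately and then combine the two estimates by a union bound.

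\emph{The $D_X$ term.} We use the orthogonality bound $\E D_X(x)^2\le 1$ established above. Integrating over $X^{8/7}\le x\le X^{4/3}$ against $\mathrm{d}x/x$ gives
\[
 \E\frac1{\log X}\int_{X^{8/7}}^{X^{4/3}}|D_X(x)|^2\frac{\mathrm{d}x}{x}\le \frac{4/3-8/7}{1}\ll 1.
\]
Markov's inequality then shows that this average exceeds $\tfrac14(\log\log X)^{1/3}$ with probability $\ll(\log\log X)^{-1/3}$. This is acceptable.

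\emph{The $M_X$ term.} We use Parseval \eqref{eq:parseval-dirichlet} with $a_n=g(n)n^{-1/2}\mathbf 1_{P^+(n)\le X}$, so that $A(s)=F_X(1/2+s)$, and with $\sigma=1/\log X$. The sum has finitely many terms, so convergence is immediate. On $[X^{8/7},X^{4/3}]$ we have $x^{-2\sigma}\ge e^{-8/3}$, and therefore
\[
 \int_{X^{8/7}}^{X^{4/3}}|M_X(x)|^2\frac{\mathrm{d}x}{x}
 \le e^{8/3}\int_1^\infty|M_X(x)|^2\frac{\mathrm{d}x}{x^{1+2\sigma}}
 \ll\int_{\R}\frac{|F_X(1/2+\sigma+it)|^2}{\sigma^2+t^2}\,\mathrm{d}t .
\]
(The integral over $x<1$ vanishes, since $M_X(x)=0$ there.) Dividing by $\log X$, Lemma~\ref{lem:weighted-squarefree-energy} shows that the right side, normalised, is $\ll(\log\log X)^{-1/200}$ outside an event of probability $\ll(\log\log X)^{-1/200}$. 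In particular it is below $\tfrac14(\log\log X)^{1/3}$ for large $X$.

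\emph{Conclusion and main difficulty.} Combining the two exceptional events gives \eqref{eq:old-prime-energy}. The genuine obstacle is the low-frequency singularity of the kernel $(\sigma^2+t^2)^{-1}$, and that has already been absorbed into Lemma~\ref{lem:weighted-squarefree-energy}. What remains is bookkeeping: comparing the weight $x^{-1-2\sigma}$ with $x^{-1}$ on the range $[X^{8/7},X^{4/3}]$ and checking the constants. The result obtained this way is actually much stronger than stated, since the threshold $(\log\log X)^{1/3}$ is very generous.
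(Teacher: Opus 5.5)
Your proposal is correct and matches the paper's proof step for step. It uses the split $C_X=M_X-D_X$ with $|C_X|^2\le 2|M_X|^2+2|D_X|^2$, controls $D_X$ by Tonelli, $\E D_X(x)^2\le 1$ and Markov, and controls $M_X$ by Parseval at $\sigma=1/\log X$, the comparison $x^{2\sigma}\le e^{8/3}$ on $[X^{8/7},X^{4/3}]$, and Lemma~\ref{lem:weighted-squarefree-energy}.

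Your closing remark should be read narrowly: only the threshold $(\log\log X)^{1/3}$ has room to spare. The exceptional probability $(\log\log X)^{-1/200}$ is still set by Lemma~\ref{lem:weighted-squarefree-energy}.
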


\begin{proof}
Set $\sigma=1/\log X$. Applying Parseval's identity
\eqref{eq:parseval-dirichlet} to the Dirichlet polynomial
$F_X(1/2+s)$ gives
\[
 \int_1^\infty |M_X(x)|^2x^{-1-2\sigma}\,\mathrm{d}x
 =\frac1{2\pi}\int_{-\infty}^{\infty}
 \frac{|F_X(1/2+\sigma+it)|^2}{\sigma^2+t^2}\,\mathrm{d}t.
\]
Since $x^{2\sigma}\leq\exp(8/3)$ for $x\leq X^{4/3}$,
Lemma~\ref{lem:weighted-squarefree-energy} and the preceding identity
imply, outside an event of probability $O((\log\log X)^{-1/200})$, that
\[
 \frac1{\log X}\int_{X^{8/7}}^{X^{4/3}}
 |M_X(x)|^2\frac{\mathrm{d}x}{x}
 \ll (\log\log X)^{-1/200}.
\]

On the other hand, Tonelli's theorem and the estimate
$\E|D_X(x)|^2\leq1$ give
\[
 \begin{aligned}
 \E\left(\frac1{\log X}\int_{X^{8/7}}^{X^{4/3}}
 |D_X(x)|^2\frac{\mathrm{d}x}{x}\right)
 &=
 \frac1{\log X}\int_{X^{8/7}}^{X^{4/3}}
 \E|D_X(x)|^2\frac{\mathrm{d}x}{x}\\
 &\leq
 \frac1{\log X}\log\left(X^{4/3-8/7}\right)
 =\frac4{21}.
 \end{aligned}
\]
Thus Markov's inequality shows that, outside an event of probability
$O((\log\log X)^{-1/3})$, the last integral is at most $(\log\log X)^{1/3}/4$. For sufficiently
large $X$, the corresponding integral for $M_X$ is also at most
$(\log\log X)^{1/3}/4$. Since
\(
 |C_X(x)|^2\leq2|M_X(x)|^2+2|D_X(x)|^2,
\)
the required bound follows by taking the union of the two exceptional
events and using $(\log\log X)^{-1/3}\leq (\log\log X)^{-1/200}$.
\end{proof}

An immediate consequence, useful when the endpoints are logarithmically
shifted, is that whenever the integral in \eqref{eq:old-prime-energy} is at
most $(\log\log X)^{1/3}$, one has, for every $A>0$,
\[
 \frac1{\log X}\int_{X^{8/7}}^{X^{4/3}}
 \mathbf 1_{\{|C_X(x)|>A\}}\frac{\mathrm{d}x}{x}
 \leq\frac{(\log\log X)^{1/3}}{A^2}.
\]

\section{The multiplier and conditional variance}\label{sec:variance}

\subsection{The  multiplier}

Recall $F_X$, $B_X$ and $Z_X$ from
\eqref{eq:def-squarefree-product}, \eqref{eq:def-B}
and \eqref{eq:def-zeta-product}.
The Dirichlet series of the extended Rademacher RMF restricted to $X$-smooth integers is 
\[
 \prod_{p\leq X}(1-f(p)p^{-s})^{-1}=F_X(s)Z_X(2s).
\]
This elementary factorisation is the point at which the extended
Rademacher RMF separates from the Rademacher RMF.

For $\sigma\geq 0$ and $t\in\R$, define
\begin{equation}\label{eq:def-multiplier}
 m_{X,\sigma}(t)
 \coloneqq
 (4^{\sigma+it}-1)Z_X(1+2\sigma+2it).
\end{equation}
The factor $4^{\sigma+it}-1$ removes the singular growth of $Z_X$ near
$\sigma=t=0$. We shall repeatedly use the following bounds.

\begin{lemma}\label{lem:multiplier}
For sufficiently large $X$, suppose that
\(
 0\leq \sigma\leq
 {100(\log\log X)^{1/100}}/{\log X}.
\)
Then the following hold.
\begin{enumerate}
\item If $|t|\leq1$, then $|m_{X,\sigma}(t)|\ll1$.
\item For every $t\in\R$,
\[
 |m_{X,\sigma}(t)|\ll
 \min\left(\log X,\,(1+\log(2+|t|))^2\right).
\]
\item If $1/3\leq |t|\leq1/2$, then
$|m_{X,\sigma}(t)|\asymp1$.
\end{enumerate}
All implied constants are absolute.
\end{lemma}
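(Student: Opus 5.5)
Write $s=\sigma+it$, so that $m_{X,\sigma}(t)=(4^{s}-1)Z_X(1+2s)$ with $0\le\sigma\le\sigma_1:=100(\log\log X)^{1/100}/\log X$. The plan is to compare $\log Z_X(1+2s)$ with $\log\zeta(1+2s)$ (or a smooth surrogate) via Mertens-type estimates. Near $s=0$ we let the simple zero of $4^s-1$ cancel the pole, and away from $s=0$ we use the standard bounds for $\sum_{p\le X}p^{-1-2\sigma-2it}$.

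\emph{Expansion of $\log Z_X$.} We have
\[
\log Z_X(1+2s)=\sum_{p\le X}p^{-1-2s}+O(1),
\]
since the prime-square and higher terms converge absolutely and uniformly for $\sigma\ge0$. By the prime number theorem with error term and partial summation,
\[
\sum_{p\le X}p^{-1-2s}=\int_2^X\frac{u^{-2s}}{u\log u}\,du+O(1)=\int_{2\log 2}^{2\log X}\frac{e^{-sv}}{v}\,dv+O(1)
\]
after the substitution $v=2\log u$. The integral is a truncated exponential integral. Using the standard evaluation
\[
\int_{a}^{b}\frac{e^{-sv}}{v}\,dv=\log\frac{1}{|s|}+O(1)+O\big(\min(\log(b|s|),\ldots)\big),
\]
which is valid when $a\ll1$ and $|s|\ge 1/b$ (for $|s|\le 1/b$ the integral is $\log(b/a)+O(1)$), we get
\[
\operatorname{Re}\log Z_X(1+2s)=\log\min\Big(\log X,\frac{1}{|s|}\Big)+O(1)\qquad\text{for }|s|\le 1.
\]
The main care is needed here: $\sigma$ may be as large as $(\log\log X)^{1/100}/\log X$, but the bound $\sigma\log X\ll(\log\log X)^{1/100}$ only makes $|Z_X|$ smaller, and for $|s|\ge 1/\log X$ the formula above is uniform in this range. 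So $|Z_X(1+2s)|\asymp\min(\log X,1/|s|)$ for $|t|\le1$, up to absolute constants. The lower bound uses the real part; the error terms in the real part are $O(1)$ in both directions because the oscillatory tail $\int_{1/|s|}^{b}e^{-sv}v^{-1}dv$ is bounded by integration by parts.

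\emph{Part (1).} Combine this with $|4^s-1|\ll|s|$ for $|s|\le2$. This gives $|m_{X,\sigma}(t)|\ll|s|\min(\log X,1/|s|)\ll1$.

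\emph{Part (3).} For $1/3\le|t|\le1/2$ we have $|s|\asymp1$, so $|Z_X|\asymp1$ by the two-sided estimate. Also $|4^{s}-1|\asymp1$, because $4^{it}\ne1$ exactly when $2t\log 2\notin2\pi\Z$, i.e.\ when $t\notin(\pi/\log 2)\Z$; since $\pi/\log 2\approx4.53$, this holds throughout the interval, and $\sigma\to0$ uniformly. This is the step where absolute constants must be checked, but it is routine.

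\emph{Part (2).} The bound $|Z_X(1+2s)|\le Z_X(1)\ll\log X$ is trivial by Mertens, and $|4^s-1|\le 5$, which gives the $\log X$ bound. For $|t|\ge1$, we bound $\big|\sum_{p\le X}p^{-1-2s}\big|\le\log(2+|t|)+O(1)$ by splitting at $p\le e^{1/|t|}$-type scales; more simply, we use the classical estimate $\sum_{p\le X}p^{-1-2it}\ll\log\log(2+|t|)+1$ from the zero-free region. Either way, this yields $|Z_X|\ll\exp(\log\log(2+|t|)+O(1))\ll1+\log(2+|t|)$, which is comfortably within the claimed $(1+\log(2+|t|))^2$. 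For $|t|\le1$, part (1) already suffices.

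The main obstacle I expect is making the uniform two-sided estimate for $\operatorname{Re}\sum_{p\le X}p^{-1-2s}$ near $s=0$ rigorous with absolute constants across the whole $\sigma$-range. Everything else is bookkeeping.
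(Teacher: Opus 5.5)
Part (1) follows the paper's argument; part (3) takes a valid shortcut; part (2) has one route that fails and one that needs an extra justification.

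\textbf{Part (1).} You reduce $\log|Z_X(1+2s)|$ to $\operatorname{Re}\sum_{p\le X}p^{-1-2s}$ and convert this by the prime number theorem into $\int e^{-2\sigma v}\cos(2tv)\,dv/v$. You show it equals $\log\min(\log X,1/(\sigma+|t|))+O(1)$, using exponential decay when $\sigma\ge|t|$ and integration by parts on the oscillatory tail when $|t|>\sigma$. You then cancel against $|4^s-1|\ll|s|$, exactly as the paper does. Discard your displayed ``standard evaluation'' with the extra $O(\min(\log(b|s|),\ldots))$ term. The correct statement is simply $\operatorname{Re}\int_a^b e^{-sv}\,dv/v=\log\min(b,1/|s|)+O(1)$, which is what your integration-by-parts remark proves.

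\textbf{Part (3).} The paper shows $Z_X(1+2\sigma+2it)=\zeta(1+2\sigma+2it)(1+o(1))$ and uses that $\zeta$ is bounded and nonzero there. You instead read $|Z_X|\asymp1$ directly off the same two-sided estimate at $|s|\asymp1$. This is equally valid and a little more economical.

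\textbf{Part (2).} Your first suggestion fails. For $|t|\ge1$ a split at $e^{1/|t|}\le e$ carries no information. Moreover, a bound $\log(2+|t|)+O(1)$ on the prime sum would only give $|Z_X|\ll 2+|t|$, far weaker than $(1+\log(2+|t|))^2$.

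Your second suggestion has the right order, but it is incomplete as written. The classical estimate is usually stated for $\log\zeta(1+2it)$, or for $\sum_{p\le x}$ with $x$ large in terms of $t$. You need it for $\operatorname{Re}\sum_{p\le X}p^{-1-2\sigma-2it}$ uniformly in $X$, and you have not said why that holds.

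The paper gets uniformity by splitting at $Y=\exp(C\log^2(2+|t|))$:
\begin{itemize}
\item If $X\le Y$, Mertens directly gives $\log\log X+O(1)\le 2\log\log(2+|t|)+O(1)$.
\item If $X>Y$, Mertens handles $p\le Y$, and the prime number theorem with the de la Vall\'ee Poussin error term gives $\sum_{Y<p\le X}p^{-1-2\sigma-2it}\ll1$.
\end{itemize}
This split is where the square in $(1+\log(2+|t|))^2$ comes from.

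Alternatively, you can justify your sharper bound as follows. First, $\sum_{p\le X}p^{-1-2s}=\log\zeta(1+2s+1/\log X)+O(1)$ uniformly, since the difference is at most $\sum_{p\le X}\frac{\log p}{p\log X}+\sum_{p>X}p^{-1-1/\log X}+O(1)\ll1$. Then use the elementary bound $|\zeta(w)|\ll\log(2+|\operatorname{Im}w|)$ for $\operatorname{Re}w\ge1$, $|\operatorname{Im}w|\ge1$. This gives $|m_{X,\sigma}(t)|\ll\log(2+|t|)$, which is more than the lemma requires, and it avoids the zero-free region altogether.
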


\begin{proof}
Expanding the logarithm of the Euler product, the terms coming from
prime powers $p^k$, $k\geq2$, contribute $O(1)$ uniformly in $\sigma$.
Hence
\begin{equation}\label{eq:ZX-prime-sum}
 \log |Z_X(1+2\sigma+2it)|
 =
 \Re\sum_{p\leq X}\frac{1}{p^{1+2\sigma+2it}}+O(1).
\end{equation}
Suppose first that $|t|\leq1$. By the prime number theorem and partial summation, we have
\[
\begin{aligned}
 \Re\sum_{p\leq X}\frac1{p^{1+2\sigma+2it}}
 &=
 \int_{\log 2}^{\log X}
 e^{-2\sigma v}\cos(2tv)\frac{\mathrm{d}v}{v}
 +O(1)\\
 &=
 \log\left(
 1+\min\left(\log X,\frac{1}{\sigma+|t|}\right)
 \right)+O(1).
 \end{aligned}
\]
To get the last line, we split the integral at \( \min\left(\log X,1/{(\sigma+|t|)}\right)
\)
and use exponential decay when $\sigma\geq|t|$ and integration by
parts when $|t|>\sigma$.
Since
\( |4^{\sigma+it}-1|\ll\sigma+|t|,
\)
we obtain
\[
 |m_{X,\sigma}(t)|
 \ll
 (\sigma+|t|)
 \left(
 1+\min\left(\log X,\frac{1}{\sigma+|t|}\right)
 \right)
 \ll1,
\]
which proves the first assertion.

For $|t|\geq1$, we have $|4^{\sigma+it}-1|\ll1$, while trivially
\(
 |Z_X(1+2\sigma+2it)|\ll\log X.
\)
If
\(
 \log X\ge C\log^2(2+|t|)
\) for some sufficiently large constant $C$,
 then by Mertens'
theorem and the prime number theorem, we have
\[
 \sum_{p\leq \exp(C\log^2(2+|t|))}\frac1{p^{1+2\sigma+2it}}
 \le 2\log\log(2+|t|)+O(1),
\]
and
\[
 \sum_{\exp(C\log^2(2+|t|))<p\leq X}
 \frac1{p^{1+2\sigma+2it}}
 \ll
 \frac1{|t|\log^2(2+|t|)}
 +(1+|t|)e^{-c\sqrt C\log(2+|t|)}
 \ll1.
\]
It follows from \eqref{eq:ZX-prime-sum} that
\[
 |Z_X(1+2\sigma+2it)|
 \ll(1+\log(2+|t|))^2.
\]
If instead \( \log X\le C\log^2(2+|t|),
\)
the same bound follows clearly from
\[
 |Z_X(1+2\sigma+2it)|
 \ll\log X\ll\log^2(2+|t|).
\]
Together with the trivial bound, this proves
\[
 |m_{X,\sigma}(t)|
 \ll
 \min\left(\log X,(1+\log(2+|t|))^2\right).
\]

For $1/3\leq|t|\leq1/2$, the prime number theorem and partial summation
give, uniformly in $\sigma$, 
\[ \sum_{p>X}\frac1{p^{1+2\sigma+2it}}\ll \frac{1}{|t|\log X}+e^{-c\sqrt{\log X}} =o(1),
\]
while the terms with prime powers $p^k$, $k\geq2$, contribute
$O(X^{-1})$.  Comparing the logarithms of the corresponding Euler
products, we obtain
\[
 Z_X(1+2\sigma+2it)
 =
 \zeta(1+2\sigma+2it)(1+o(1)).
\]
Since $\zeta(1+2\sigma+2it)$ and $4^{\sigma+it}-1$ are both bounded
above and away from zero uniformly on this range, it follows that
\(
 |m_{X,\sigma}(t)|\asymp1,
\) which proves the third assertion.
\end{proof}

For $\mathrm{Re}\,s>1$, by termwise integration we obtain the exact identity
\begin{equation}\label{eq:B-mellin}
 \int_0^\infty B_X(z)z^{-s-1}\,\mathrm{d}z
 =\left(4^{s-1/2}-1\right)\frac{F_X(s)Z_X(2s)}s
 \qquad(\mathrm{Re}\,s>1).
\end{equation}
For fixed $X$, the number of $X$-smooth integers at most $z$ is at most
\[
 \prod_{p\leq X}\left(1+\frac{\log z}{\log p}\right)
 \ll_X(1+\log z)^{\pi(X)}.
\]
Thus the Mellin transform on the left of \eqref{eq:B-mellin} converges
absolutely and is holomorphic for $\mathrm{Re}\,s>0$; there is no contribution near
zero because $B_X(z)=0$ for $z<1/4$. The right-hand side is also
holomorphic in this half-plane. Since both sides are holomorphic for $\Re s>0$ and agree for
$\Re s>1$, the identity theorem extends \eqref{eq:B-mellin}
to $\Re s>0$. In particular, on writing
$s=1/2+\sigma+it$, the
right-hand side of \eqref{eq:B-mellin} is
\[
 \frac{m_{X,\sigma}(t)F_X(1/2+\sigma+it)}
 {1/2+\sigma+it}.
\]
Thus the $\zeta$-type growth of $Z_X(2s)$ near $s=1/2$ is
cancelled uniformly in $X$ by the multiplier $4^{s-1/2}-1$.

\subsection{The conditional variance}

Conditioning on the variables $(f(p))_{p\leq X}$, the quantities
$B_X(x/p)$ are fixed, while the variables $f(p)$, $p>X$, remain
independent Rademacher random variables. Recall the definition of $Y_x$, we have
\[
\var_X(Y_x)=\mathrm{Var}\left(Y_x\mid(f(p))_{p\leq X}\right)
 =
 \frac1x\sum_{X<p\leq4x}B_X(x/p)^2.
\]
The following proposition shows that this conditional variance is
typically not too small. The prrof is an adaptation of the corresponding variance estimate
in \cite[Proposition 1]{Harper2013Gaussian}, with the  multiplier accounting for the
additional Euler factor arising in the extended Rademacher RMF.


\begin{proposition}[Conditional variance of the large-prime increment]
\label{prop:B-variance}
Let $W_0\leq W\leq(\log\log X)^{1/100}$, where $W_0$ is a
sufficiently large absolute constant, and suppose that
$X^{8/7}\leq x\leq X^{4/3}/4$. Then we have 
\begin{equation}\label{eq:B-variance-lower}
 \mathrm{Var}_X(Y_x)
 \gg \frac{e^{-2.2W}}{\sqrt{\log\log X}}
\end{equation}
with  probability $\ge1-O(e^{-0.1W})$.
\end{proposition}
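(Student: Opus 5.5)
We follow Harper's argument for \cite[Proposition~1]{Harper2013Gaussian}: first replace the prime sum by an integral, then apply Parseval, and finally invoke the chaos lower bound.

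\textbf{Step 1: from primes to an integral.} Write $\var_X(Y_x)=\frac1x\sum_{X<p\le 4x}B_X(x/p)^2$. Since $B_X(z)$ is a step function that jumps only at $X$-smooth integers and quarter-integers, we cover $(X,4x]$ by short intervals $[y,y(1+1/X)]$. On such an interval $x/p$ varies by a factor $1+O(1/X)$. Restricting to $p\le x$, so that $x/p\ge1$, we then use a short interval prime count (Brun--Titchmarsh in the lower direction, or simply the prime number theorem with $h=y/\log^{A}$) to get
\[
\var_X(Y_x)\gg \frac1x\int_{X}^{x}B_X(x/y)^2\frac{\mathrm dy}{\log y}
\gg\frac{1}{\log X}\int_{1}^{x/X}\frac{B_X(z)^2}{z^2}\,\mathrm dz .
\]
Here $z=x/y$, and $\log y\asymp\log X$ in the range considered. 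The jumps of $B_X$ spoil the comparison only on a set of $y$ where $x/y$ lies within $O(z/X)$ of an integer. I expect to handle this with the crude moments of Lemma~\ref{lem:complete-moments}, or by averaging $B_X$ over tiny windows, as Harper does. Since $x/X\ge X^{1/7}$, we then truncate to $1\le z\le X^{1/7}$.

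\textbf{Step 2: Parseval with a small shift.} Let $\sigma=1/\log X$, and insert the weight $z^{-2\sigma}\le1$ on the range $z\le X^{1/7}$. The tail $z>X^{1/7}$ is not available to us, so we bound it from above. By \eqref{eq:B-mellin} and \eqref{eq:parseval-dirichlet} with $s=1/2+\sigma'+it$, we have
\[
\int_0^\infty \frac{B_X(z)^2}{z^{2+2\sigma'}}\mathrm dz=\frac1{2\pi}\int_\R\frac{|m_{X,\sigma'}(t)|^2|F_X(1/2+\sigma'+it)|^2}{|1/2+\sigma'+it|^2}\mathrm dt .
\]
We apply this twice: with $\sigma'=\sigma$ for the main term, and with a larger shift $\sigma'=\sigma\cdot 100(\log\log X)^{1/100}$, together with $z^{-2\sigma}\ge$, to control the tail beyond $X^{1/7}$. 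The tail then carries a decay factor $X^{-c(\sigma'-\sigma)}$, which makes it smaller than the main term by roughly $e^{-c(\log\log X)^{1/100}}$. Lemma~\ref{lem:multiplier}(2) keeps the multiplier harmless in the upper bound used for the tail, where we rely on the first moment $\E|F_X|^2\ll\log X$ via Markov.

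\textbf{Step 3: lower bound and the obstacle.} The main term is at least the part of the integral over $1/3\le|t|\le1/2$. There Lemma~\ref{lem:multiplier}(3) gives $|m_{X,\sigma}|\asymp1$ and the denominator is $\asymp1$, so the term is $\gg\int_{1/3}^{1/2}|F_X(1/2+\sigma+it)|^2\mathrm dt$. Multiplicative Chaos Result~4 of \cite{Harper2023Large} shows that this integral is $\gg e^{-2.2W}\log X/\sqrt{\log\log X}$ with probability $1-O(e^{-0.1W})$, possibly after rescaling the interval. Dividing by $\log X$ gives \eqref{eq:B-variance-lower}. The main obstacle is the tail comparison in Step 2. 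Converting the full Parseval integral into the truncated range $z\le X^{1/7}$ requires that the upper bound for the tail be smaller than the chaos lower bound, even though the lower bound is only $e^{-2.2W}/\sqrt{\log\log X}$ times $\log X$. This is where the restriction $W\le(\log\log X)^{1/100}$ and the admissible range of $\sigma$ in Lemma~\ref{lem:multiplier} are used. The prime-to-integral discretisation in Step 1 is also delicate, but it is routine.
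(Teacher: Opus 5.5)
The overall route matches the paper's (discretise the prime sum, apply Parseval, use the chaos lower bound on $1/3\leq|t|\leq1/2$ where the multiplier is $\asymp1$). However, the tail step in your Step~2 runs in the wrong direction, and as written it cannot work.

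For $z>X^{1/7}$ and $\sigma'>\sigma$ one has $z^{-2-2\sigma}=z^{2(\sigma'-\sigma)}z^{-2-2\sigma'}\geq X^{2(\sigma'-\sigma)/7}z^{-2-2\sigma'}$. So a Parseval bound at the larger shift $\sigma'$ gives no upper bound for $\int_{X^{1/7}}^\infty B_X(z)^2z^{-2-2\sigma}\,\mathrm{d}z$. Moreover, with $\sigma=1/\log X$ the weight $z^{-2\sigma}$ is still $\asymp1$ on all of $X^{1/7}\leq z\leq X$, so nothing makes this tail smaller than the main term by the required factor. The decay mechanism only works if the \emph{main} term is taken at the larger shift $\eta$ and the tail is majorised via $z^{-2-2\eta}\leq(X^{1/7}/2)^{-\eta}z^{-2-\eta}$, i.e.\ by a Parseval integral at shift $\eta/2$.

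Then the chaos lower bound of your Step~3 must be applied at shift $\eta$, and the size of $\eta$ becomes decisive. The shifted Euler product only feels primes up to $e^{1/\eta}$, so its mean square on $[1/3,1/2]$ is typically of order $\eta^{-1}(\log\log X)^{-1/2}$ rather than $(\log X)(\log\log X)^{-1/2}$. With your $\eta\asymp(\log\log X)^{1/100}/\log X$ this loses a factor $(\log\log X)^{1/100}$ that $e^{-0.1W}$ cannot absorb when $W$ is bounded. The paper takes $\eta=42W/\log X$, so that $(X^{1/7}/2)^{-\eta}\leq2e^{-6W}$, while Multiplicative Chaos Result~4 still gives $\gg\log X/(42We^{2.1W}\sqrt{\log\log X})$; the loss $1/W$ is absorbed into $e^{-0.1W}$.

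With decay of only $e^{-6W}$, your first-moment/Markov bound for the tail is too weak. It gives $\int_0^\infty B_X(z)^2z^{-2-\eta}\,\mathrm{d}z\ll e^{0.1W}\log X/W$, which after multiplication by $e^{-6W}$ still exceeds the main term unless $W\gg\log\log\log X$. The missing ingredient is the low-moment bound: Multiplicative Chaos Result~3 with $q=2/3$ on unit $t$-intervals, where the factor $(1+\log(2+|N|))^4$ from the second assertion of Lemma~\ref{lem:multiplier} keeps the sum over $N$ convergent. This saves the $\sqrt{\log\log X}$ and gives $\ll e^{3W}\log X/(W\sqrt{\log\log X})$ outside probability $O(e^{-2W})$.

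Step~1 also rests on a prime count you do not have. The jumps need no smoothing or moment bounds: $B_X$ is exactly constant on each $[r/4,(r+1)/4)$, so every prime $p\in(4x/(r+1),4x/r]$ has $B_X(x/p)=B_X(r/4)$. What is needed is $\gg x/(r^2\log X)$ primes in each such interval, uniformly for $r$ up to $\asymp X^{1/7}$. Truncating at a fixed power of $X$ is exactly what lets $(X^{1/7})^{-\eta}$ decay with $\eta\asymp W/\log X$. These intervals have length $\asymp x/r^2\gg(x/r)^{6/7}$, so Huxley's theorem supplies the count.

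The tools you name do not. Brun--Titchmarsh gives only upper bounds. The intervals $[y,y(1+1/X)]$ have length at most $y^{1/4}$, far too short for any known prime count. The prime number theorem with $h=y/\log^Ay$ only reaches $r\leq\log^AX$; that would force truncation at $z\leq\log^AX$, hence a shift of size about $W/\log\log X$, which destroys the main term.
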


\begin{proof}
 We first use the constancy of $B_X$ on
intervals of length $1/4$, together with Huxley's theorem, to bound the
prime sum from below by a weighted integral of $B_X^2$. We then apply
Parseval's identity \eqref{eq:parseval-dirichlet} and multiplicative-chaos estimates to obtain the
required lower bound.

For each integer $1\leq r\leq\lceil2X^{1/7}\rceil-1$, consider the
interval
\(
 4x/(r+1)<p\leq4x/r.
\)
Its left endpoint exceeds $X$ for all sufficiently large $X$, since
$x\geq X^{8/7}$, while its right endpoint is at most $4x$. Its length is
$\asymp x/r^2$ and its left endpoint is $\asymp x/r$. Moreover,
$r^8\ll x$, and hence
\(
 x/r^2\gg(x/r)^{6/7}.
\)
Since $6/7>7/12$, Huxley's theorem on primes in short intervals
\cite[Theorem~1]{Huxley1972} is available. Hence
\[
 \#\left\{p:\frac{4x}{r+1}<p\leq\frac{4x}{r}\right\}
 \gg\frac{x}{r(r+1)\log X}.
\]

For these primes, $x/p\in[r/4,(r+1)/4)$, so
$B_X(x/p)=B_X(r/4)$. Since
\(
 \int_{r/4}^{(r+1)/4}z^{-2}\,\mathrm{d}z=4/(r(r+1)),
\)
summing over $r$ we get
\[
 \frac1x\sum_{X<p\leq4x}B_X(x/p)^2
 \gg\frac1{\log X}
 \sum_{1\leq r\leq\lceil2X^{1/7}\rceil-1}
 \frac{B_X(r/4)^2}{r(r+1)}
 \gg\frac1{\log X}
 \int_{1/4}^{X^{1/7}/2}B_X(z)^2\frac{\mathrm{d}z}{z^2}.
\]

It remains to estimate the integral. Let $\eta=42W/\log X$. Since
$B_X(z)=0$ for $z<1/4$ and
$(X^{1/7}/2)^{-\eta}\leq2e^{-6W}$. By comparing $z^{-2}$ with $z^{-2-2\eta}$ on
$[1/4,X^{1/7}/2]$ and controlling the tail $z>X^{1/7}/2$ by
$z^{-2-\eta}$, we obtain
\[
\int_{1/4}^{X^{1/7}/2}B_X(z)^2\frac{\mathrm{d}z}{z^2}
 \geq4^{-2\eta}\left(
 \int_0^\infty B_X(z)^2\frac{\mathrm{d}z}{z^{2+2\eta}}
 -2e^{-6W}\int_0^\infty B_X(z)^2
 \frac{\mathrm{d}z}{z^{2+\eta}}
 \right).
\]
Parseval's identity \eqref{eq:parseval-dirichlet} and \eqref{eq:B-mellin} give
\begin{equation}\label{eq:B-main-parseval}
 \int_0^\infty B_X(z)^2\frac{\mathrm{d}z}{z^{2+2\eta}}
 =
 \frac1{2\pi}\int_{\R}
 \left|
 \frac{m_{X,\eta}(t)F_X(1/2+\eta+it)}
 {1/2+\eta+it}
 \right|^2\,\mathrm{d}t.
\end{equation}
The third assertion of Lemma~\ref{lem:multiplier}, followed by
Multiplicative Chaos Result~4 of
\cite[Section~2.4]{Harper2023Large}, shows that, outside an event of
probability $O(e^{-0.1W})$, this is
\[
 \gg\int_{1/3}^{1/2}|F_X(1/2+\eta+it)|^2\,\mathrm{d}t
 \gg\frac{\log X}
 {42W e^{2.1W}\sqrt{\log\log X}}.
\]

We next show that the second integral in the truncation error is
negligible. We claim that
\begin{equation}\label{eq:B-tail-moment}
 \E\left(
 \int_{\R}
 \left|
 \frac{m_{X,\eta/2}(t)F_X(1/2+\eta/2+it)}
 {1/2+\eta/2+it}
 \right|^2\,\mathrm{d}t
 \right)^{2/3}
 \ll
 \left(\frac{\log X}
 {W\sqrt{\log\log X}}\right)^{2/3}.
\end{equation}
For each integer $N$ with $|N|\leq\log^{1000}X$, apply
Multiplicative Chaos Result~3 of
\cite[Section~2.4]{Harper2023Large} on
$[N-1/2,N+1/2]$. Lemma~\ref{lem:multiplier} contributes at most
$O((1+\log(2+|N|))^4)$ before taking the $2/3$ power, and the resulting
bounds are summable because
\[
 \sum_{N\in\Z}
 \frac{(1+\log(2+|N|))^{8/3}
 (\log\log(|N|+10))^{2/3}}
 {(1+N^2)^{2/3}}
 <\infty.
\]
The subadditivity of $u^{2/3}$ allows us to sum the contributions of
these intervals. On the complementary range,
$|m_{X,\eta/2}(t)|\ll\log X$ and
$\E|F_X(1/2+\eta/2+it)|^2\ll\log X$, so
\[
 \E\int_{|t|>\log^{1000}X}
 \left|
 \frac{m_{X,\eta/2}(t)F_X(1/2+\eta/2+it)}
 {1/2+\eta/2+it}
 \right|^2\,\mathrm{d}t
 \ll
 \log^3X\int_{|t|>\log^{1000}X}\frac{\mathrm{d}t}{1+t^2}
 \ll\log^{-900}X.
\]
Together with concavity, this proves \eqref{eq:B-tail-moment}.

Parseval's identity \eqref{eq:parseval-dirichlet} with shift $\eta/2$ and Markov's inequality now show
that, outside an event of probability $O(e^{-2W})$,
\[
 \int_0^\infty B_X(z)^2\frac{\mathrm{d}z}{z^{2+\eta}}
 =
 \frac1{2\pi}\int_{\R}
 \left|
 \frac{m_{X,\eta/2}(t)F_X(1/2+\eta/2+it)}
 {1/2+\eta/2+it}
 \right|^2\,\mathrm{d}t
 \ll
 \frac{e^{3W}\log X}{W\sqrt{\log\log X}}.
\]
After multiplication by $2e^{-6W}$, this is smaller than the main term
once $W_0$ is sufficiently large. Since $4^{-2\eta}\asymp1$, we conclude,
outside an event of probability $O(e^{-0.1W})$, that
\[
 \var_X(Y_x)=\frac1x\sum_{X<p\leq4x}B_X(x/p)^2
 \gg
 \frac{e^{-2.1W}}{W\sqrt{\log\log X}}
 \gg
 \frac{e^{-2.2W}}{\sqrt{\log\log X}}.
\]
The probabilistic estimates depend only on $F_X$, so the same exceptional
event works simultaneously for all
$X^{8/7}\leq x\leq X^{4/3}/4$. This proves
\eqref{eq:B-variance-lower}.
\end{proof}

\section{Large conditional covariance estimates}\label{sec:covariances}

We next prove a weighted version of Harper's covariance estimate. For
$0\leq\theta\leq2\pi$ let
\[
 {\mathcal G}_X(\theta)\coloneqq\left\{
 X^{8/7}e^{\theta+2\pi r}:
 r\in\Z_{\geq0},\quad
 X^{8/7}e^{\theta+2\pi r}\leq X^{4/3}/4
 \right\}.
\]
For $x,y\in{\mathcal G}_X(\theta)$ we have
\[
 \cov_X(Y_x,Y_y)=\frac1{\sqrt{xy}}
 \sum_{X<p\leq X^{4/3}}B_X(x/p)B_X(y/p).
\]
Terms with $p>4x$ or $p>4y$ vanish. Throughout this section, $\theta$
is regarded as a fixed common shift. All estimates below are
uniform in $\theta$, and the exceptional events may be chosen independently
of $\theta$.

\begin{proposition}[Sparsity of large conditional covariances]
\label{prop:weighted-covariance}
Outside an event of probability $O((\log\log X)^{-0.1})$, we have,
simultaneously for every $0\leq\theta\leq2\pi$,
\[
 \max_{x\in{\mathcal G}_X(\theta)}
 \#\left\{y\in{\mathcal G}_X(\theta)\setminus\{x\}:
 |\cov_X(Y_x,Y_y)|>(\log\log X)^{-0.59}\right\}
 \leq(\log X)^{0.8}.
\]
\end{proposition}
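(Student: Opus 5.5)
Following Harper's approach, I would reduce the sparsity statement to a high-moment bound for a sum of squared covariances. By Markov/counting, it suffices to show that, outside a small exceptional event,
\[
 \sum_{x\in\mathcal G_X(\theta)}\sum_{y\in\mathcal G_X(\theta)}|\cov_X(Y_x,Y_y)|^{2k}
\]
is at most $(\log X)^{1+0.8}(\log\log X)^{-1.18k}$. More precisely, I would bound $\E\big(\mathbf 1_{\Omega}\sum_{x,y}|\cov_X(Y_x,Y_y)|^{2k}\big)$ with
\[
 k=\Big\lfloor\frac{\log\log X}{5000\log\log\log X}\Big\rfloor .
\]
Here $\Omega$ is an $\mathscr F_X$-measurable barrier event of the type used in \cite[Sections~3.3--3.4]{Harper2023Large}. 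For every $x$ with more than $(\log X)^{0.8}$ bad partners, the sum over $y$ alone exceeds $(\log X)^{0.8}(\log\log X)^{-1.18k}$. Markov's inequality then shows that the existence of such an $x$ has small probability, provided the moment is at most $(\log X)^{0.8-\delta}(\log\log X)^{-1.18k}$ times a small factor. To obtain uniformity in $\theta$, I would integrate the moment over $\theta\in[0,2\pi]$ and discretize. Alternatively, since shifting $\theta$ only moves the grid continuously, I would replace $\sum_{x\in\mathcal G_X(\theta)}$ by a sum over a finer grid $\theta\in(\log X)^{-10}\Z$. Covariances vary by a negligible amount under a shift of $x$ by a factor $1+O((\log X)^{-10})$, because $B_X$ is a step function evaluated at $x/p$ with $p\le X^{4/3}$. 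However, this stability needs some care, so I would first try simply summing over all $O((\log X)^{10})$ discrete shifts and absorbing this polynomial loss into the saving.

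Next, I would write the covariance spectrally. Using Perron's formula together with \eqref{eq:B-mellin}, $B_X(x/p)$ is given by an integral over $s=1/2+\sigma+it$ of $m_{X,\sigma}(t)F_X(s)(x/p)^{s}/s$, with $\sigma\asymp1/\log X$. The covariance then becomes a double integral in $t_1,t_2$ of
\[
 \frac{m F_X(1/2+it_1)\,\overline{m F_X(1/2+it_2)}}{s_1\bar s_2},
\]
weighted by $x^{it_1}y^{-it_2}$ and by a prime sum $\sum_{X<p\le X^{4/3}}p^{-1-i(t_1-t_2)-2\sigma}$. The prime sum has size $\ll\min(1,1/(|t_1-t_2|\log X))$, which effectively forces $|t_1-t_2|\lesssim 1/\log X$. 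Truncating $|t|\le(\log X)^{C}$ costs little by Lemma~\ref{lem:multiplier}(2) and the decay $1/|s|$. Raising $|\cov|^2$ to the $k$-th power and expanding gives $2k$ (really $4k$) frequencies. Summing $x=X^{8/7}e^{\theta+2\pi r}$ over $r$ produces the geometric factor $\min(\log X,\|\Xi\|^{-1})$, and the sum over $y$ produces the same kind of factor for the conjugate frequencies.

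The remaining step is to bound the conditional expectations of products of $|F_X|^2$ at many frequencies, weighted by $|m|^{2}\ll(\log\log\log X)^4$ in the range where the barrier matters, together with these geometric factors. This is exactly Harper's high mixed moment computation, and I would import it essentially verbatim. The multiplier factor contributes at most $\exp(O(k\log\log\log\log X))$ over the $4k$ factors, which is harmless against the $(\log\log X)^{-ck}$ savings at our choice of $k$. Away from near-integer $\Xi$, the factor $\|\Xi\|^{-1}$ gives a power saving of $\log X$ directly. On the near-integer range, I would order the frequencies and split according to the gap sizes. When many gaps are small, the barrier event gives savings on products of $|F_X|^2$ at nearby points. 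When many gaps are large, the constraint $\|\Xi\|\lesssim 1/\log X$ cuts the volume.

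I expect this last step to be the main obstacle. The difficulty is checking that Harper's barrier estimates survive with the extra weights, and that the probability of the barrier's complement is $O((\log\log X)^{-0.1})$. In particular, the small-frequency region $|t|\le1$, where Lemma~\ref{lem:multiplier}(1) gives only boundedness, must be handled so that the removed $Z_X$ pole does not reappear through Perron truncation errors.
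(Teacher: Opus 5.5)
Your outline has two genuine gaps. Both sit at points where the paper deliberately avoids a naive moment computation.

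\textbf{Gap 1: the reduction to a maximum degree.} You apply Markov to $\E\bigl(\mathbf 1_\Omega\sum_x\sum_y|\cov_X(Y_x,Y_y)|^{2k}\bigr)$, summing over $x$ as well as $y$. This cannot give the stated bound.
\begin{enumerate}
\item A bound $(\log X)^{1.8}(\log\log X)^{-1.18k}$ on the double sum only says there are at most $(\log X)^{1.8}$ bad pairs. That controls the average degree, not the maximum.
\item The stronger level $(\log X)^{0.8-\delta}(\log\log X)^{-1.18k}$ you then require is false. The diagonal terms alone exceed the threshold: with high probability $\var_X(Y_x)\gg(\log\log X)^{-1/2}>(\log\log X)^{-0.59}$ for every $x$ (Proposition~\ref{prop:B-variance} with fixed $W$). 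So the double sum is at least $J(\log\log X)^{-1.18k}$ with $J\asymp\log X$.
\item A union bound over $x$ loses the same factor $J$.
\end{enumerate}
(An average-degree bound would in fact suffice for the independent-set step in Section~\ref{sec:two-sided}, via Tur\'an's theorem, but it is not what the proposition asserts.)

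The missing idea is that no averaging in $x$ is needed. Expand $|\mathcal I(x,y)|^{2k}$, sum only over $y$, bound the geometric sum by $\min(\log X,\|\Xi\|_{\R/\Z}^{-1})$, and then take absolute values. The phases $x^{\pm iv_j}$ and $X^{-8i\Xi/7}e^{-i\theta\Xi}$ then disappear, and the majorant is a single random variable independent of $x$ and $\theta$. When $\|\Xi\|_{\R/\Z}\geq\log^{-2/3}X$ it is a pathwise bound in terms of $\max_N\int_N^{N+1}|F_X(1/2+it)|^2\,\mathrm dt$. Otherwise it is an $x$- and $\theta$-free barrier-restricted integral, to which Markov is applied once (Lemmas~\ref{lem:weighted-high-moment} and~\ref{lem:weighted-high-moment-simple}). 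This gives the maximum over $x$ and uniformity in $\theta$ at no cost.

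Your fallback of a union bound over $(\log X)^{10}$ discrete shifts cannot be absorbed. At $k\asymp\log\log X/\log\log\log X$ the Markov step gives exceptional probability only $(\log\log X)^{-2k}=(\log X)^{-1/2500}$. Taking $k$ much larger is not possible, because of the $(k^{29}(\log\log X)^{17})^{2k}$ term. And the final count $(\log X)^{0.70}$ leaves only a $(\log X)^{0.1}$ margin.

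\textbf{Gap 2: the barrier.} It cannot be an $\mathscr F_X$-measurable event $\Omega$ of small complement on which you then estimate the unrestricted $|\cov|^{2k}$. The saving in the many-small-gaps case comes from the strong barrier \eqref{eq:strong-barrier}. Its threshold $(\log X)e^{-j}(\log\log X)^{-1000}$ lies far below the typical maximum of $|F_{X,j}(1/2+it)|$ over a unit interval (heuristically about $(\log X)e^{-j}(\log\log X)^{-3/4}$). So $\mathcal A(t)$ fails for some $t$ with probability tending to one. The only barrier that holds globally with high probability is the weak one, $\mathcal D$. It costs a factor $(\log\log X)^{10}$ per small gap instead of saving $(\log\log X)^{-2000}$, and then the second term in \eqref{eq:weighted-2k} would not beat $(\log\log X)^{-1.18k}$.

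Following Harper, the paper instead inserts $\mathbf 1_{\mathcal A(v)}\mathbf 1_{\mathcal A(t)}$ pointwise inside the frequency integral for the covariance itself, before taking powers.
\begin{enumerate}
\item First it applies Perron's formula and restricts to the near-diagonal region $\Delta$, where $|m_{X,0}|\ll(\log\log\log X)^2$.
\item Lemma~\ref{lem:cov-barrier} then gives $\cov_X(Y_x,Y_y)=\frac1{4\pi^2}\mathcal I(x,y)+O((\log\log X)^{-0.6})$. The proof shows, via Multiplicative Chaos Result~2, that the $L^2$-mass of $F_X$ on $\{t:\mathcal D(t)\cap\mathcal A(t)^c\}$ is small. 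It then uses Cauchy--Schwarz with the $P_X$-kernel against the unrestricted mean square.
\item Finally one counts $y$ with $|\mathcal I(x,y)|\geq c(\log\log X)^{-0.59}$.
\end{enumerate}
So the quantity to control is this $L^2$-mass, not the probability of the barrier's complement. With these two changes, the rest of your outline matches the paper: the $(C\log\log\log X)^{8k}$ multiplier loss, the geometric factor, and Harper's gap decomposition.
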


We organise the proof in two stages. The first three lemmas form the
analytic reduction of the conditional covariance. We first use Perron's
formula to represent the covariance by a double integral, then
restrict this integral to the near-diagonal region, and finally insert the
barrier condition. The outcome is the approximation
\[
 \cov_X(Y_x,Y_y)
 =\frac{1}{4\pi^2}{\mathcal I}(x,y)
 +O((\log\log X)^{-0.6})
\]
outside a suitably small exceptional set.
The last two lemmas estimate ${\mathcal I}(x,y)$. 
The proposition then follows from the covariance representation under the
barrier condition and the uniform high moment bound by a counting argument.

Recall the definition of $m_{X,\sigma}(t)$ in Lemma \ref{lem:multiplier} and the definition of $ F_X(s)$ in \eqref{eq:def-squarefree-product}. For convenience, we also introduce the following shorthand throughout the proof.
\[
 T=X^{3/4},\quad K=(\log X)^{20},\quad
 d=\frac{(\log\log X)^{100}}{\log X},
 \quad
 P_X(u)\coloneqq\sum_{X<p\leq X^{4/3}}\frac1{p^{1+iu}}.
\]

\begin{lemma}[Perron formula and frequency truncation]
\label{lem:cov-perron}
Outside an event of probability $O(\log^{-1/2}X)$, we have,
simultaneously for every $0\leq\theta\leq2\pi$ and all
$x,y\in{\mathcal G}_X(\theta)$,
\begin{align}
 \cov_X(Y_x,Y_y)
 &=\frac1{4\pi^2}\int_{-K}^{K}\int_{-K}^{K}
 \frac{m_{X,0}(v)F_X(1/2+iv)x^{iv}}{1/2+iv}\notag\\
 &\qquad\times
 \frac{\overline{m_{X,0}(t)F_X(1/2+it)}y^{-it}}{1/2-it}
 P_X(v-t)\,\mathrm{d}v\,\mathrm{d}t+O(\log^{-3}X).
 \label{eq:compact-covariance}
\end{align}
\end{lemma}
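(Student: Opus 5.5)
We write $\cov_X(Y_x,Y_y)=(xy)^{-1/2}\sum_{X<p\le X^{4/3}}B_X(x/p)B_X(y/p)$ and represent each factor by Mellin inversion of \eqref{eq:B-mellin}. The plan has four steps: an exact Perron representation on a shifted line, truncation of the frequencies to $[-K,K]$, moving the line to $\Re s=1/2$, and controlling the resulting exceptional event.

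\textbf{Exact representation on a shifted line.} Since $B_X$ is a step function of polynomial growth, we first work on $\Re s=1/2+\sigma$ with $\sigma=1/\log X$, where everything converges. For $z$ not at a jump of $B_X$, Perron inversion gives
\[
B_X(z)=\frac1{2\pi}\int_{\R}\frac{m_{X,\sigma}(v)F_X(1/2+\sigma+iv)}{1/2+\sigma+iv}\,z^{1/2+\sigma+iv}\,\mathrm dv
\]
as a (conditionally convergent) limit. To avoid convergence issues, we smooth slightly, or use truncated Perron with an explicit error at height $T=X^{3/4}$.

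Inserting this for both $x/p$ and $y/p$, we obtain
\[
(xy)^{-1/2}B_X(x/p)B_X(y/p)=\frac1{4\pi^2}\iint\cdots\,x^{\sigma+iv}y^{\sigma-it}p^{-1-2\sigma-i(v-t)}\,\mathrm dv\,\mathrm dt .
\]
Summing over $X<p\le X^{4/3}$ then produces the prime sum $P_X$ at the shifted argument. The factor $x^\sigma y^\sigma p^{-2\sigma}$ is $1+O(\sigma\log X)$-bounded and is handled along with the shift below.

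\textbf{Frequency truncation and the error terms.} Using $T=X^{3/4}$ in truncated Perron costs $O(z^{1/2+\sigma}/T)$ times a divisor-type count. Since $z=x/p\le X^{1/3}/4$, this is tiny, provided the jumps at $p$ with $x/p$ near a quarter-integer are treated trivially (there are few such $p$). For $K<|v|\le T$ we bound the contribution in mean square. By Cauchy–Schwarz in $(v,t)$, together with $|P_X(u)|\ll 1/(1+|u|\log X)$ (from the prime number theorem) and Lemma~\ref{lem:multiplier}(2), the tail is at most
\[
\int_{|v|>K}\frac{|m F_X|^2}{|1/2+iv|^2}\,\mathrm dv
\]
times bounded factors. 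Its expectation is $\ll \log^3X/K$, since $\E|F_X|^2\ll\log X$ and $|m|^2\ll\log^2X$. As $K=\log^{20}X$, Markov's inequality gives an $O(\log^{-3}X)$ error outside an event of probability $O(\log^{-1/2}X)$.

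\textbf{Moving to $\Re s=1/2$ and uniformity.} We then move the line from $1/2+\sigma$ to $1/2$, which is permitted since everything is holomorphic for $\Re s>0$. On the compact box $[-K,K]^2$ the difference is controlled by the derivative in $\sigma$, and the horizontal segments at $\pm K$ are small by the same mean-square tail bound. The exceptional event depends only on integrals of $|F_X|^2$ over $|v|>K$ and not on $x,y,\theta$, so the estimate is simultaneous for all $\theta$ and all grid points. Only the tail integral needs to be bounded, and it is independent of $x,y$.

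\textbf{Main obstacle.} The hardest part is making the Perron truncation rigorous uniformly in $x/p$, because of the jump discontinuities of $B_X$ at quarter-integers. I would first try a smoothed Perron kernel, absorbing the smoothing error via the short-interval prime count (Huxley's theorem) and Lemma~\ref{lem:complete-moments} to bound $B_X$ near jumps. The resulting $(x/p)^{1/2}T^{-1}\log$ losses are negligible against $\log^{-3}X$.
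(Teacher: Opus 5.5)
There are two genuine gaps, both in the analytic reduction rather than in the probabilistic bookkeeping.

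\textbf{Perron on the wrong line.} You cannot run truncated Perron on $\Re s=1/2+\sigma$ with $\sigma=1/\log X$. The explicit remainder in Montgomery--Vaughan, Corollary~5.3, contains the global term $\frac{z^{\sigma_0}}{T}\sum_{P^+(n)\le X}|f(n)|n^{-\sigma_0}$. For $\sigma_0=1/2+1/\log X$ this sum equals $\prod_{p\le X}(1-p^{-\sigma_0})^{-1}=\exp\bigl(\gg\sqrt X/\log X\bigr)$. It is not a divisor-type count, and no height $T$ absorbs it. Exact inversion on that line is only conditionally convergent, so it does not support the absolute-value truncations you make afterwards.

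The paper instead proceeds as follows:
\begin{enumerate}
\item It applies truncated Perron on $\Re s=1$, where $\sum_{P^+(n)\le X}1/n\ll\log X$.
\item It bounds the remainders $E(x,p)$ collectively in $\ell^2$ over $p$, by substituting $m=pn$ and using that each $m<X^2$ has at most one prime factor exceeding $X$. This, not Huxley's theorem, is what disposes of the jumps of $B_X$.
\item It shifts to $\Re s=1/2$ across the rectangle of height $T$. The horizontal segments at $\pm T$ are controlled by $\E|F_X(\sigma\pm iT)Z_X(2\sigma\pm 2iT)|\ll\log^{3/2}X$ and Markov's inequality, which is exactly the source of the exceptional probability $O(\log^{-1/2}X)$ in the statement.
\end{enumerate}
Your final move from $1/2+\sigma$ to $1/2$ ``by the derivative in $\sigma$'' also gives no smallness. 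Both $x^{\sigma}y^{\sigma}p^{-2\sigma}$ and $F_X(1/2+\sigma+iv)/F_X(1/2+iv)$ change by factors that are not $1+o(1)$ over a shift of $1/\log X$. Only Cauchy's theorem with horizontal segments can be used there.

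\textbf{Truncation from $T$ to $K$.} This is the more serious gap. Your truncation from $|v|,|t|\le T$ to $[-K,K]^2$ rests on the pointwise bound $|P_X(u)|\ll 1/(1+|u|\log X)$ for all $|u|\le 2T=2X^{3/4}$, which you attribute to the prime number theorem. With the classical error term, the prime number theorem gives this only for $|u|\le e^{c\sqrt{\log X}}$; compare \eqref{eq:P-pointwise-far}, which the paper states only for $|u|\le 2K$. At $|u|\asymp X^{3/4}$ the bound is out of reach. With only the trivial bound $|P_X|\ll 1$, your Cauchy--Schwarz in $(v,t)$ with weight $|P_X(v-t)|$ loses a factor of order $T$. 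Lowering the Perron height is no escape, since the $\ell^2$-norm over $p$ of the Perron remainder forces $T$ to be a power of $X$.

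The missing ingredient is a mean value theorem for the prime polynomial. Harper's Number Theory Result~1 gives $\int_{-T}^{T}|P_X(v-t)|^2\,\mathrm{d}v\ll 1/\log X$ uniformly in $t$, valid because $X\ge T^{1.01}$. The paper first takes expectations, using $\E|F_X(1/2+iv)F_X(1/2+it)|\ll\log X$ and $|m_{X,0}|\ll\log X$. Only then does it apply Cauchy--Schwarz in $v$ for each fixed $t$ against $(1+|v|)^{-1}$. This bounds the expected contribution of $\max(|v|,|t|)\ge K$ by $\log^{5/2}X\,\log K/\sqrt K$, and Markov's inequality then gives the $O(\log^{-3}X)$ error.
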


\begin{proof}
Applying the truncated Perron formula
\cite[Theorem~5.2 and Corollary~5.3]{MontgomeryVaughan2007}
to the two partial sums in \eqref{eq:def-B}, we obtain
\[
 \frac{B_X(x/p)}{\sqrt x}
 =\frac1{2\pi i\sqrt x}
 \int_{1-iT}^{1+iT}
 \left(4^{s-1/2}-1\right)F_X(s)Z_X(2s)
 \frac{(x/p)^s}{s}\,\mathrm{d}s+E(x,p),
\]
where by the Corollary~5.3 of \cite{MontgomeryVaughan2007} with
$\sigma_0=1$, we get the error term is
\begin{align}
 E(x,p)\ll\frac1{\sqrt x}\sum_{a\in\{1,4\}}
 \Bigg(&
 \mathbf 1_{ax/p\in\mathbb Z}
 +\sum_{\substack{ax/(2p)<n<2ax/p\\ n\ne ax/p}}
 \min\bigg(1,\frac{ax}{T|ax-pn|}\bigg)
 +\frac{(1+ax/p)\log X}{T}
 \Bigg).
 \label{eq:perron-remainder}
\end{align}
The indicator accounts for the possible endpoint contribution.

Since the covariance is a quadratic sum over $p$, we do not have to need a 
pointwise estimate for $E(x,p)$. It is enough to obtain a power-saving
bound for its $\ell^2$-norm over the primes,
\begin{equation}\label{eq:perron-error-l2}
 \sum_{X<p\leq X^{4/3}}|E(x,p)|^2\ll X^{-c}
\end{equation}
for some absolute $c>0$. After the contour shift, we may combine this
with the corresponding $\ell^2$-bounds for the vertical and horizontal
contributions by Cauchy--Schwarz. We therefore estimate the
Perron remainders collectively over $p$. 

We fix $a\in\{1,4\}$ and make the change of variables $m=pn$
in the double sum above. Since
\(
 m<2ax\leq2X^{4/3}<X^2,
\)
each such $m$ has at most one prime divisor $p>X$. Hence
\begin{align*}
 &\sum_{X<p\leq X^{4/3}}
 \Bigg(
 \mathbf 1_{ax/p\in\mathbb Z}
 +\sum_{\substack{ax/(2p)<n<2ax/p\\ n\ne ax/p}}
 \min\bigg(1,\frac{ax}{T|ax-pn|}\bigg)
 \Bigg)\\
 &\qquad\qquad\ll
 1+\sum_{\substack{ax/2<m<2ax\\m\ne ax}}
 \min\bigg(1,\frac{ax}{T|ax-m|}\bigg)
 \ll 1+\frac{x\log X}{T}.
\end{align*}
The last inequality follows immediately by summing according to the distance
of $m$ from $ax$.
Using also the fact that
\(
 \#\{X<p\leq X^{4/3}\}\ll{X^{4/3}}/{\log X},
\) and \(
 \sum_{X<p\leq X^{4/3}}1/p\ll1,
\)
we obtain
\[
 \sum_{X<p\leq X^{4/3}}|E(x,p)|
 \ll
 \frac1{\sqrt x}
 \left(1+\frac{x\log X}{T}+\frac{X^{4/3}}{T}\right)
 \ll X^{1/20}.
\]
On the other hand, from the same harmonic-sum estimate applied pointwise,
together with $ax/p\ll X^{1/3}$, we obtain
\(
 \max_{X<p\leq X^{4/3}}|E(x,p)|
 \ll x^{-1/2}\ll X^{-1/2}.
\)
Thus \eqref{eq:perron-error-l2} follows.


We now move the contour to $\operatorname{Re}s=1/2$. No pole is crossed.
We first show that the two horizontal segments make a negligible
contribution. Since $Z_X$ is deterministic, by Cauchy--Schwarz and
independence we obtain, uniformly for $1/2\leq\sigma\leq1$,
\[
 \begin{aligned}
 \E|F_X(\sigma+iT)Z_X(2\sigma+2iT)|
 &\leq |Z_X(2\sigma+2iT)|
 \sqrt{\E|F_X(\sigma+iT)|^2}\\
 &\ll
 \prod_{p\leq X}(1-p^{-2\sigma})^{-1}
 \prod_{p\leq X}(1+p^{-2\sigma})^{1/2}
 \ll \log^{3/2}X.
 \end{aligned}
\]
Hence, by Markov's inequality,
\[
 \int_{1/2}^1
 |F_X(\sigma+iT)Z_X(2\sigma+2iT)|\,\mathrm{d}\sigma
 \leq\log^2X
\]
outside an event of probability $O(\log^{-1/2}X)$; the same conclusion
holds for the lower horizontal segment.
On this event, the absolute value of either horizontal contribution
corresponding to $(x,p)$ is
\[
 \begin{aligned}
 &\ll \frac1{\sqrt x}
 \int_{1/2}^1
 |F_X(\sigma\pm iT)Z_X(2\sigma\pm2iT)|
 \frac{(x/p)^\sigma}{|\sigma\pm iT|}
 \,\mathrm{d}\sigma\ll \frac{\log^2X}{T}
 \left(\frac1{\sqrt p}+\frac{\sqrt x}{p}\right),
 \end{aligned}
\]
where we used $|\sigma\pm iT|\asymp T$ and
\(
 \left({x}/{p}\right)^\sigma
 \ll \sqrt{x/p}+x/p \) if \(\frac12\leq\sigma\leq1.
\)
Consequently,
\[
 \begin{aligned}
 \sum_{X<p\leq X^{4/3}}
 \left[
 \frac{\log^2X}{T}
 \left(\frac1{\sqrt p}+\frac{\sqrt x}{p}\right)
 \right]^2
 &\ll
 \frac{\log^4X}{T^2}
 \left(1+\frac{x}{X}+\sqrt{\frac{x}{X}}\right)\ll X^{-c}
 \end{aligned}
\]
for some absolute $c>0$, since $T=X^{3/4}$ and
$x\leq X^{4/3}/4$. Thus the horizontal contributions have
power-saving $\ell^2$-norm over the primes and may later be absorbed by
Cauchy--Schwarz in the covariance calculation.

On the vertical segment, writing $s=1/2+iv$, the contribution to
$B_X(x/p)/\sqrt x$ is
\[
 \frac1{2\pi\sqrt p}
 \int_{-T}^{T}
 \frac{m_{X,0}(v)F_X(1/2+iv)x^{iv}p^{-iv}}
 {1/2+iv}\,\mathrm{d}v.
\]
Moreover, from the trivial multiplier bound, Cauchy--Schwarz, and
$\E|F_X(1/2+iv)|^2\ll\log X$, we obtain
\[
 \E\int_{-T}^{T}
 \frac{|m_{X,0}(v)F_X(1/2+iv)|}{1+|v|}\,\mathrm{d}v
 \ll\log^{5/2}X.
\]
Thus, outside an event of probability $O(\log^{-3/2}X)$, the last integral
is at most $\log^4X$. It follows that the squared sum over $p$ of the
vertical contributions is $O(\log^8X)$.

Combining this bound with \eqref{eq:perron-error-l2} and the preceding
bound for the horizontal contributions, Cauchy--Schwarz shows that every
covariance term containing either a Perron remainder or a horizontal
contribution is $O(X^{-c})$ for some $c>0$.
Consequently, outside an event of probability $O(\log^{-1/2}X)$, the
covariance is given, up to $O(X^{-c})$, by the product of the two vertical
integrals.

Since
\(
 F_X(1/2-it)=\overline{F_X(1/2+it)}
\)
and
\(
 m_{X,0}(-t)=\overline{m_{X,0}(t)},
\)
by changing $t$ to $-t$ in the second vertical integral and summing over
$p$, we obtain
\begin{align}
 \cov_X(Y_x,Y_y)
 &=\frac1{4\pi^2}
 \int_{-T}^{T}\int_{-T}^{T}
 \frac{m_{X,0}(v)F_X(1/2+iv)x^{iv}}
 {1/2+iv}\notag\\
 &\qquad\times
 \frac{\overline{m_{X,0}(t)F_X(1/2+it)}y^{-it}}
 {1/2-it}
 P_X(v-t)\,\mathrm{d}v\,\mathrm{d}t
 +O(X^{-c}).
 \label{eq:combined-perron}
\end{align}

It remains to truncate the frequencies. Applying Number Theory Result~1 of
\cite[Section~2.2]{Harper2023Large} with
$a_p=p^{it}/\log p$ for $X<p\leq X^{4/3}$ and $a_n=0$
otherwise, we obtain uniformly in $t$,
\[
 \int_{-T}^{T}|P_X(v-t)|^2\,\mathrm{d}v
 \ll
 \sum_{X<p\leq X^{4/3}}\frac1{p\log p}
 \ll\frac1{\log X}.
\]
Here the condition $p\geq T^{1.01}$ is satisfied since
$T=X^{3/4}$. Moreover, by Cauchy--Schwarz and independence, we have \(
 \E|F_X(1/2+iv)F_X(1/2+it)|\ll\log X,
\)
while Lemma~\ref{lem:multiplier} gives
\(
 |m_{X,0}(v)m_{X,0}(t)|\ll\log^2X.
\)

By symmetry, it is enough to consider the part of
\eqref{eq:combined-perron} where \(|v|\geq\max(K,|t|).
\)
Using the preceding bounds for the Euler products and the multiplier, the
expected absolute value of this contribution is
\[
 \ll \log^3X
 \int_{|t|\leq T}\frac{\mathrm{d}t}{1+|t|}
 \int_{\substack{\max(K,|t|)\leq|v|\leq T}}
 \frac{|P_X(v-t)|}{1+|v|}\,\mathrm{d}v.
\]
For fixed $t$, Cauchy--Schwarz and the mean value estimate for $P_X$ give
\[
 \begin{aligned}
 \int_{\substack{\max(K,|t|)\leq|v|\leq T}}
 \frac{|P_X(v-t)|}{1+|v|}\,\mathrm{d}v
 &\leq
 \left(\int_{-T}^{T}|P_X(v-t)|^2\,\mathrm{d}v\right)^{1/2}
 \left(\int_{|v|\geq\max(K,|t|)}
 \frac{\mathrm{d}v}{(1+|v|)^2}\right)^{1/2}\\
 &\ll
 \frac{1}{\sqrt{\log X}\sqrt{\max(K,|t|)}}.
 \end{aligned}
\]
Consequently the expected high frequency contribution is
\[
 \ll
 \log^{5/2}X
 \int_{\mathbb R}
 \frac{\mathrm{d}t}
 {(1+|t|)\sqrt{\max(K,|t|)}}
 \ll
 \frac{\log^{5/2}X\log K}{\sqrt K}.
\]
Here the last estimate follows by splitting the $t$-integral at $|t|=K$.
Since $K=(\log X)^{20}$, this is $O(\log^{-7}X)$. Markov's inequality
therefore shows that the range $\max(|v|,|t|)\geq K$ contributes
$O(\log^{-3}X)$ outside an event of probability $O(\log^{-4}X)$.

Combining this with \eqref{eq:combined-perron} proves
\eqref{eq:compact-covariance}, and hence the lemma.
\end{proof}

The next lemma isolates the narrow near-diagonal frequency range on which
the barrier argument will be applied.

\begin{lemma}[Near-diagonal reduction]
\label{lem:cov-near-diagonal}
Let
\[
 \Delta=\{(v,t):(\log\log X)^{-2}\leq |v|,|t|\leq(\log\log X)^2,
 \ |v-t|\leq d\}.
\]
Outside an event of probability $O((\log\log X)^{-1.2})$, uniformly
for every $0\leq\theta\leq2\pi$ and all
$x,y\in{\mathcal G}_X(\theta)$, the contribution  from
\(
 [-K,K]^2\setminus\Delta
\) to the double integral in
\eqref{eq:compact-covariance} is $O((\log\log X)^{-0.7})$.
\end{lemma}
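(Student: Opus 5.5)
The plan is to bound the contribution of $[-K,K]^2\setminus\Delta$ in absolute value. The point is that $|x^{iv}y^{-it}|=1$, so the resulting bound is uniform in $\theta,x,y$. It therefore suffices to control the random quantity
\[
 \mathcal R:=\iint_{[-K,K]^2\setminus\Delta}
 \frac{|m_{X,0}(v)F_X(1/2+iv)|\,|m_{X,0}(t)F_X(1/2+it)|}{|1/2+iv|\,|1/2-it|}\,|P_X(v-t)|\,\mathrm{d}v\,\mathrm{d}t .
\]
We need $\mathcal R\ll(\log\log X)^{-0.7}$ outside an event of probability $O((\log\log X)^{-1.2})$. Split the complement of $\Delta$ into three regions.
\begin{enumerate}
\item The off-diagonal region $|v-t|>d$.
\item The region where $|v-t|\le d$ and $\max(|v|,|t|)>(\log\log X)^2$.
\item The region where $|v-t|\le d$ and $\min(|v|,|t|)<(\log\log X)^{-2}$.
\end{enumerate}
We take first moments, or $q$-th moments with $q=2/3$ where needed, and then apply Markov's inequality.

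\textbf{Off-diagonal region.} Here use the decay of $P_X$. By the prime number theorem and partial summation,
\[
 P_X(u)=\int_{\log X}^{\frac43\log X}e^{-iuw}\frac{\mathrm{d}w}{w}+O(e^{-c\sqrt{\log X}})\ll\min\Bigl(1,\frac1{|u|\log X}\Bigr).
\]
For $|u|>d$ this is $\ll(\log\log X)^{-100}$. Combining this with $\E|F_X(1/2+iv)F_X(1/2+it)|\ll\log X$ and Lemma~\ref{lem:multiplier}(2) would lose a factor $\log X$, which is too much. Instead use the better moment: by Cauchy--Schwarz applied to the product $|F_X(v)||F_X(t)|$, the first term is
\[
 \iint \frac{|mF(v)||mF(t)|}{(1+|v|)(1+|t|)}|P_X(v-t)| \le \Bigl(\int\frac{|mF(v)|^2}{(1+|v|)^2}\Bigr)\cdot\sup_t\int\frac{|P_X(v-t)|\,\mathbf 1_{|v-t|>d}}{\cdots}
\]
after Schur's test. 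Schur's test with the kernel $|P_X(v-t)|\mathbf 1_{|v-t|>d}$ gives row integrals
\[
 \int_{d<|u|\le 2K}\min\Bigl(1,\frac{1}{|u|\log X}\Bigr)\mathrm{d}u\ll\frac{\log K+\log(1/d)}{\log X}\ll\frac{\log\log X}{\log X}.
\]
The remaining factor is $\int|m_{X,0}(v)F_X(1/2+iv)|^2(1+|v|)^{-2}\,\mathrm{d}v$. Its $q$-th moment is $\ll(\log X/\sqrt{\log\log X})^{q}$, by Multiplicative Chaos Result~3 on unit intervals together with Lemma~\ref{lem:multiplier}, exactly as in \eqref{eq:B-tail-moment}. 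The contribution is then $\ll (\log\log X)^{1/2}\cdot(\text{random }O(1))$, which is not small enough. So refine: the true decay is oscillatory, and we need more than absolute values. I would therefore use the $L^2$ Schur bound, $\|P_X\mathbf 1_{|u|>d}\|_{L^1}\ll d^{-1}\cdot$(it is) $\ll(\log\log X)/\log X$ more carefully, giving
\[
 \frac{\log(Kd^{-1}\cdot d\log X)}{\log X}\cdot\frac{\log X}{\sqrt{\log\log X}} .
\]
One needs $\int_{|u|>d}|P_X(u)|\,\mathrm{d}u$ bounded by $(\log X)^{-1}$ times a factor $\ll(\log\log X)^{-1}$. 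This does not come for free, which is why the main obstacle is here; see below.

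\textbf{Near-diagonal parts.} For $|v-t|\le d$ use $|P_X|\ll1$. Then Cauchy--Schwarz gives $|F(v)F(t)|\le\frac12(|F(v)|^2+|F(t)|^2)$, so the region costs a factor $d\cdot\int_{I}|mF(v)|^2(1+|v|)^{-2}\,\mathrm{d}v$, with $I$ the relevant $v$-range. For region 2, the tail $|v|>(\log\log X)^2$ has $q$-moment bounded by $\bigl(\log X\,(\log\log X)^{-2+o(1)}/\sqrt{\log\log X}\bigr)^q$. Multiplying by $d=(\log\log X)^{100}/\log X$ appears large. However $d\log X=(\log\log X)^{100}$ is itself large, so this bound fails too. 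This means the crude approach must keep the genuine size of the near-diagonal integral, which is $\frac{1}{\log X}$ times the mean of $|F|^2$, via $\int_{|u|\le d}|P_X(u)|\,\mathrm{d}u\ll 1/\log X$; the claim $|P_X(u)|\ll\min(1,1/(|u|\log X))$ gives $\ll\log(d\log X)/\log X=O(\log\log\log X/\log X)$. So in all regions I would use Schur's test with $\|P_X\|_{L^1(\text{range})}\ll\log\log X/\log X$, combined with the chaos bound $\log X/\sqrt{\log\log X}$, restricted to $|v|>(\log\log X)^2$, which gives the extra factor $(\log\log X)^{-2}$ from the kernel $(1+|v|)^{-2}$. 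For $|v|<(\log\log X)^{-2}$ the small interval gives the factor $(\log\log X)^{-2}$, with the $v\approx0$ singular behaviour handled using the barrier event $\Omega_X^{(0)}$ as in Lemma~\ref{lem:weighted-squarefree-energy}, since $m_{X,0}$ is bounded there.

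\textbf{Main obstacle.} The region $|v-t|>d$ with $|v|,|t|$ in the bulk range is the hard case: absolute values lose $\log\log X$ against the chaos saving $(\log\log X)^{-1/2}$. There I would use the refined mean value estimate (Number Theory Result~1 of \cite{Harper2023Large}) on $\int|P_X(u)|^2\mathbf 1_{|u|>d}\,\mathrm{d}u\ll 1/(d\log^2X)$, applied through Cauchy--Schwarz in the pair $(v,t)$. The probabilities then come from Markov's inequality with $q=2/3$ moments, adjusting thresholds to reach $(\log\log X)^{-1.2}$.
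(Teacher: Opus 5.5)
You handle the two near-diagonal pieces correctly after your self-correction, and in the same way as the paper. You keep absolute values, use $\E|F_X(1/2+iv)|^2\ll\log X$ against $\int_{|u|\le d}|P_X(u)|\,\mathrm{d}u\ll\log\log\log X/\log X$, gain $(\log\log X)^{-2}$ from $(1+|v|)^{-2}$ or from the short interval, and apply Markov to the first moment. No barrier is needed near $v=0$: the denominator there is $1/2+iv$, $m_{X,0}$ is bounded, and $\E|F_X(1/2+iv)|^2\asymp\log X$ uniformly.

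\textbf{The gap is the off-diagonal region $|v-t|>d$.} You flag it correctly but do not close it. Your proposed fix inserts the mean value bound for $P_X$ via Cauchy--Schwarz in $(v,t)$, and this fails. With $G(v)=m_{X,0}(v)F_X(1/2+iv)/(1/2+iv)$ it gives $\|G\|_2^2\bigl(\iint|P_X(v-t)|^2\mathbf 1_{|v-t|>d}\bigr)^{1/2}$. Already on a unit box this is of size $\frac{\log X}{\sqrt{\log\log X}}\cdot\frac{1}{\sqrt d\,\log X}=\sqrt{\log X}\,(\log\log X)^{-50.5}$, which is worse than your Schur bound.

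More fundamentally, no deterministic inequality that sees $F_X$ only through local $L^2$ norms of $G$ can beat Schur's bound $\|G\|_2^2\,\|P_X\mathbf 1_{|u|>d}\|_{L^1}\approx\sqrt{\log\log X}$. A $G$ that is roughly constant on unit intervals, with the same $L^2$ masses, nearly saturates it. Oscillation is also not the point: the paper keeps absolute values throughout.

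\textbf{The missing idea is decorrelation of $|F_X|$ at separated frequencies.} When $|v\mp t|$ is large, $|F_X(1/2+iv)|$ and $|F_X(1/2+it)|$ behave almost independently. The expectation of their product is then of order $\sqrt{\log X}$, roughly $(\E|F_X|)^2$, rather than $\log X$. The paper imports this as Harper's Euler Product Result~2,
\[
 \E|F_X(\tfrac12+iv)F_X(\tfrac12+it)|\ll\sqrt{\log X}\Bigl(1+\min\bigl(\sqrt{\log X},|v-t|^{-1/2}\bigr)+\min\bigl(\sqrt{\log X},|v+t|^{-1/2}\bigr)+\log(2+|v|+|t|)\Bigr).
\]
It then bounds the first moment of the off-diagonal piece directly, using $|P_X(u)|\ll(|u|\log X)^{-1}$ for $|u|>d$ and $|m_{X,0}|\ll(\log\log X)^2$ on $[-K,K]$.

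Integrating the $|v-t|^{-3/2}$ singularity from $d$ produces $d^{-1/2}=\sqrt{\log X}\,(\log\log X)^{-50}$; this is what the choice of $d$ is tuned for. The total expectation is
\[
 \ll(\log\log X)^4(\log X)^{-1/2}\bigl(d^{-1/2}+\log\log X+\log(2/d)\bigr)\ll(\log\log X)^{-46},
\]
after which Markov suffices.

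Note also the term singular at $v=-t$. Since $F_X(1/2-iv)=\overline{F_X(1/2+iv)}$ for Rademacher coefficients, $|F_X|$ at $v$ and at $-v$ are perfectly correlated, and this anti-diagonal lies inside $|v-t|>d$. It needs separate treatment: the paper changes variables to $u=v-t$, $s=v+t$ and obtains a contribution $O(\log(2/d))$. Your proposal does not address this configuration at all.
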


\begin{proof}
We first discard the off-diagonal range $|v-t|>d$. After removing the
finitely many primes below $200$, Euler Product Result~2 of
\cite[Section~2.3]{Harper2023Large} shows that, uniformly for
$|v|,|t|\leq K$,
\begin{align}
 \E\big|F_X(1/2+iv)F_X(1/2+it)\big|
 &\ll \sqrt{\log X}\bigg(
 1+\min\left(\sqrt{\log X},|v-t|^{-1/2}\right)\notag\\
 &+\min\left(\sqrt{\log X},|v+t|^{-1/2}\right)
 +\log(2+|v|+|t|)\bigg).
 \label{eq:two-point-R-far}
\end{align}
At $v=t$ or $v=-t$, the second entry in the corresponding minimum is
interpreted as $+\infty$. 

On the other hand, Number Theory Result~2 of
\cite[Section~2.2]{Harper2023Large}, combined with the trivial bound
$|P_X(u)|\ll1$, gives
\begin{equation}\label{eq:P-pointwise-far}
 |P_X(u)|\ll
 \min\left(1,\frac1{|u|\log X}\right)
 +O\left((1+K)e^{-c\sqrt{\log X}}\right)
 \qquad (|u|\leq2K).
\end{equation}
Since $d>1/\log X$, on the range $|v-t|>d$ we have
\(
 |P_X(v-t)|\ll ({|v-t|\log X})^{-1},
\)
up to the exponentially small error.
Lemma~\ref{lem:multiplier}, \eqref{eq:two-point-R-far}, and
\eqref{eq:P-pointwise-far} show that the expected absolute value of the
part with $|v-t|>d$ is
\begin{align}
 \ll \frac{(\log\log X)^4}{\sqrt{\log X}}
 &\iint_{\substack{|v|,|t|\leq K\\ |v-t|>d}}
 \frac{1}{(1+|v|)(1+|t|)|v-t|}
 \bigg(
 1+\min\left(\sqrt{\log X},|v-t|^{-1/2}\right)\notag\\
 &\qquad\ \ +\min\left(\sqrt{\log X},|v+t|^{-1/2}\right)
 +\log(2+|v|+|t|)\bigg)
 \,\mathrm{d}v\,\mathrm{d}t .
 \label{eq:far-expectation-expanded}
\end{align}
Here $(\log\log X)^4$ comes from the two multiplier bounds
$|m_{X,0}(v)|,|m_{X,0}(t)|\ll(\log\log X)^2$ on this range.
To estimate the regular terms, change variables $u=v-t$ and use
\[
 \int_{\mathbb R}
 \frac{\mathrm{d}r}{(1+|r|)(1+|r-u|)}
 \ll\frac{\log(2+|u|)}{1+|u|}.
\]
It follows that the terms
$1+\log(2+|v|+|t|)$ in
\eqref{eq:far-expectation-expanded} contribute $O(\log\log X)$ to the double integral, while
the singularity at $v-t=0$ contributes
\begin{equation}\label{eq:difference-singularity}
 \ll \int_d^1\frac{\mathrm{d}u}{u^{3/2}}+\log\log X
 \ll d^{-1/2}+\log\log X.
\end{equation}
For the term singular at $v+t=0$, we make the change
of variables \( u=v-t \) and \(s=v+t.
\)
Up to absolute constants arising from the Jacobian and the rescaling of
the two denominator factors, the relevant contribution is bounded by
\begin{align}
 \int_{d<|u|\leq2K}\frac{\mathrm{d}u}{|u|}
 \int_{|s|\leq2K}
 \frac{\min\left(\sqrt{\log X},|s|^{-1/2}\right)}
 {(1+|s+u|)(1+|s-u|)}\,\mathrm{d}s
 \ll\log(2/d).
 \label{eq:rad-sum-kernel}
\end{align}
The inner integral is $O(1)$ for $|u|\leq2$, while for
$|u|>2$ it is
$O((1+\log|u|)|u|^{-3/2})$ after splitting near $s=\pm u$ and on the
complement. 

Combining the three estimates, we have
\[
\eqref{eq:far-expectation-expanded} \ll
 \frac{(\log\log X)^4}{\sqrt{\log X}}
 \left(d^{-1/2}+\log\log X+\log(2/d)\right)
 \ll(\log\log X)^{-46}.
\]
Markov's inequality
therefore shows that the region $|v-t|>d$ may be discarded with error
$O((\log\log X)^{-20})$ outside an event of probability
$O((\log\log X)^{-26})$.

We are now left with $|v|,|t|\leq K$ and $|v-t|\leq d$.  We next
discard the ranges where
$\max(|v|,|t|)\geq(\log\log X)^2$ or
$\min(|v|,|t|)\leq(\log\log X)^{-2}$, leaving precisely the region
$\Delta$.
We shall use the $L^1$-norm of $P_X(u)$ 
on the near-diagonal
range.  From \eqref{eq:P-pointwise-far},
\begin{align}
 \int_{|u|\leq d}|P_X(u)|\,\mathrm{d}u
 &\ll
 \int_{|u|\leq1/\log X}\mathrm{d}u
 +\frac1{\log X}
 \int_{1/\log X<|u|\leq d}\frac{\mathrm{d}u}{|u|}\notag\\
 &\ll
 \frac{1+\log(d\log X)}{\log X}
 \ll\frac{\log\log\log X}{\log X}.
 \label{eq:near-kernel-mass}
\end{align}
Moreover, Cauchy--Schwarz in probability gives, uniformly in $v$ and $t$,
\[
 \E|F_X(1/2+iv)F_X(1/2+it)|\ll\log X.
\]
Thus the factor $\log X$ from the Euler product moment is cancelled by
the factor $1/\log X$ in \eqref{eq:near-kernel-mass}.

Consider first the range
\(
 \max(|v|,|t|)\geq(\log\log X)^2.
\)
Since $d=o((\log\log X)^2)$, one has $|v|\asymp|t|$ there.  Using
Lemma~\ref{lem:multiplier} and integrating first in $u=v-t$, the expected
absolute value of the contribution of this range to
\eqref{eq:compact-covariance} is
\[
 \ll
 (\log\log\log X)
 \int_{|r|\geq(\log\log X)^2/2}
 \frac{(1+\log(2+|r|))^4}{(1+|r|)^2}\,\mathrm{d}r
 \ll
 \frac{(\log\log\log X)^5}{(\log\log X)^2}.
\]
Here the factor $\log\log\log X$ comes from
\eqref{eq:near-kernel-mass}, while the remaining integral accounts for
the two multiplier factors and the two Mellin denominators.

Next consider
\( \min(|v|,|t|)\leq(\log\log X)^{-2}.
\)
Since $d=o((\log\log X)^{-2})$, both $v$ and $t$ are
$O((\log\log X)^{-2})$.  Lemma~\ref{lem:multiplier} then gives bounded
multipliers, and the remaining frequency interval has length
$O((\log\log X)^{-2})$.  Using \eqref{eq:near-kernel-mass} again, the
expected absolute value of this contribution is therefore
\(
 \ll
 {\log\log\log X}/{(\log\log X)^2}.
\)
Thus the total expected contribution from these two frequency ranges is
\(
 \ll
 {(\log\log\log X)^5}/{(\log\log X)^2}.
\)
Markov's inequality shows that they may be discarded with total error
$O((\log\log X)^{-0.7})$ outside an event of probability
$O((\log\log X)^{-1.2})$.

Together with the preceding removal of the region $|v-t|>d$, this
restricts the integral in \eqref{eq:compact-covariance} to $\Delta$ with
the claimed error and exceptional probability.  After taking absolute
values, the phase factors $x^{iv}$ and $y^{-it}$ disappear, so all the
majorants above are independent of $x$, $y$, and the common shift
$\theta$.  Hence the same exceptional event works uniformly for all the
grids under consideration.  
\end{proof}

We now insert the same strong barrier condition as in
\cite[Section~3.3]{Harper2023Large}.  For integers $0\leq j\leq\lfloor\log\log X\rfloor$, write
\[
 F_{X,j}(s)\coloneqq\prod_{p\leq X^{e^{-j}}}(1+f(p)p^{-s}).
\]
Following \cite[Section~2.4]{Harper2023Large}, for each $t\in\mathbb R$
we define a sequence of discretisations $t(j)$ by setting $t(-1)=t$ and,
for $0\leq j\leq\lfloor\log\log X\rfloor-1$,
\[
 t(j)\coloneqq
 \max\left\{
 u\leq t(j-1):
 u=\frac{n}{
 ((\log X)/e^j)\log((\log X)/e^j)}
 \ \text{for some }n\in\mathbb Z
 \right\}.
\]

For $t\in\mathbb R$, let ${\mathcal A}(t)$ be the event that, for every
integer $0\leq j\leq\lfloor\log\log X\rfloor-1$,
\begin{equation}\label{eq:strong-barrier}
 |F_{X,j}(1/2+it)|
 \leq
 \begin{cases}
 \displaystyle
 \frac{\log X}{e^j(\log\log X)^{1000}},
 & 0\leq j\leq\lfloor0.99\log\log X\rfloor,\\[2mm]
 \displaystyle
 \frac{\log X}{e^j}(\log\log X)^6,
 & \lfloor0.99\log\log X\rfloor<j
 \leq\lfloor\log\log X\rfloor-1.
 \end{cases}
\end{equation}
Define
\begin{equation}\label{eq:weighted-main-integral}
 {\mathcal I}(x,y)\coloneqq\iint_\Delta
 \frac{\mathbf 1_{{\mathcal A}(v)}m_{X,0}(v)F_X(1/2+iv)x^{iv}}{1/2+iv}
 \frac{\mathbf 1_{{\mathcal A}(t)}
 \overline{m_{X,0}(t)F_X(1/2+it)}y^{-it}}{1/2-it}
 P_X(v-t)\,\mathrm{d}v\,\mathrm{d}t.
\end{equation}

\begin{lemma}[Inserting the barrier condition]
\label{lem:cov-barrier}
Outside an event of probability $O((\log\log X)^{-0.1})$, we have,
simultaneously for every $0\leq\theta\leq2\pi$ and all
$x,y\in{\mathcal G}_X(\theta)$,
\[
 \cov_X(Y_x,Y_y)=\frac1{4\pi^2}{\mathcal I}(x,y)
 +O((\log\log X)^{-0.6}).
\]
\end{lemma}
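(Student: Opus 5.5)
By Lemmas~\ref{lem:cov-perron} and~\ref{lem:cov-near-diagonal}, outside an event of probability $O((\log\log X)^{-1.2})$, the covariance equals $\frac{1}{4\pi^2}$ times the $\Delta$-part of the double integral in \eqref{eq:compact-covariance}, up to an error $O((\log\log X)^{-0.7})$. It therefore remains to show that inserting the indicators $\mathbf 1_{\mathcal A(v)}\mathbf 1_{\mathcal A(t)}$ changes this $\Delta$-integral by at most $O((\log\log X)^{-0.6})$, outside an event of probability $O((\log\log X)^{-0.1})$.

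The difference is bounded by the same integrand with $1-\mathbf 1_{\mathcal A(v)}\mathbf 1_{\mathcal A(t)}\leq \mathbf 1_{\mathcal A(v)^c}+\mathbf 1_{\mathcal A(t)^c}$ inserted. By symmetry we treat $\mathbf 1_{\mathcal A(v)^c}$. Taking absolute values removes the phases $x^{iv}y^{-it}$, so the majorant is uniform in $\theta,x,y$. On $\Delta$, Lemma~\ref{lem:multiplier} gives $|m_{X,0}(v)m_{X,0}(t)|\ll(\log\log\log X)^4$, and the Mellin denominators are bounded. The integral of $|P_X(v-t)|$ over $|v-t|\leq d$ is controlled by \eqref{eq:near-kernel-mass}. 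The expected size of the error is thus
\[
 \ll(\log\log\log X)^5\,\frac{1}{\log X}\sup_{(v,t)\in\Delta}\E\big[\mathbf 1_{\mathcal A(v)^c}|F_X(1/2+iv)F_X(1/2+it)|\big]\,(\log\log X)^2 .
\]
Here the final factor is the length of the $v$-range. By Markov's inequality, it therefore suffices to prove
\[
\E\big[\mathbf 1_{\mathcal A(v)^c}|F_X(1/2+iv)||F_X(1/2+it)|\big]\ll \log X\,(\log\log X)^{-3}
\]
uniformly for $(v,t)\in\Delta$.

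This estimate is the main obstacle, and it is a two-point version of Harper's barrier lemma. I would follow \cite[Sections~3.3--3.4]{Harper2023Large}. Decompose $\mathcal A(v)^c$ according to the first index $j$ at which \eqref{eq:strong-barrier} fails, and write $F_X=F_{X,j}\cdot(F_X/F_{X,j})$ at both $v$ and $t$. The barrier at $t(j)$ rather than $t$ is handled by the discretisation, as in Harper. Bound the indicator of failure at level $j$ by $(|F_{X,j}(1/2+iv)|/\text{threshold})^{2}$, or by a slightly higher power. Then use independence of primes below and above $X^{e^{-j}}$ to factor the expectation. The resulting two-point Euler product moments at nearby frequencies with $|v-t|\leq d$ are estimated by Euler Product Result~1/2 of \cite{Harper2023Large}, with a Girsanov-type tilt. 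Because $|v-t|\le d$ and $|v|,|t|\geq(\log\log X)^{-2}$, the products at $v,t$ are essentially identical for primes up to $\exp(1/d)$, and $v\pm t$ avoids the $t=0$ singularity up to polylog losses.

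The threshold $\log X/(e^j(\log\log X)^{1000})$ for $j\le0.99\log\log X$ is the key quantitative input. A random walk barrier (ballot) estimate shows that the tilted measure exceeds it with probability giving the large power-saving. In the top range of $j$, the generous threshold $(\log\log X)^6$ combined with the short remaining walk gives a saving of $(\log\log X)^{-3}$ or better. Summing over $j$ then gives the required bound. If the exact $(\log\log X)^{-3}$ saving is delicate near $v\approx-t$, I would fall back on Cauchy--Schwarz,
\[
\E\big[\mathbf 1_{\mathcal A(v)^c}|F_X(1/2+iv)||F_X(1/2+it)|\big]\leq \big(\E\,\mathbf 1_{\mathcal A(v)^c}|F_X(1/2+iv)|^2\big)^{1/2}\big(\E|F_X(1/2+it)|^2\big)^{1/2},
\]
and quote Harper's one-point barrier estimate $\E\,\mathbf 1_{\mathcal A(v)^c}|F_X(1/2+iv)|^2\ll\log X(\log\log X)^{-c}$ for a suitable large $c$.

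Finally, Markov's inequality with the above bound gives exceptional probability $O((\log\log X)^{-0.1})$ for an error exceeding $(\log\log X)^{-0.6}$. Combined with the two preceding lemmas, this completes the argument.
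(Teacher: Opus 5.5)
\textbf{There is a genuine gap: the first-moment reduction cannot work.}

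The estimate you reduce to,
$\E[\mathbf 1_{\mathcal A(v)^c}|F_X(1/2+iv)||F_X(1/2+it)|]\ll\log X(\log\log X)^{-3}$,
is already false at $v=t\in\Delta$. So is the one-point bound in your fallback.

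\emph{Why it fails.} Under the measure tilted by $|F_X(1/2+iv)|^2$, $\log|F_X(1/2+iv)|$ is centred at $\log\log X+O(\log\log\log X)$ with spread $\asymp\sqrt{\log\log X}$. Hence the $j=0$ condition $|F_X(1/2+iv)|\le\log X(\log\log X)^{-1000}$ of \eqref{eq:strong-barrier} alone fails with tilted probability about $1/2$. By the ballot theorem, $\mathcal A(v)$ has tilted probability $o(1)$. Therefore
$\E[\mathbf 1_{\mathcal A(v)^c}|F_X(1/2+iv)|^2]\asymp\E|F_X(1/2+iv)|^2\asymp\log X$:
the $|F_X|^2$-mass lives on barrier-violating realisations. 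This means:
\begin{itemize}
\item your ballot heuristic ("the tilted measure exceeds it with probability giving the large power-saving") has the direction reversed;
\item dominating the failure indicator by $(|F_{X,j}(1/2+iv)|/\tau_j)^2$, with $\tau_j$ the threshold, only loses, since exceeding $\tau_j$ is typical.
\end{itemize}
Worse, take the strip $|v-t|\le c/\log X$ with $1/3\le|v|,|t|\le1/2$. There $|P_X(v-t)|\asymp1$, $|m_{X,0}|\asymp1$, and the two-point moment is comparable to the one-point one. So the \emph{expected} barrier-failure term is $\gg1$. No Markov argument applied to its mean can give $O((\log\log X)^{-0.6})$; the saving exists only in probability.

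\textbf{What the paper uses instead.} It gets the saving from two inputs missing from your plan.
\begin{enumerate}
\item \emph{A weaker barrier.} It intersects with the weaker discretised barrier $\mathcal D$ (level $(\log X)e^{-j}(\log\log X)^5$ at the points $t(j)$). By Multiplicative Chaos Result~1, $\mathcal D$ holds for all $|t|\le(\log\log X)^2$ outside probability $O((\log\log X)^{-4})$. On $\mathcal D$ one may replace $\mathbf 1_{\mathcal A(v)^c}$ by $\mathbf 1_{\mathcal D(v)}\mathbf 1_{\mathcal A(v)^c}$. Multiplicative Chaos Result~2 then gives \eqref{eq:pointwise-A-failure}, a saving of only about $1/\log\log X$, not $(\log\log X)^{-3}$.
\item \emph{Low moments for the companion factor.} The factor $|F_X(1/2+it)|$ has no saving in mean. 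So the paper separates the variables by the Schur-type bound \eqref{eq:CSinq}, keeping the Mellin denominators rather than paying $(\log\log X)^2$ for the length of the range. The unrestricted integral is then bounded in probability by \eqref{eq:unrestricted-energy}, using the $q=9/10$ case of Multiplicative Chaos Result~3.
\end{enumerate}
The product of the two bounds is $(\log\log\log X)^{8.5}(\log\log X)^{-0.625}$.

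Both inputs matter quantitatively. If the second factor is controlled only through its mean, you reach at best about $(\log\log X)^{-1/2}$. That does not beat the threshold $(\log\log X)^{-0.59}$ used in Proposition~\ref{prop:weighted-covariance}.
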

\begin{proof}
We first introduce the weaker discretised barrier needed to apply the
multiplicative-chaos estimates.  For $t\in\mathbb R$, let
${\mathcal D}(t)$ be the event
\[
 |F_{X,j}(1/2+it(j))|
 \leq(\log X)e^{-j}(\log\log X)^5
 \qquad
 (0\leq j\leq\lfloor\log\log X\rfloor-1),
\]
where $t(j)$ is the discretisation defined above.  Let ${\mathcal D}$ be
the event that ${\mathcal D}(t)$ holds for every
$|t|\leq(\log\log X)^2$.

Applying Multiplicative Chaos Result~1 of
\cite[Section~2.4]{Harper2023Large} with the choice
\(
 e^W=(\log\log X)^3,
\)
the upper barrier is at most
$(\log X)e^{-j}(\log\log X)^5$, and the exceptional probability is
$O((\log\log X)^{-6})$.  Hence, for any fixed $N\in\mathbb R$,
${\mathcal D}(t)$ holds for every $|t-N|\leq1/2$ outside such an
exceptional event.  A union bound over
$O((\log\log X)^2)$ intervals of length 1 covering
$[-(\log\log X)^2,(\log\log X)^2]$ therefore yields
\[
 \p({\mathcal D}^c)\ll(\log\log X)^{-4}.
\]
On ${\mathcal D}$, the events ${\mathcal D}(v)$ and ${\mathcal D}(t)$
hold throughout $\Delta$, so their indicators may be inserted without
any error.

We now pass to the strong pointwise barrier ${\mathcal A}(t)$.  The range
\(
 (\log\log X)^{-2}\leq|t|\leq(\log\log X)^2
\)
lies within the  range of Multiplicative Chaos Result~2 of
\cite[Section~2.4]{Harper2023Large}.  Hence, uniformly in this range,
\begin{equation}\label{eq:pointwise-A-failure}
 \E\left(
 |F_X(1/2+it)|^2
 \mathbf 1_{{\mathcal D}(t)}
 \mathbf 1_{{\mathcal A}(t)^c}
 \right)
 \ll
 \frac{\log X(\log\log\log X)^7}{\log\log X}.
\end{equation}
Integrating against $|1/2+it|^{-2}$ and applying Markov's inequality,
we obtain, outside an event of probability
$O((\log\log X)^{-0.1})$,
\begin{equation}\label{eq:A-failure-energy}
 \int_{(\log\log X)^{-2}\leq|t|\leq(\log\log X)^2}
 \frac{
 \mathbf 1_{{\mathcal D}(t)}
 \mathbf 1_{{\mathcal A}(t)^c}
 |F_X(1/2+it)|^2}
 {|1/2+it|^2}\,\mathrm{d}t
 \ll
 \frac{\log X(\log\log\log X)^7}
 {(\log\log X)^{0.9}}.
\end{equation}

We also need an unrestricted mean square bound for the Euler product.
Take $q=9/10$ in Multiplicative Chaos Result~3 of
\cite[Section~2.4]{Harper2023Large}.  Decomposing the $t$-range into
intervals $[N-1/2,N+1/2]$ and using $(a+b)^q\leq a^q+b^q$, we obtain
\[
\begin{aligned}
 &\E\left(
 \int_{-(\log\log X)^2}^{(\log\log X)^2}
 \frac{|F_X(1/2+it)|^2}{|1/2+it|^2}\,\mathrm{d}t
 \right)^q\\
 &\qquad\ll
 \sum_{|N|\leq(\log\log X)^2+1}
 \frac1{(1+N^2)^q}
 \E\left(
 \int_{N-1/2}^{N+1/2}
 |F_X(1/2+it)|^2\,\mathrm{d}t
 \right)^q\\
 &\qquad\ll
 \left(\frac{\log X}{\sqrt{\log\log X}}\right)^q
 \sum_{N\in\mathbb Z}
 \frac{(\log\log(|N|+10))^q}{(1+N^2)^q}\ll
 \left(\frac{\log X}{\sqrt{\log\log X}}\right)^q.
\end{aligned}
\]
Here the last series converges since $2q=9/5>1$.
Therefore, by Markov's inequality,
\[
\begin{aligned}
 &\p\left(
 \int_{-(\log\log X)^2}^{(\log\log X)^2}
 \frac{|F_X(1/2+it)|^2}{|1/2+it|^2}\,\mathrm{d}t
 >
 \frac{\log X}{(\log\log X)^{0.35}}
 \right)\\
 &\qquad\ll
 \left(\frac{\log X}{\sqrt{\log\log X}}\right)^q
 \left(\frac{(\log\log X)^{0.35}}{\log X}\right)^q
 =
 (\log\log X)^{-0.135}.
\end{aligned}
\]
Thus, outside an event of probability
$O((\log\log X)^{-0.135})$,
\begin{equation}\label{eq:unrestricted-energy}
 \int_{-(\log\log X)^2}^{(\log\log X)^2}
 \frac{|F_X(1/2+it)|^2}{|1/2+it|^2}\,\mathrm{d}t
 \ll
 \frac{\log X}{(\log\log X)^{0.35}}.
\end{equation}

It remains to estimate the change in the near-diagonal integral caused by
inserting the strong barrier.  By \eqref{eq:near-kernel-mass},
\[
 \sup_v\int_{\{t:(v,t)\in\Delta\}}|P_X(v-t)|\,\mathrm{d}t
 +
 \sup_t\int_{\{v:(v,t)\in\Delta\}}|P_X(v-t)|\,\mathrm{d}v
 \ll
 \frac{\log\log\log X}{\log X}.
\]
Then for any square-integrable functions $F$ and $G$,
Cauchy--Schwarz gives us
 \begin{align}
 &\iint_{\Delta}
 |P_X(v-t)|\,|F(v)G(t)|\,\mathrm{d}v\,\mathrm{d}t\notag\\
 &\qquad\leq
 \left(
 \iint_{\Delta}
 |P_X(v-t)|\,|F(v)|^2\,\mathrm{d}v\,\mathrm{d}t
 \right)^{1/2}
 \left(
 \iint_{\Delta}
 |P_X(v-t)|\,|G(t)|^2\,\mathrm{d}v\,\mathrm{d}t
 \right)^{1/2}\notag\\
 &\qquad\ll
 \frac{\log\log\log X}{\log X}
 \|F\|_2\|G\|_2.\label{eq:CSinq}
 \end{align}

For all $v,t\in\mathbb R$, we have the elementary inequality
\[
 1-\mathbf 1_{{\mathcal A}(v)}
      \mathbf 1_{{\mathcal A}(t)}
 \leq
 \mathbf 1_{{\mathcal A}(v)^c}
 +
 \mathbf 1_{{\mathcal A}(t)^c}.
\]
On the event ${\mathcal D}$, we have throughout $\Delta$, \(
 \mathbf 1_{{\mathcal A}(v)^c}
 =
 \mathbf 1_{{\mathcal D}(v)}
 \mathbf 1_{{\mathcal A}(v)^c},\) \( \mathbf 1_{{\mathcal A}(t)^c}
 =
 \mathbf 1_{{\mathcal D}(t)}
 \mathbf 1_{{\mathcal A}(t)^c}.
\)
Hence, after taking absolute values, the difference between near-diagonal integral (i.e., the double
integral in \eqref{eq:compact-covariance} restricted to $\Delta$) and
${\mathcal I}(x,y)$ is bounded by the sum of two terms, the first of
which is
\[
 \begin{aligned}
 \iint_{\Delta}
 &|P_X(v-t)|\,|m_{X,0}(v)m_{X,0}(t)|
 \frac{
 \mathbf 1_{{\mathcal D}(v)}
 \mathbf 1_{{\mathcal A}(v)^c}
 |F_X(1/2+iv)|
 }{|1/2+iv|}
 \frac{|F_X(1/2+it)|}{|1/2-it|}
 \,\mathrm{d}v\,\mathrm{d}t,
 \end{aligned}
\]
By Lemma~\ref{lem:multiplier}, throughout $\Delta$, we have
\(
 |m_{X,0}(v)|,\ |m_{X,0}(t)|
 \ll(\log\log\log X)^2.
\)
Together with \eqref{eq:CSinq}, the displayed integral above is therefore
\[
 \begin{aligned}
 &\ll
 \frac{(\log\log\log X)^5}{\log X}
 \left(
 \int_{(\log\log X)^{-2}\leq|v|\leq(\log\log X)^2}
 \frac{
 \mathbf 1_{{\mathcal D}(v)}
 \mathbf 1_{{\mathcal A}(v)^c}
 |F_X(1/2+iv)|^2}
 {|1/2+iv|^2}\,\mathrm{d}v
 \right)^{1/2}\\
 &\qquad\qquad\times
 \left(
 \int_{-(\log\log X)^2}^{(\log\log X)^2}
 \frac{|F_X(1/2+it)|^2}
 {|1/2+it|^2}\,\mathrm{d}t
 \right)^{1/2}\\
 &\ll
 \frac{(\log\log\log X)^5}{\log X}
 \left(
 \frac{\log X(\log\log\log X)^7}
      {(\log\log X)^{0.9}}
 \right)^{1/2}
 \left(
 \frac{\log X}
      {(\log\log X)^{0.35}}
 \right)^{1/2}\ll
 (\log\log X)^{-0.6}.
 \end{aligned}
\]
Here in the last  line we have used \eqref{eq:A-failure-energy} and
\eqref{eq:unrestricted-energy}. The same bound holds for the second term in which ${\mathcal A}(t)$ fails.

Thus, outside an event of probability
$O((\log\log X)^{-0.1})$, inserting
$\mathbf 1_{{\mathcal A}(v)}\mathbf 1_{{\mathcal A}(t)}$ into the
near-diagonal integral changes it by
$O((\log\log X)^{-0.6})$.

Combining this with Lemmas~\ref{lem:cov-perron} and
\ref{lem:cov-near-diagonal}, we obtain
\[
 \cov_X(Y_x,Y_y)
 =
 \frac1{4\pi^2}{\mathcal I}(x,y)
 +O((\log\log X)^{-0.6}).
\]
The union of the exceptional events arising above and in
Lemmas~\ref{lem:cov-perron} and \ref{lem:cov-near-diagonal}
has probability $O((\log\log X)^{-0.1})$.

Finally, all these exceptional events depend only on the variables
$(f(p))_{p\leq X}$.  The dependence on $x$ and $y$ in the integral occurs
only through the factors $x^{iv}$ and $y^{-it}$, both of modulus one, so
all the estimates are uniform in $x$, $y$, and the common shift
$\theta$.  Hence the same exceptional event works simultaneously for
every $0\leq\theta\leq2\pi$ and all
$x,y\in{\mathcal G}_X(\theta)$.
\end{proof}

We next adapt Harper's high moment argument
\cite[Proposition~3 and Section~3.4]{Harper2023Large} to the weighted
integral ${\mathcal I}(x,y)$.  We retain the local mean square integrals
of the Euler product at this stage, while keeping track of the additional
deterministic multiplier.

\begin{lemma}[Weighted high moment estimate]
\label{lem:weighted-high-moment}
For any large $X$ and every integer $1\leq k\leq\log^{1/4}X$, outside an event of
probability at most $(\log\log X)^{-2k}$, we have, simultaneously for
every $0\leq\theta\leq2\pi$,
\begin{align}
 &\max_{x\in{\mathcal G}_X(\theta)}
 \sum_{y\in{\mathcal G}_X(\theta)}|{\mathcal I}(x,y)|^{2k}\notag\\
 &\ll \log X\bigg(
 \frac1{\log^{1/3}X}
 \sum_{|N|\leq(\log\log X)^2+1}\frac1{1+N^2}
 \left(
 \frac{C(\log\log\log X)^5}{\log X}
 \int_N^{N+1}|F_X(1/2+it)|^2\,\mathrm{d}t
 \right)^{2k}\notag\\
 &\hspace{32mm}
 +\frac1{\log^{1/3}X}
 \left(k^{29}(\log\log X)^{17}\right)^{2k}
 +\left(
 \frac{k^{29}}{(\log\log X)^{989}}
 \right)^{2k}
 \bigg). \label{eq:weighted-2k}
\end{align}
\end{lemma}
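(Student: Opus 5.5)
We will follow Harper's proof of \cite[Proposition~3]{Harper2023Large} closely, treating the deterministic multiplier as a bounded weight on $\Delta$. The starting point is the expansion of the $2k$-th power. Since $x,y\in\mathcal G_X(\theta)$ have the form $X^{8/7}e^{\theta+2\pi r}$, we write $y^{-it}=X^{-8it/7}e^{-it\theta}e^{-2\pi i r t}$. We then bound the maximum over $x$ trivially by removing the phases $x^{iv}$ (taking absolute values in $v$ only after expanding). Thus
\[
\sum_{y}|\mathcal I(x,y)|^{2k}=\sum_r \mathcal I(x,y_r)^k\overline{\mathcal I(x,y_r)}^k,
\]
which is a $4k$-fold integral over $(v_j,t_j)\in\Delta$. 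The only $r$-dependence is through $e^{-2\pi i r\Xi}$ with $\Xi=t_1+\dots+t_k-t_{k+1}-\dots-t_{2k}$. Summing the geometric series gives $\ll\min(\log X,\|\Xi\|^{-1})$. The $\theta$- and $X^{8/7}$-phases have modulus one, so all subsequent bounds are uniform in $\theta$ and in $x$.

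Next we take absolute values of everything else. On $\Delta$ we use $|m_{X,0}|\ll(\log\log\log X)^2$ (Lemma~\ref{lem:multiplier}), $|1/2+iv|^{-1}\ll 1$, and the kernel bound \eqref{eq:near-kernel-mass}. After integrating out each $v_j$ against $|P_X(v_j-t_j)|$, we are left with a $2k$-fold integral over $t_j$. This integral has weights $G(t_j)=\mathbf 1_{\mathcal A}\,|F_X(1/2+it_j)|\cdot\sup_{|v-t_j|\le d}\mathbf 1_{\mathcal A(v)}|F_X(1/2+iv)|$, or we can use Cauchy--Schwarz as in \eqref{eq:CSinq} to reduce to $|F_X|^2$ weights with the factor $C(\log\log\log X)^5/\log X$ per index. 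This accounts for the constant in the first term of \eqref{eq:weighted-2k}.

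We then split according to $\|\Xi\|$. If $\|\Xi\|\ge \log^{-1/3}X$ (roughly), the geometric sum saves a factor of $\log^{1/3}X$ relative to $\log X$. The remaining integral factorises into $\big(\int|F_X|^2/|1/2+it|^2\big)^{2k}$, and decomposing into unit intervals $[N,N+1]$ with Hölder gives the first term. The near-integer range is the main obstacle. There, as in \cite[Sections~3.3--3.4]{Harper2023Large}, we order $t_1\le\dots\le t_{2k}$ (a symmetrisation costing $(2k)!$, absorbed into $k^{O(k)}$) and classify the consecutive gaps by dyadic size. For clusters of close frequencies we use the strong barrier \eqref{eq:strong-barrier}: on $\mathcal A(t)$, the product $|F_X(1/2+it)F_X(1/2+it')|$ for $|t-t'|\approx e^{j}/\log X$ is controlled by $|F_{X,j}|^2$ times an independent large-prime part. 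Taking expectation then yields the saving $(\log\log X)^{-1000}$ for $j\le0.99\log\log X$ and a loss of $(\log\log X)^{6}$ for larger $j$. For configurations with many large gaps, the constraint $\|\Xi\|\le\log^{-1/3}X$ removes one degree of freedom of volume. The expectation of the product of $2k$ Euler products over a well-spaced configuration is bounded via Euler Product Result~2-type estimates, giving $(\log X)^{2k}$ times polynomial factors in $k,\log\log X$.

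Finally, we convert the expectation bounds to the stated high-probability statement. We write the right-hand side of the claimed inequality as random (first term) plus deterministic terms. We show that the expectation of the left minus the first term is $\ll$ the deterministic terms divided by $(\log\log X)^{2k}$, and then apply Markov to get exceptional probability $(\log\log X)^{-2k}$. The bookkeeping of exponents is the real work: $k^{29}$ comes from combinatorics, $(\log\log X)^{17}$ from the weak-barrier cases, and $(\log\log X)^{-989}=(\log\log X)^{-1000}$ times the multiplier and $\log\log\log$ losses. The multiplier contributes only $(\log\log\log X)^{4k}$, which is absorbed. Uniformity in $\theta$ holds because the expectation bound does not depend on the phases.
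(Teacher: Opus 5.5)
Your outline matches the paper's route. You expand the $2k$-th power and bound the sum over the logarithmic grid by $\min(\log X,\|\Xi\|_{\R/\Z}^{-1})$. Where $\|\Xi\|_{\R/\Z}$ is not small you bound pathwise, which produces the random first term with $C(\log\log\log X)^5/\log X$ per copy once the $v_j$ are integrated out. On the near-integer range you use Harper's ordered-gap and barrier analysis, and you finish with Markov's inequality at $(\log\log X)^{2k}$ times the expectation, with uniformity in $x$ and $\theta$ because all phases have modulus one.

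The step you are missing is the localisation the paper performs \emph{before} expanding. It splits $\Delta$ into the pieces $\Delta_N$ on which $N\le\max(v,t)<N+1$ and applies H\"older in the form $|\mathcal I(x,y)|^{2k}\ll C^{2k}\sum_N(1+N^2)^{-1}|(1+N^2)\mathcal I_N(x,y)|^{2k}$. Thus, for each fixed $N$, all $2k$ pairs $(v_j,t_j)$ lie in the same $\Delta_N$. You instead expand $|\mathcal I(x,y)|^{2k}$ over the whole of $\Delta$ and decompose into unit intervals only at the end of the easy range. So in your near-integer range the $t_j$ run over $(\log\log X)^{-2}\le|t|\le(\log\log X)^2$ with arbitrary signs.

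This breaks the near-integer analysis for the Rademacher model. Since $F_X(1/2-it)=\overline{F_X(1/2+it)}$, the local moment at each prime involves $\cos((t_l+t_j)\log p)$ as well as $\cos((t_l-t_j)\log p)$, as in \eqref{eq:rademacher-local-moment}. A configuration with $t_l\approx-t_j$ therefore has essentially perfectly correlated Euler products, even though $t_l$ and $t_j$ are far apart in the ordering $t_{\sigma(1)}<\cdots<t_{\sigma(2k)}$. Neither the barrier levels read off from consecutive gaps nor the pair-correlation bounds in terms of consecutive gaps detect this. Moreover, when $l,j$ lie in the same block of $k$ indices the pair contributes almost nothing to $\Xi$, so the near-integer condition gives no compensating volume saving.

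Inside a single $\Delta_N$, by contrast, all frequencies have the same sign, because $|t|\ge(\log\log X)^{-2}$ while $d=o((\log\log X)^{-2})$. Hence $|t_l+t_j|\ge|t_l-t_j|$, and the sum terms are dominated by the difference terms. This is exactly what the paper checks before deferring to Harper's Section~3.4. The same localisation also gives $\Xi=O(k)$, so only $O(k)$ integers $m$ arise. With that H\"older step inserted at the start, your sketch matches the paper.

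Minor slips:
\begin{itemize}
\item The multiplier loss is $(C\log\log\log X)^{8k}$ ($4k$ factors, each $\ll(\log\log\log X)^2$), not $(\log\log\log X)^{4k}$.
\item With threshold $\log^{-1/3}X$ the geometric sum is at most $\log^{1/3}X$, so the saving you state is off; the paper uses the threshold $\log^{-2/3}X$.
\item After AM--GM, the term keeping $\prod_j|F_X(1/2+iv_j)|^2$ needs the near-integer condition transferred from the $t_j$ to the $v_j$. This works because $2kd\le\log^{-2/3}X$.
\end{itemize}
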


\begin{proof}
We follow the proof of
\cite[Proposition~3 and Section~3.4]{Harper2023Large}, keeping track of
the two modifications arising in the present setting.  The common shift
$\theta$ contributes only a phase of modulus one.  In addition, each copy
of ${\mathcal I}(x,y)$ contains the two deterministic factors
$m_{X,0}(v)$ and $m_{X,0}(t)$.  On $\Delta$, by Lemma~\ref{lem:multiplier} we have
\(
 |m_{X,0}(v)|,\ |m_{X,0}(t)|
 \ll(\log\log\log X)^2,
\)
so the $2k$-th moment expansion contains $4k$ multiplier factors and
hence incurs the loss
\begin{equation}\label{eq:multiplier-2k-loss}
 (C\log\log\log X)^{8k}.
\end{equation}

For each integer $N$, let $\Delta_N$ be the part of $\Delta$ on which \( N\leq\max(v,t)<N+1,
\)
and let ${\mathcal I}_N(x,y)$ denote the corresponding part of
${\mathcal I}(x,y)$.  By H\"older's inequality we get
\begin{equation}\label{eq:unit-holder}
 |{\mathcal I}(x,y)|^{2k}
 \ll C^{2k}\sum_N\frac1{1+N^2}
 \left|(1+N^2){\mathcal I}_N(x,y)\right|^{2k}.
\end{equation}
On $\Delta_N$, we have \( |(1/2+iv)(1/2-it)| \asymp 1+N^2,\) thus the factor $1+N^2$ introduced in \eqref{eq:unit-holder} absorbs
the two denominators in ${\mathcal I}_N(x,y)$.

We now fix $N$ and expand the $2k$-th power. The first $k$ copies of
${\mathcal I}_N(x,y)$ and the remaining $k$ copies of its conjugate
introduce variables
\(
 (v_1,t_1),\ldots,(v_{2k},t_{2k}).
\) 
Writing
\[
 \Xi=t_1+\cdots+t_k-t_{k+1}-\cdots-t_{2k},
\]
the $y$-dependent phase is $y^{-i\Xi}$.  For
\(
 y=X^{8/7}e^{\theta+2\pi u},
\)
this becomes \( X^{-8i\Xi/7}e^{-i\theta\Xi}e^{-2\pi iu\Xi}.
\)
The first two factors are independent of $u$ and have modulus one,  summing over the admissible values of $u$ gives, uniformly
in $\theta$, 
\begin{equation}\label{eq:y-geometric-sum}
 \left|\sum_u e^{-2\pi iu\Xi}\right|
 \ll
 \min\left(\log X,\frac1{\|\Xi\|_{\R/\Z}}\right),
\end{equation}
where $\|\cdot\|_{\R/\Z}$ denotes distance to the nearest integer.

By \eqref{eq:P-pointwise-far} and \eqref{eq:near-kernel-mass}, throughout
$|u|\leq d$ we have
\begin{equation}\label{eq:PX-L1-high-moment}
 |P_X(u)|
 \ll\min\left(1,\frac1{|u|\log X}\right),
 \qquad
 \int_{|u|\leq d}|P_X(u)|\,\mathrm{d}u
 \ll\frac{\log\log\log X}{\log X}.
\end{equation}

We first consider
\(
 \|\Xi\|_{\R/\Z}\geq\log^{-2/3}X.
\)
Using \eqref{eq:y-geometric-sum} with the bound $\log^{2/3}X$,
discarding the barrier indicators, and integrating the $v_j$ variables
by \eqref{eq:PX-L1-high-moment}, Harper's argument gives, apart from the
additional factor \eqref{eq:multiplier-2k-loss},
\[
 \begin{aligned}
 &\sum_{y\in{\mathcal G}_X(\theta)}
 |(1+N^2){\mathcal I}_N(x,y)|^{2k}\\
 &\qquad\ll
 (C\log\log\log X)^{8k}\log^{2/3}X
 \left(
 \frac{C\log\log\log X}{\log X}
 \int_N^{N+1}|F_X(1/2+it)|^2\,\mathrm{d}t
 \right)^{2k}\\
 &\qquad \ll \frac{\log X}{\log^{1/3}X}
 \left(
 \frac{C(\log\log\log X)^5}{\log X}
 \int_N^{N+1}|F_X(1/2+it)|^2\,\mathrm{d}t
 \right)^{2k}.
  \end{aligned}
\]
This is the first term in \eqref{eq:weighted-2k}.  In Harper's
Proposition~3 the corresponding power is only
$\log\log\log X$; the additional fourth power here is exactly the loss
coming from the multiplier.

We next turn to \( \|\Xi\|_{\R/\Z}<\log^{-2/3}X.
\)
Since $t_j=N+O(1)$ for every $j$, we have
\[
 \Xi
 =
 \sum_{j\leq k}(t_j-N)-\sum_{j>k}(t_j-N)
 =O(k).
\] 
Hence
\( |\Xi-m|<\log^{-2/3}X
\)
for some integer $|m|\leq Ck$.  Moreover,
\[
 \begin{aligned}
 \bigg|
 \sum_{j\leq k}v_j-\sum_{j>k}v_j-m
 \bigg|
 &\leq
 |\Xi-m|+\sum_{j=1}^{2k}|v_j-t_j|\leq
 \log^{-2/3}X+2kd
 \leq2\log^{-2/3}X
 \end{aligned}
\]
for sufficiently large $X$.

We bound the geometric sum in \eqref{eq:y-geometric-sum} trivially by
$\log X$.  Using
\[
 \begin{aligned}
 &\prod_{j=1}^{2k}
 \mathbf 1_{{\mathcal A}(t_j)}
 \mathbf 1_{{\mathcal A}(v_j)}
 |F_X(\frac12+it_j)F_X(\frac12+iv_j)|\\
 &\qquad\leq
 \frac12\prod_{j=1}^{2k}
 \mathbf 1_{{\mathcal A}(t_j)}
 |F_X(\frac12+it_j)|^2
 +
 \frac12\prod_{j=1}^{2k}
 \mathbf 1_{{\mathcal A}(v_j)}
 |F_X(\frac12+iv_j)|^2
 \end{aligned}
\]
and the symmetry between $v$ and $t$, we need only consider the first
term.  Integrating each $v_j$ by
\eqref{eq:near-kernel-mass}, this part is bounded by
\begin{equation}\label{eq:near-integer-reduction}
 (C\log\log\log X)^{8k}\log X
 \left(\frac{C\log\log\log X}{\log X}\right)^{2k}
 \sum_{|m|\leq Ck}
 \int_{I_m}
 \prod_{j=1}^{2k}
 \mathbf 1_{{\mathcal A}(t_j)}
 |F_X(1/2+it_j)|^2\,\mathrm{d}\mathbf t,
\end{equation}
where $I_m$ denotes the part of the relevant $2k$-dimensional box on
which
\[
 \left|
 \sum_{j\leq k}t_j-\sum_{j>k}t_j-m
 \right|
 \leq2\log^{-2/3}X.
\]

 For each permutation
$\sigma\in S_{2k}$, we restrict to the region
\(
 t_{\sigma(1)}<\cdots<t_{\sigma(2k)}
\)
and further decompose it according to the sizes of the successive
differences.  For $1\leq j<2k$, let $h_j=0$ when
\(
 t_{\sigma(j+1)}-t_{\sigma(j)}
 \leq{e}/{\log X},
\)
and otherwise choose $h_j\geq1$ so that
\( {e^{h_j}}/{\log X}
 <
 t_{\sigma(j+1)}-t_{\sigma(j)}
 \leq
 {e^{h_j+1}}/{\log X}.
\)
As in Harper's proof, we let \( \widetilde h_j
 =
 \min\left(
 h_j,\left\lfloor
 \log\log X-2\log\log\log X
 \right\rfloor
 \right).
\)
Then \( X^{e^{-\widetilde h_j}}= \exp\big(e^{\log\log X-\widetilde h_j}\big)\geq \exp\left((\log\log X)^2\right)\geq1000k^6\) for sufficiently large $X$.  

For $1\leq j<2k$, we write
\[
 F_X(1/2+it_{\sigma(j)})
 =
 F_{X,\widetilde h_j}(1/2+it_{\sigma(j)})
 \frac{F_X(1/2+it_{\sigma(j)})}
 {F_{X,\widetilde h_j}(1/2+it_{\sigma(j)})}.
\]
On ${\mathcal A}(t_{\sigma(j)})$, by the definition of barrier event \eqref{eq:strong-barrier}, we have
\[
 |F_{X,\widetilde h_j}(1/2+it_{\sigma(j)})|
 \leq
 \begin{cases}
 \displaystyle
 \frac{\log X}
 {e^{\widetilde h_j}(\log\log X)^{1000}},
 & \widetilde h_j\leq0.99\log\log X,\\[2mm]
 \displaystyle
 \frac{\log X}{e^{\widetilde h_j}}(\log\log X)^6,
 & \widetilde h_j>0.99\log\log X.
 \end{cases}
\]
Hence
\begin{align}
 &\prod_{l=1}^{2k}\mathbf 1_{{\mathcal A}(t_l)}
 |F_X(\frac12+it_l)|^2\leq
 \prod_{j=1}^{2k-1}
 \left(\frac{\log X}{e^{\widetilde h_j}}\right)^2
 \prod_{\substack{1\leq j<2k\\
 \widetilde h_j\leq0.99\log\log X}}
 (\log\log X)^{-2000}
 \notag\\
 &\qquad\qquad\times \prod_{\substack{1\leq j<2k\\
 \widetilde h_j>0.99\log\log X}}
 (\log\log X)^{12}
 |F_X(\frac12+it_{\sigma(2k)})|^2
 \prod_{j=1}^{2k-1}
 \frac{|F_X(1/2+it_{\sigma(j)})|^2}
 {|F_{X,\widetilde h_j}(1/2+it_{\sigma(j)})|^2}.
 \label{eq:partial-product-bound}
\end{align}
Moreover,
\[
 \frac{F_X(1/2+it_{\sigma(j)})}
 {F_{X,\widetilde h_j}(1/2+it_{\sigma(j)})}
 =
 \prod_{X^{e^{-\widetilde h_j}}<p\leq X}
 \left(1+\frac{f(p)}{p^{1/2+it_{\sigma(j)}}}\right),
\]
so every Euler factor occurring in these quotients corresponds to a
prime $p>1000k^6$.

For a fixed prime $p>1000k^6$, let $J_p$ denote the set of frequencies
whose Euler factors at $p$ occur in the last line of
\eqref{eq:partial-product-bound}.  Thus $|J_p|\leq2k$, and
\[
 |1+f(p)p^{-1/2-it_j}|^2
 =
 1+\frac1p+\frac{2f(p)}{\sqrt p}\cos(t_j\log p).
\]
Expanding the product over $j\in J_p$, averaging over $f(p)$, and then
using $\log(1+u)\leq u$, we obtain $\log \E\prod_{j\in J_p}
 |1+f(p)p^{-1/2-it_j}|^2$ is
\begin{align}
\leq
 \frac{|J_p|}{p}
 +\frac2p
 \sum_{\substack{j<l\\j,l\in J_p}}
 \left(
 \cos((t_l-t_j)\log p)
 +\cos((t_l+t_j)\log p)
 \right)
 +O\left(\frac{k^3}{p^{3/2}}\right).
 \label{eq:rademacher-local-moment}
\end{align}
The primes $p\leq1000k^6$ occur only in the full factor
$F_X(1/2+it_{\sigma(2k)})$, and their contribution to the expectation is \( \prod_{p\leq1000k^6}\left(1+1/p\right)
 \ll\log(1000k^6).
\)

The error terms in \eqref{eq:rademacher-local-moment} are harmless, since \(\sum_{p>1000k^6}{k^3}/{p^{3/2}}\ll1.
\)
The one-point terms are estimated by Mertens' theorem.  For a pair of
Euler product factors whose common prime range is $Q<p\leq X$, Number
Theory Result~2 of \cite[Section~2.2]{Harper2023Large} gives
\[
 \left|
 \sum_{Q<p\leq X}\frac{\cos(u\log p)}p
 \right|
 \ll\frac1{|u|\log Q},
 \qquad
 Q\geq e^{(\log\log X)^2}.
\]

All the frequencies occurring in a fixed $\Delta_N$ have the same sign, since $d=o((\log\log X)^{-2})$.
Hence, for $j<l$,
\[
 |t_{\sigma(l)}+t_{\sigma(j)}|
 \geq
 |t_{\sigma(l)}-t_{\sigma(j)}|.
\]
Thus the terms involving $t_{\sigma(l)}+t_{\sigma(j)}$, which also occur
in Harper's Rademacher case, are bounded by the same estimates as the
terms involving $t_{\sigma(l)}-t_{\sigma(j)}$.  The resulting pair
correlations are therefore controlled by the successive differences
$t_{\sigma(r+1)}-t_{\sigma(r)}$ exactly as in
\cite[Section~3.4]{Harper2023Large}.

From this point, the estimates for the ordered frequency regions and
the summation over the successive gap sizes proceed as in
\cite[Section~3.4]{Harper2023Large}.  After summing over $N$ with the
weights $(1+N^2)^{-1}$, the expectation of the contribution under
consideration, with the deterministic multiplier factor omitted, is
\[
 \ll
 \log X\bigg(
 \frac1{\log^{1/3}X}
 \left(
 Ck^{29}(\log\log X)^{15}\log\log\log X
 \right)^{2k}
 +
 \bigg(
 \frac{Ck^{29}\log\log\log X}
 {(\log\log X)^{991}}
 \bigg)^{2k}
 \bigg).
\]
Applying Markov's inequality at $(\log\log X)^{2k}$ times this
expectation gives an exceptional probability at most
$(\log\log X)^{-2k}$.  Restoring the multiplier factor
\eqref{eq:multiplier-2k-loss} amounts to inserting
$(C\log\log\log X)^4$ inside each $2k$-th power.  Thus the two
quantities inside the powers are bounded by
\(
 Ck^{29}(\log\log X)^{16}(\log\log\log X)^5\le k^{29}(\log\log X)^{17}
\)
and
\(
 {Ck^{29}(\log\log\log X)^5}
 /{(\log\log X)^{990}}\le {k^{29}}/{(\log\log X)^{989}},
\)
respectively, for sufficiently large $X$.
Hence the last two terms are unchanged from Harper's final estimate.
The only visible effect of the deterministic multiplier is the
replacement
\(
 \log\log\log X\)
by \( (\log\log\log X)^5
\)
in the coefficient of the unit-interval mean square term.

Combining the two ranges and then using \eqref{eq:unit-holder} gives
\eqref{eq:weighted-2k}.  The argument is uniform for
$1\leq k\leq\log^{1/4}X$, since, for sufficiently large $X$,
\[
 2kd\leq2\log^{-2/3}X,
 \qquad
 X^{e^{-\widetilde h_j}}\geq e^{(\log\log X)^2}\geq1000k^6.
\]
Finally, the only dependence on the common shift is through
$e^{-i\theta\Xi}$, which has modulus one.  Hence the same exceptional
event works simultaneously for every $0\leq\theta\leq2\pi$.
\end{proof}

\begin{lemma}[Uniform high moment bound]
\label{lem:weighted-high-moment-simple}
For every integer $1\leq k\leq\log^{1/4}X$, outside an event of
probability $O((\log\log X)^{-1})$, we have, simultaneously for every
$0\leq\theta\leq2\pi$,
\begin{equation}\label{eq:weighted-2k-simple-statement}
 \max_{x\in{\mathcal G}_X(\theta)}
 \sum_{y\in{\mathcal G}_X(\theta)}|{\mathcal I}(x,y)|^{2k}
 \ll\log X\bigg(
 \frac{\left(k^{29}(\log\log X)^{17}\right)^{2k}}
 {\log^{1/3}X}
 +
 \bigg(\frac{k^{29}}{(\log\log X)^{989}}\bigg)^{2k}
 \bigg).
\end{equation}
\end{lemma}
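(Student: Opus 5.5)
Lemma~\ref{lem:weighted-high-moment-simple} follows from Lemma~\ref{lem:weighted-high-moment} once the first term in \eqref{eq:weighted-2k} is shown to be dominated by the second, outside a small exceptional event. Concretely, we will show that with high probability every unit-interval mean square
\[
 \frac{C(\log\log\log X)^5}{\log X}\int_N^{N+1}|F_X(1/2+it)|^2\,\mathrm{d}t,\qquad |N|\leq(\log\log X)^2+1,
\]
is at most $(\log\log X)^{17}$, or even at most $(\log\log X)^{2}$.

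Once this holds, the sum over $N$ with weights $(1+N^2)^{-1}$ is bounded by $\sum_N(1+N^2)^{-1}\bigl((\log\log X)^{17}\bigr)^{2k}$. This is $\ll(k^{29}(\log\log X)^{17})^{2k}$, since $k\geq1$. The first term of \eqref{eq:weighted-2k} is therefore absorbed into the second, giving \eqref{eq:weighted-2k-simple-statement}.

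To control the unit-interval integrals, use the second moment. By independence,
\[
 \E\int_N^{N+1}|F_X(1/2+it)|^2\,\mathrm{d}t=\prod_{p\leq X}(1+p^{-1})\ll\log X .
\]
Markov's inequality then shows that a single $N$ has
\[
 \int_N^{N+1}|F_X(1/2+it)|^2\,\mathrm{d}t>\frac{(\log\log X)^{4}\log X}{(\log\log\log X)^5}
\]
with probability $\ll(\log\log\log X)^5(\log\log X)^{-4}$. A union bound over the $O((\log\log X)^2)$ values of $N$ gives total probability $\ll(\log\log\log X)^5(\log\log X)^{-2}=O((\log\log X)^{-1})$. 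On the complement, each bracket is $\ll(\log\log X)^4\leq(\log\log X)^{17}$. Alternatively, one could use Multiplicative Chaos Result~3 via \eqref{eq:mc3-old} with $\sigma=0$, but the crude second moment already suffices.

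Finally, intersect this good event with the complement of the exceptional event in Lemma~\ref{lem:weighted-high-moment}. That event has probability at most $(\log\log X)^{-2k}\leq(\log\log X)^{-2}$, so the total exceptional probability is $O((\log\log X)^{-1})$, as required. Both events depend only on $(f(p))_{p\leq X}$ and not on $\theta$, so the bound holds simultaneously for every $\theta$.

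The only point needing care is the constant $C$ in the bracket. It is absolute, so it is absorbed by enlarging the power of $\log\log X$ slightly, which is harmless. I expect no genuine obstacle.
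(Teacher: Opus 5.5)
Your argument is correct and is essentially the paper's proof. You use a second-moment Markov bound together with a union bound over the $O((\log\log X)^2)$ unit intervals to control every $\int_N^{N+1}|F_X(1/2+it)|^2\,\mathrm{d}t$; this makes the first term of \eqref{eq:weighted-2k} at most the second, and the exceptional probabilities combine to $O((\log\log X)^{-1})$. The only cosmetic difference is your threshold $(\log\log X)^4\log X/(\log\log\log X)^5$ in place of the paper's $\log X(\log\log X)^3$.
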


\begin{proof}
 By Markov's inequality and a union bound,  we have
\[
 \begin{aligned}
 &\p\left(
 \max_{|N|\leq(\log\log X)^2+1}
 \int_N^{N+1}|F_X(1/2+it)|^2\,\mathrm{d}t
 >
 \log X(\log\log X)^3
 \right)\\
 &\quad\leq
 \frac1{\log X(\log\log X)^3}
 \sum_{|N|\leq(\log\log X)^2+1}
 \int_N^{N+1}
 \E|F_X(1/2+it)|^2\,\mathrm{d}t\\
 &\quad\ll
 (\log\log X)^2(\log\log X)^{-3}
 \ll(\log\log X)^{-1},
 \end{aligned}
\]
where \( \E|F_X(1/2+it)|^2
 =
 \prod_{p\leq X}\left(1+1/p\right)
 \ll\log X
\)
uniformly in $t$.
Thus, outside an event of probability $O((\log\log X)^{-1})$,
\[
 \int_N^{N+1}|F_X(1/2+it)|^2\,\mathrm{d}t
 \leq
 \log X(\log\log X)^3
 \qquad
 (|N|\leq(\log\log X)^2+1).
\]

Substituting this bound into the first term of
\eqref{eq:weighted-2k}, we obtain it is
\[
 \begin{aligned}
\ll
 \frac1{\log^{1/3}X}
 \left(
 C(\log\log\log X)^5(\log\log X)^3
 \right)^{2k}\ll
 \frac1{\log^{1/3}X}
 \left(
 k^{29}(\log\log X)^{17}
 \right)^{2k},
 \end{aligned}
\]
for sufficiently large $X$.  Lemma~\ref{lem:weighted-high-moment}
therefore gives \eqref{eq:weighted-2k-simple-statement}.

The exceptional probability from
Lemma~\ref{lem:weighted-high-moment} is at most
$(\log\log X)^{-2k}\ll(\log\log X)^{-2}$, so together with the
preceding exceptional event the total probability is
$O((\log\log X)^{-1})$.  Both estimates hold simultaneously for every
common shift $\theta$, which completes the proof.
\end{proof}

\begin{proof}[Proof of Proposition~\ref{prop:weighted-covariance}]
We take $k$ to be the largest integer  \(\le
 {\log\log X}/{(5000\log\log\log X)}.
\)
By Lemma~\ref{lem:cov-barrier},
there is an absolute constant $c>0$
such that
\[
 |\cov_X(Y_x,Y_y)|>(\log\log X)^{-0.59}
 \quad\Longrightarrow\quad
 |{\mathcal I}(x,y)|
 \geq c(\log\log X)^{-0.59}.
\]
For fixed $x\in{\mathcal G}_X(\theta)$, let $D_x$ denote the number of
$y\in{\mathcal G}_X(\theta)$ satisfying the latter inequality.  Then
\[
 D_x\,c^{2k}(\log\log X)^{-1.18k}
 \leq
 \sum_{y\in{\mathcal G}_X(\theta)}
 |{\mathcal I}(x,y)|^{2k}.
\]
Lemma~\ref{lem:weighted-high-moment-simple} therefore gives
\[
 \begin{aligned}
 D_x
 &\ll
 c^{-2k}(\log\log X)^{1.18k}\log X
 \bigg(
 \frac{\left(k^{29}(\log\log X)^{17}\right)^{2k}}
 {\log^{1/3}X}
 +
 \bigg(\frac{k^{29}}{(\log\log X)^{989}}\bigg)^{2k}
 \bigg)\\
 &\ll
 \log^{2/3}X
 \left((\log\log X)^{50}\right)^{2k}
 +
 \log X
 \left((\log\log X)^{-950}\right)^{2k}\\
 &\ll
 (\log X)^{0.70}
 \leq(\log X)^{0.8}
 \end{aligned}
\]
for sufficiently large $X$. 

The exceptional events in
Lemmas~\ref{lem:cov-barrier} and
\ref{lem:weighted-high-moment-simple} have probabilities
$O((\log\log X)^{-0.1})$ and
$O((\log\log X)^{-1})$, respectively.  Both conclusions hold
simultaneously for every common shift $\theta$, so their union has
probability $O((\log\log X)^{-0.1})$.  This proves the proposition.
\end{proof}

\section{Conditional Gaussian approximation}\label{sec:fresh-primes}

We now compare the conditional law of the large prime increments
\(Y_x\) in \eqref{eq:fresh-increment} with that of a centred Gaussian
vector having the same conditional covariance matrix. Once the variables
\((f(p))_{p\leq X}\) are fixed, this is a deterministic multivariate
normal-approximation problem. The Gaussian comparison itself is supplied
by Normal Approximation Result~1 of \cite{Harper2023Large}. The only
additional point needed here is to verify that its error terms remain
negligible for the coefficients arising from the extended Rademacher
RMF.

Conditional on \(\mathscr F_X\), let
\((Z_x)_{x\in\mathcal T}\) be a centred Gaussian vector with the same
covariance matrix as \((Y_x)_{x\in\mathcal T}\), and write \(\p_Z\)
for probability with respect to this Gaussian vector.

\begin{lemma}[Conditional Gaussian approximation]
\label{lem:conditional-stein}
Let
\(
\mathcal T_0\subset[X^{8/7},X^{4/3}/4]
\)
be deterministic with
\(\#\mathcal T_0\ll\log X\), and let
\(\varnothing\ne\mathcal T\subseteq\mathcal T_0\) be
\(\mathscr F_X\)-measurable.
Conditional on \(\mathscr F_X\), let
\((Z_x)_{x\in\mathcal T}\) be a centred Gaussian vector with the same
covariance matrix as \((Y_x)_{x\in\mathcal T}\).
Then, for every \(u\in\R\),
\begin{equation}\label{eq:stein-max}
 \p_X\left(\max_{x\in\mathcal T}Y_x\leq u\right)
 \leq
 \p_Z\left(\max_{x\in\mathcal T}Z_x\leq u+1\right)
 +\operatorname{Err}_X(\mathcal T_0),
\end{equation}
where \(\operatorname{Err}_X(\mathcal T_0)\) is
\(\mathscr F_X\)-measurable and satisfies
\begin{equation}\label{eq:stein-error-mean}
 \E\operatorname{Err}_X(\mathcal T_0)
 \ll X^{-1/2}\log^C X
\end{equation}
for some absolute constant \(C>0\).
\end{lemma}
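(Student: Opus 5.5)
We apply Normal Approximation Result~1 of \cite{Harper2023Large} conditionally on $\mathscr F_X$. Once $(f(p))_{p\le X}$ is fixed, the vector $(Y_x)_{x\in\mathcal T}$ has the form $Y_x=\sum_{X<p\le X^{4/3}}\epsilon_p a_{x,p}$ with deterministic coefficients $a_{x,p}=x^{-1/2}B_X(x/p)\mathbf 1_{p\le 4x}$ and independent Rademacher $\epsilon_p$. Harper's result, which is proved by Stein's method with a smoothing of the indicator of $\{\max\le u\}$ at scale $1$, gives \eqref{eq:stein-max}. Its error term is a sum over primes of third-moment-type quantities of the coefficient vectors $(a_{x,p})_{x\in\mathcal T}$, multiplied by powers of $\log\#\mathcal T$. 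Concretely, we would define
\[
\operatorname{Err}_X(\mathcal T_0)=C\log^{C}X\sum_{X<p\le X^{4/3}}\Big(\sum_{x\in\mathcal T_0}|a_{x,p}|\Big)^{3},
\]
or the corresponding fourth-power variant if Harper's statement uses one. Since $\mathcal T\subseteq\mathcal T_0$ and the summands are nonnegative, replacing $\mathcal T$ by the deterministic $\mathcal T_0$ only enlarges the error. This makes $\operatorname{Err}_X(\mathcal T_0)$ manifestly $\mathscr F_X$-measurable and independent of the random choice of $\mathcal T$.

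It remains to bound $\E\operatorname{Err}_X(\mathcal T_0)$. First apply Hölder: $(\sum_{x\in\mathcal T_0}|a_{x,p}|)^3\le(\#\mathcal T_0)^2\sum_{x}|a_{x,p}|^3$, with $\#\mathcal T_0\ll\log X$. Next use $\E|B_X(x/p)|^3$. Since $x\le X^{4/3}/4$ and $p>X$, we have $x/p<X^{1/3}/4$, so $4(x/p)<X$. Hence Lemma~\ref{lem:complete-moments} with $r=3$ applies and gives $\E|B_X(x/p)|^3\ll(1+x/p)^{3/2}\log^{A_3}X$. Therefore
\[
\E\operatorname{Err}_X(\mathcal T_0)\ll\log^{C'}X\sum_{x\in\mathcal T_0}x^{-3/2}\sum_{X<p\le 4x}\Big(1+\frac xp\Big)^{3/2}
\ll\log^{C'}X\sum_{x\in\mathcal T_0}x^{-3/2}\cdot x^{3/2}X^{-1/2}.
\]
The last step uses $\sum_{p>X}p^{-3/2}\ll X^{-1/2}$, while the term $1$ contributes $x^{-3/2}\cdot x\ll X^{-4/7}$, which is smaller. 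Summing over $\#\mathcal T_0\ll\log X$ points gives $X^{-1/2}\log^CX$, as required.

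The main obstacle is matching the exact form of Harper's error term. If it involves fourth moments, or mixed products $a_{x,p}a_{y,p}a_{z,p}$, we handle it the same way: Hölder reduces to $\E|B_X|^4$, which Lemma~\ref{lem:complete-moments} with $r=4$ bounds by $(x/p)^2\log^{A_4}X$. This gives $\sum_p x^{-2}(x/p)^2\ll X^{-1}$, which is even better. One must also check that Harper's theorem allows a random index set. It does, because the result is applied pointwise for each fixed $\mathscr F_X$-realisation, with $\mathcal T$ fixed, and all conditional quantities are measurable.
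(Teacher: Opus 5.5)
Your route is the paper's. You apply Harper's Normal Approximation Result~1 conditionally on $\mathscr F_X$ with smoothing parameter $1$, use positivity to pass from the random $\mathcal T$ to the deterministic $\mathcal T_0$, and bound expectations with Lemma~\ref{lem:complete-moments}. Your estimate of the cubic term $\sum_p\big(\sum_{x\in\mathcal T_0}|a_{p,x}|\big)^3$ is exactly the paper's: H\"older, $r=3$, the contribution $x^{-3/2}\pi(4x)$, and $\sum_{p>X}p^{-3/2}\ll X^{-1/2}$.

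The gap is in the error term, which you left unidentified, and your fallback for the second piece is wrong. In the form the paper uses, Harper's bound gives an error $O(E_2(\mathcal T)+E_3(\mathcal T))$. Here $E_3$ is your cubic sum, and
\[
 E_2(\mathcal T)=\sum_{x,y\in\mathcal T}\Big(\sum_{X<p\le X^{4/3}}a_{p,x}^2a_{p,y}^2\Big)^{1/2}.
\]
Your definition of $\operatorname{Err}_X(\mathcal T_0)$ omits $E_2$, so with that definition \eqref{eq:stein-max} does not follow. Your claim that a fourth-moment term would be ``even better'', of size $X^{-1}$, ignores the square root. What is true is $\sum_p\E\, a_{p,x}^2a_{p,y}^2\ll X^{-1}\log^CX$, by Cauchy--Schwarz in probability and the case $r=4$ of Lemma~\ref{lem:complete-moments}. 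The dominant part is $\sum_{p>X}p^{-2}$, coming from primes with $x/p$ large. To bound $\E E_2$ you then need Jensen, $\E\sqrt{S}\le\sqrt{\E S}$, which gives only $X^{-1/2}\log^{C}X$ per pair. Summing over the $\ll\log^2X$ pairs gives $X^{-1/2}\log^{C}X$.

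So $E_2$ is of the same order as $E_3$, not smaller, and it also forces the exponent $1/2$ in \eqref{eq:stein-error-mean}. The lemma still holds once you define $\operatorname{Err}_X(\mathcal T_0)$ as a constant times $E_2(\mathcal T_0)+E_3(\mathcal T_0)$ and add the Jensen step, which is what the paper does.
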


\begin{proof}
Conditional on \(\mathscr F_X\),  we write
\(
 Y_x
 =
 \sum_{X<p\leq X^{4/3}}a_{p,x}f(p),\) where
 \(a_{p,x}\coloneqq {B_X(x/p)}/{\sqrt x}.
\)
The coefficients \(a_{p,x}\) are then deterministic, while the variables
\((f(p))_{p>X}\) remain independent Rademacher random variables.
Normal Approximation Result~1 of \cite{Harper2023Large}, applied
conditionally on \(\mathscr F_X\) with smoothing parameter \(\eta=1\),
therefore gives
\[
 \p_X\big(\max_{x\in\mathcal T}Y_x\leq u\big)
 \leq
 \p_Z\big(\max_{x\in\mathcal T}Z_x\leq u+1\big)
 +O\bigl(E_2(\mathcal T)+E_3(\mathcal T)\bigr),
\]
where
\[
 E_2(\mathcal T)
 \coloneqq
 \sum_{x,y\in\mathcal T}
 \bigg(
 \sum_{X<p\leq X^{4/3}}
 a_{p,x}^2a_{p,y}^2
 \bigg)^{1/2}\quad
\text{and}\quad
 E_3(\mathcal T)
 \coloneqq
 \sum_{X<p\leq X^{4/3}}
 \bigg(
 \sum_{x\in\mathcal T}|a_{p,x}|
 \bigg)^3.
\]
Since every term in these expressions is non-negative, then
\(
 E_j(\mathcal T)\leq E_j(\mathcal T_0)
\) for $j=2,3$.
Thus it suffices to show that
\begin{equation}\label{eq:stein-E2E3-mean}
 \E E_2(\mathcal T_0)+\E E_3(\mathcal T_0)
 \ll X^{-1/2}\log^C X
\end{equation}
for some absolute constant \(C>0\).

We first estimate \(E_2(\mathcal T_0)\). If
\(x\in\mathcal T_0\) and \(p>X\), then
\(
 {4x}/{p}
 \leq X^{1/3}<X.
\)
Hence the moment estimates for \(B_X\) in
Lemma~\ref{lem:complete-moments} apply to \(B_X(x/p)\). For
\(x,y\in\mathcal T_0\), by Cauchy--Schwarz and Lemma~\ref{lem:complete-moments} with $r=4$ we get
\[
 \E\left(a_{p,x}^2a_{p,y}^2\right)
 \ll 
 \frac{1}{xy}\left(\E|B_X({x}/{p})|^4\right)^{1/2} 
\left(\E|B_X({y}/{p})|^4\right)^{1/2}
 \ll
 \frac{(1+x/p)(1+y/p)}{xy}\log^C X.
\]
Moreover, we have \( a_{p,x}a_{p,y}=0\) unless \( p\leq4\min(x,y).
\)
Suppose, without loss of generality, that \(x\leq y\). Then
\[
 \begin{aligned}
 \sum_{X<p\leq X^{4/3}}
 \E\left(a_{p,x}^2a_{p,y}^2\right)
 &\ll
 \frac{\log^C X}{xy}
 \sum_{X<p\leq4x}
 \Big(1+\frac{x}{p}\Big)
 \Big(1+\frac{y}{p}\Big)\\
 &\ll
 \log^C X
 \bigg(
 \frac1y
 +\frac1x\sum_{p>X}\frac1p
 +\sum_{p>X}\frac1{p^2}
 \bigg)\ll X^{-1}\log^C X.
 \end{aligned}
\]
Here we used \(x,y\geq X^{8/7}\).
By Jensen's inequality,
\[
 \begin{aligned}
 \E\bigg(
 \sum_{X<p\leq X^{4/3}}
 a_{p,x}^2a_{p,y}^2
 \bigg)^{1/2}
 &\leq
 \bigg(
 \sum_{X<p\leq X^{4/3}}
 \E(a_{p,x}^2a_{p,y}^2)
 \bigg)^{1/2}\ll X^{-1/2}\log^C X.
 \end{aligned}
\]
Since \(\#\mathcal T_0\ll\log X\), it follows that
\begin{equation}\label{eq:stein-E2-mean}
 \E E_2(\mathcal T_0)
 \ll X^{-1/2}\log^C X.
\end{equation}

We next estimate \(E_3(\mathcal T_0)\). Again by
Lemma~\ref{lem:complete-moments} with  $r=3$,
\[
 \E|a_{p,x}|^3
 \ll
 x^{-3/2}
 \left(1+\frac{x}{p}\right)^{3/2}
 \log^C X,
\]
and \(a_{p,x}=0\) unless \(p\leq4x\). Since
\(
 (1+u)^{3/2}\ll1+u^{3/2},
\)
we obtain
\begin{equation}\label{eq:stein-third-sum}
\begin{aligned}
 &\sum_{X<p\leq X^{4/3}}
 \sum_{x\in\mathcal T_0}\E|a_{p,x}|^3
 \ll \#\mathcal T_0 \sum_{X<p\leq X^{4/3}}\E|a_{p,x}|^3\\
 &\ll
 \log^C X
 \left(
 x^{-3/2}\#\{X<p\leq4x\}
 +
 \sum_{p>X}\frac1{p^{3/2}}
 \right)
 \ll X^{-1/2}\log^C X,
 \end{aligned}
\end{equation}
where  \(\#\mathcal T_0\ll\log X\) has again been absorbed
into the power of \(\log X\).
Now
since
\(
 \left(\sum_{x\in\mathcal T_0}|a_{p,x}|\right)^3
 \leq
 (\#\mathcal T_0)^2
 \sum_{x\in\mathcal T_0}|a_{p,x}|^3,
\)
taking expectations and summing over \(p\), together with
\eqref{eq:stein-third-sum}, we obtain
\begin{equation}\label{eq:stein-E3-mean}
 \E E_3(\mathcal T_0)
 \ll X^{-1/2}\log^C X.
\end{equation}
Combining \eqref{eq:stein-E2-mean} and
\eqref{eq:stein-E3-mean} proves \eqref{eq:stein-E2E3-mean} and hence completes the proof.
\end{proof}

\section{A common logarithmic shift}\label{sec:common-shift}
The estimate in Proposition~\ref{prop:old-prime-energy} controls the
\(X\)-smooth contribution only after averaging over the endpoint, whereas
the high moment argument in Section~\ref{sec:covariances} relies on an
exactly logarithmically spaced grid. We therefore average over common
logarithmic shifts of this grid. Outside a small exceptional event, this
shows that, for all but a small proportion of the shifts, the \(X\)-smooth
contribution is small at most relevant endpoints.

For sufficiently large \(X\), define
\[
 J\coloneqq
 \left\lfloor
 \frac{(4/21)\log X-\log4-2\pi}{2\pi}
 \right\rfloor\asymp\log X
\]
so that $ e^{2\pi}X^{8/7}\leq x_{r,\theta},
\ 4x_{r,\theta}\leq X^{4/3}$, where $ x_{r,\theta}\coloneqq X^{8/7}e^{\theta+2\pi r}$ for \(0\leq\theta<2\pi\) and \(1\leq r\leq J\).
Moreover, we have
\(
 \log x_{r,\theta}-\log x_{s,\theta}=2\pi(r-s),
\) which is
independent of \(\theta\).

\begin{lemma}[Averaging over a common logarithmic shift]
\label{lem:shift-occupancy}
Let \(A>0\). For \(0\leq\theta<2\pi\), let
\(\mathcal R_\theta(A)\) be the set of \(1\leq r\leq J\) such that
\(
 |C_X(x_{r,\theta})|\leq A\) and \( |C_X(4x_{r,\theta})|\leq A.
\)
Then
\begin{equation}\label{eq:mean-bad-occupancy}
 \frac1{2\pi}\int_0^{2\pi}
 \frac{J-\#\mathcal R_\theta(A)}{J}\,\mathrm{d}\theta
 \ll
 \frac1{A^2\log X}
 \int_{X^{8/7}}^{X^{4/3}}
 |C_X(z)|^2\frac{\mathrm{d}z}{z}.
\end{equation}

\end{lemma}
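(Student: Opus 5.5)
The plan is a Chebyshev bound on each grid point, followed by Fubini in $\theta$: the average over the shift $\theta$ of a sum over $r$ collapses into a single logarithmic integral in $z$. For each $r$ with $r\notin\mathcal R_\theta(A)$, either $|C_X(x_{r,\theta})|>A$ or $|C_X(4x_{r,\theta})|>A$. Hence
\[
 J-\#\mathcal R_\theta(A)\leq\sum_{r=1}^J\Big(\mathbf 1_{\{|C_X(x_{r,\theta})|>A\}}+\mathbf 1_{\{|C_X(4x_{r,\theta})|>A\}}\Big)
 \leq\frac1{A^2}\sum_{r=1}^J\Big(|C_X(x_{r,\theta})|^2+|C_X(4x_{r,\theta})|^2\Big).
\]
Both steps are pointwise, so no probabilistic input is needed. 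The resulting inequality holds deterministically for every realisation of $(f(p))_{p\leq X}$.

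Next, integrate in $\theta$ over $[0,2\pi)$ and substitute $z=X^{8/7}e^{\theta+2\pi r}$ in the $r$-th term, so that $\mathrm d\theta=\mathrm dz/z$. As $\theta$ runs over $[0,2\pi)$ and $r$ over $1\leq r\leq J$, the points $z$ run over disjoint consecutive logarithmic intervals $[X^{8/7}e^{2\pi r},X^{8/7}e^{2\pi(r+1)})$. By the choice of $J$ (which guarantees $e^{2\pi}X^{8/7}\leq x_{r,\theta}$ and $4x_{r,\theta}\leq X^{4/3}$), these intervals lie inside $[X^{8/7},X^{4/3}]$. This gives
\[
 \int_0^{2\pi}\sum_{r=1}^J|C_X(x_{r,\theta})|^2\,\mathrm d\theta\leq\int_{X^{8/7}}^{X^{4/3}}|C_X(z)|^2\frac{\mathrm dz}{z}.
\]

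For the terms at $4x_{r,\theta}$, the same substitution with $z=4X^{8/7}e^{\theta+2\pi r}$ again gives disjoint intervals. These are contained in $[4X^{8/7}e^{2\pi},X^{4/3}]\subset[X^{8/7},X^{4/3}]$, since $4x_{r,\theta}\leq X^{4/3}$ for all $\theta<2\pi$ and $r\leq J$. So that sum is bounded by the same integral. Since the integrands are nonnegative and measurable ($C_X$ is a step function in $z$), Tonelli justifies interchanging the sum and the integral.

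Finally, dividing by $2\pi J$ and using $J\asymp\log X$ yields \eqref{eq:mean-bad-occupancy} with an absolute implied constant. The only point needing care is the verification that both families of intervals stay inside $[X^{8/7},X^{4/3}]$ and do not overlap within each family. This follows from the definition of $J$, so I do not expect any real obstacle; the lemma is a bookkeeping step.
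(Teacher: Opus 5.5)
Your proposal is correct and follows essentially the same argument as the paper: a pointwise Chebyshev bound on the two indicators. This is followed by the substitutions $z=x_{r,\theta}$ and $z=4x_{r,\theta}$, which turn the $\theta$-integral of the sum over $r$ into logarithmic integrals over disjoint intervals inside $[X^{8/7},X^{4/3}]$, and finally $J\asymp\log X$.
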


\begin{proof}
Starting with the elementary inequality
\[
 J-\#\mathcal R_\theta(A)
 \leq
 \sum_{r=1}^J
 \left(
 \mathbf 1_{\{|C_X(x_{r,\theta})|>A\}}
 +
 \mathbf 1_{\{|C_X(4x_{r,\theta})|>A\}}
 \right),
\]
we obtain
\[
 \begin{aligned}
 \int_0^{2\pi}
 \bigl(J-\#\mathcal R_\theta(A)\bigr)\,\mathrm{d}\theta
\leq
 \frac1{A^2}
 \int_0^{2\pi}
 \sum_{r=1}^J
 \left(
 |C_X(x_{r,\theta})|^2
 +
 |C_X(4x_{r,\theta})|^2
 \right)\mathrm{d}\theta.
 \end{aligned}
\]
For each of the two families \(x_{r,\theta}\) and \(4x_{r,\theta}\),
the changes of variables
\(
 z=x_{r,\theta}\)
 {and} \(
 z=4x_{r,\theta}
\)
give \(\mathrm{d}z/z=\mathrm{d}\theta\). As \(r\) varies, the
corresponding logarithmic intervals are disjoint apart from their
endpoints and lie in \([X^{8/7},X^{4/3}]\). Hence
\[
 \int_0^{2\pi}
 \sum_{r=1}^J
 \left(
 |C_X(x_{r,\theta})|^2
 +
 |C_X(4x_{r,\theta})|^2
 \right)\mathrm{d}\theta
 \ll
 \int_{X^{8/7}}^{X^{4/3}}
 |C_X(z)|^2\frac{\mathrm{d}z}{z}.
\]
Since \(J\asymp\log X\), this proves
\eqref{eq:mean-bad-occupancy}.
\end{proof}

Taking now
\(A=e^{-1.2W}(\log\log X)^{1/4}\) and using
Proposition~\ref{prop:old-prime-energy}, the
right-hand side of \eqref{eq:mean-bad-occupancy} is
\(
 \ll e^{2.4W}(\log\log X)^{-1/6}.
\) By Markov's inequality, we have
\[
 \#\mathcal R_\theta(A)
 \ge
 \left(1-O\!\left(
 e^{1.2W}(\log\log X)^{-1/12}
 \right)\right)J
\]
for all but
\(O(e^{1.2W}(\log\log X)^{-1/12})\) of the shifts \(\theta\).

\section{From increments to two-sided fluctuations}\label{sec:two-sided}
We now assemble the four ingredients needed to prove Theorem~\ref{thm:one-scale}:
smallness of the \(X\)-smooth contribution on most shifted endpoints,
a lower bound for the conditional variances of the increments, sparsity
of their large conditional covariances, and the conditional Gaussian
approximation. These are supplied, respectively, by
Proposition~\ref{prop:old-prime-energy} together with
Lemma~\ref{lem:shift-occupancy},
Proposition~\ref{prop:B-variance},
Proposition~\ref{prop:weighted-covariance}, and
Lemma~\ref{lem:conditional-stein}.

The exceptional events in these four inputs arise separately. Before
carrying out the Gaussian comparison, we collect them into a single
\(\mathscr F_X\)-measurable good event and restrict to a common set of
shifts on which all four conclusions hold simultaneously. The next lemma records
this reduction.

Throughout this section we write
$A\coloneqq e^{-1.2W}(\log\log X)^{1/4}$.

\begin{lemma}[A simultaneous good event]
\label{lem:aggregate-good-event}
Suppose that \(W_0\leq W\leq(\log\log X)^{1/100}\).
Then there is an \(\mathscr F_X\)-measurable event \(\Omega_X(W)\) satisfying
\begin{equation}\label{eq:old-good-probability}
 \p(\Omega_X(W)^c)
 \ll
 (\log\log X)^{-1/200}
 +e^{-W/10}.
\end{equation}
On \(\Omega_X(W)\), there is a measurable set
\(\Theta_X\subset[0,2\pi)\) of normalised measure
\[
 1-O\left(
 e^{1.2W}(\log\log X)^{-1/12}+X^{-1/8}
 \right)
\]
such that, for every \(\theta\in\Theta_X\), the following hold:
\begin{enumerate}
 \item
 \[
  \#\mathcal R_\theta(A)
  \geq
  \left(
  1-O\left(e^{1.2W}(\log\log X)^{-1/12}\right)
  \right)J.
 \]

 \item For every \(1\leq r\leq J\),
 \begin{equation}\label{eq:increment-variance-input}
  \var_X(Y_{x_{r,\theta}})
  \gg e^{-2.2W}(\log\log X)^{-1/2}.
 \end{equation}

 \item The graph on \(\{1,\ldots,J\}\) in which \(r\ne s\) are joined when
 \begin{equation}\label{eq:bad-covariance-edge}
  \left|
  \cov_X(Y_{x_{r,\theta}},Y_{x_{s,\theta}})
  \right|
  >(\log\log X)^{-0.59}
 \end{equation}
 has maximum degree at most \((\log X)^{0.8}\).

 \item For
 \(\mathcal T_0(\theta)=\{x_{r,\theta}:1\leq r\leq J\}\), we have
 \[
  \operatorname{Err}_X(\mathcal T_0(\theta))\leq X^{-1/8}.
 \]
\end{enumerate}
\end{lemma}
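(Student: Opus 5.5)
The plan is to define \(\Omega_X(W)\) as the intersection of four \(\mathscr F_X\)-measurable good events, one for each input, and then to choose \(\Theta_X\) by Markov's inequality in \(\theta\).

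\textbf{The four events.}

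First, let \(\Omega^{(1)}\) be the event that
\[
 \frac1{\log X}\int_{X^{8/7}}^{X^{4/3}}|C_X(z)|^2\,\frac{\mathrm dz}{z}\le(\log\log X)^{1/3}.
\]
By Proposition~\ref{prop:old-prime-energy}, \(\p((\Omega^{(1)})^c)\ll(\log\log X)^{-1/200}\).

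Second, let \(\Omega^{(2)}\) be the exceptional-complement event from Proposition~\ref{prop:B-variance}. Its proof shows that one event, depending only on \(F_X\), works simultaneously for every \(x\in[X^{8/7},X^{4/3}/4]\). Hence on \(\Omega^{(2)}\), item (2) holds for every \(\theta\) and every \(r\), and \(\p((\Omega^{(2)})^c)\ll e^{-0.1W}\). Here we use \(W\le(\log\log X)^{1/100}\), as that proposition requires.

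Third, let \(\Omega^{(3)}\) be the good event of Proposition~\ref{prop:weighted-covariance}. It is uniform in \(\theta\), so item (3) holds for all \(\theta\), and \(\p((\Omega^{(3)})^c)\ll(\log\log X)^{-0.1}\).

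Fourth, for the Gaussian error, set
\[
 E(\theta)=\operatorname{Err}_X(\mathcal T_0(\theta)).
\]
Lemma~\ref{lem:conditional-stein} applies with the deterministic set \(\mathcal T_0(\theta)\), which has \(\#\mathcal T_0(\theta)=J\ll\log X\) and lies in \([X^{8/7},X^{4/3}/4]\). It gives
\[
 \E\, E(\theta)\ll X^{-1/2}\log^C X
\]
uniformly in \(\theta\). (We assume \(E(\theta)\) can be taken jointly measurable in \(\theta\), e.g.\ as the explicit \(E_2+E_3\) bound from that proof.) By Tonelli,
\[
 \E\int_0^{2\pi}E(\theta)\,\mathrm d\theta\ll X^{-1/2}\log^CX.
\]
Let \(\Omega^{(4)}\) be the event \(\int_0^{2\pi}E(\theta)\,\mathrm d\theta\le X^{-1/4}\). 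Markov's inequality gives \(\p((\Omega^{(4)})^c)\ll X^{-1/4}\log^CX\), which is negligible.

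Put \(\Omega_X(W)=\Omega^{(1)}\cap\Omega^{(2)}\cap\Omega^{(3)}\cap\Omega^{(4)}\). Then \(\p(\Omega_X(W)^c)\ll(\log\log X)^{-1/200}+e^{-W/10}\), since \((\log\log X)^{-0.1}\) and \(X^{-1/4}\log^C X\) are absorbed into the first term.

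\textbf{Choosing \(\Theta_X\).} On \(\Omega^{(4)}\), Markov's inequality in \(\theta\) shows that the set of \(\theta\) with \(E(\theta)>X^{-1/8}\) has measure \(\le X^{1/8}\cdot X^{-1/4}=X^{-1/8}\).

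On \(\Omega^{(1)}\), Lemma~\ref{lem:shift-occupancy} with \(A=e^{-1.2W}(\log\log X)^{1/4}\) bounds the \(\theta\)-average of \((J-\#\mathcal R_\theta(A))/J\) by
\[
 \ll A^{-2}(\log\log X)^{1/3}=e^{2.4W}(\log\log X)^{-1/6}.
\]
Markov's inequality at threshold \(\delta=e^{1.2W}(\log\log X)^{-1/12}\) then shows that the set of \(\theta\) with \((J-\#\mathcal R_\theta)/J>\delta\) has normalised measure \(\ll\delta\). This is exactly the computation recorded after Lemma~\ref{lem:shift-occupancy}.

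Let \(\Theta_X\) be the complement of these two bad sets. It has normalised measure \(1-O(\delta+X^{-1/8})\), and items (1) and (4) hold on it. Items (2) and (3) hold for all \(\theta\) on \(\Omega_X(W)\).

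\textbf{Measurability.} The only delicate point is ensuring \(\Theta_X\) is measurable and that the exceptional events in items (2) and (3) do not depend on \(\theta\).

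For the first, both \(\theta\mapsto\#\mathcal R_\theta(A)\) and \(E(\theta)\) are built from finitely many piecewise-constant (in \(x\)) \(X\)-smooth sums. They are therefore Borel in \(\theta\), and the Markov arguments in \(\theta\) are legitimate.

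For the second, uniformity in \(\theta\) was built into Propositions~\ref{prop:B-variance} and~\ref{prop:weighted-covariance}, so a single event serves for all \(\theta\).
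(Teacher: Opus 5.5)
Your proposal is correct and follows the paper's proof essentially step for step. You use the same four $\mathscr F_X$-measurable good events, coming from Proposition~\ref{prop:old-prime-energy}, Proposition~\ref{prop:B-variance}, Proposition~\ref{prop:weighted-covariance} and Lemma~\ref{lem:conditional-stein} (the last via Fubini and Markov at level $X^{-1/4}$), and then apply Markov's inequality in $\theta$ to remove the bad shifts for items (1) and (4). Your remark on joint measurability of $\theta\mapsto\operatorname{Err}_X(\mathcal T_0(\theta))$ is a small extra piece of care that the paper leaves implicit.
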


\begin{proof}
By Proposition~\ref{prop:old-prime-energy}, outside an
\(\mathscr F_X\)-measurable event of probability
\(O((\log\log X)^{-1/200})\), we have
\[
 \frac1{\log X}\int_{X^{8/7}}^{X^{4/3}}
 |C_X(z)|^2\frac{\mathrm{d}z}{z}
 \leq(\log\log X)^{1/3}.
\]
Since \(A=e^{-1.2W}(\log\log X)^{1/4}\),
Lemma~\ref{lem:shift-occupancy} then gives
\[
 \begin{aligned}
 \frac1{2\pi}\int_0^{2\pi}
 \frac{J-\#\mathcal R_\theta(A)}{J}\,\mathrm{d}\theta
 &\ll
 A^{-2}(\log\log X)^{1/3}=
 e^{2.4W}(\log\log X)^{-1/6}.
 \end{aligned}
\]
Applying Markov's inequality in \(\theta\) with threshold
\(e^{1.2W}(\log\log X)^{-1/12}\), we see that
\[
 \begin{aligned}
 &\frac1{2\pi}\operatorname{meas}\left\{
 \theta\in[0,2\pi):
 \frac{J-\#\mathcal R_\theta(A)}{J}
 \gg
  e^{1.2W}(\log\log X)^{-1/12}
 \right\}\\
 &\quad \ll
{
 e^{-1.2W}(\log\log X)^{1/12}
 }
 \cdot
 \frac1{2\pi}\int_0^{2\pi}
 \frac{J-\#\mathcal R_\theta(A)}{J}\,\mathrm{d}\theta
 \ll
 e^{1.2W}(\log\log X)^{-1/12}.
 \end{aligned}
\]
This proves the first assertion.

Proposition~\ref{prop:B-variance} gives
\eqref{eq:increment-variance-input}, simultaneously for all relevant
endpoints and hence for every shifted grid, outside an event of probability
\(O(e^{-W/10})\). Likewise,
Proposition~\ref{prop:weighted-covariance} gives the third assertion
simultaneously for every \(\theta\), outside an event of probability
\(O((\log\log X)^{-0.1})\).

It remains to control the Gaussian-approximation error for most shifts.
For each fixed \(\theta\), the set \(\mathcal T_0(\theta)\) is deterministic.
Hence by Lemma~\ref{lem:conditional-stein}, Fubini's theorem, and Markov's
inequality we have
\[
 \begin{aligned}
&\p\left(
 \frac1{2\pi}\int_0^{2\pi}
 \operatorname{Err}_X(\mathcal T_0(\theta))\,\mathrm{d}\theta
 >X^{-1/4}
 \right)
 \ll 
 X^{1/4}\ \E\left[
 \frac1{2\pi}\int_0^{2\pi}
 \operatorname{Err}_X(\mathcal T_0(\theta))\,\mathrm{d}\theta
 \right]\\
 &\qquad=
 X^{1/4}\ \frac1{2\pi}\int_0^{2\pi}
 \E\operatorname{Err}_X(\mathcal T_0(\theta))\,\mathrm{d}\theta
 \ll 
 X^{1/4}\cdot X^{-1/2}\log^C X
 =
 X^{-1/4}\log^C X.
 \end{aligned}
\]
Outside this exceptional event, another application of Markov's inequality,
now in \(\theta\), yields
\[
 \begin{aligned}
 \frac1{2\pi}\operatorname{meas}\left\{
 \theta:
 \operatorname{Err}_X(\mathcal T_0(\theta))>X^{-1/8}
 \right\}
 &\leq
 X^{1/8}
 \frac1{2\pi}\int_0^{2\pi}
 \operatorname{Err}_X(\mathcal T_0(\theta))\,\mathrm{d}\theta\leq X^{-1/8}.
 \end{aligned}
\]
Thus the fourth assertion holds outside a set of shifts of normalised
measure at most \(X^{-1/8}\).

Finally, intersecting the four \(\mathscr F_X\)-measurable good events
gives \(\Omega_X(W)\), and the union bound gives
\eqref{eq:old-good-probability}. Intersecting the sets of shifts arising
from the first and fourth assertions gives \(\Theta_X\), whose normalised
measure is at least
\[
 1-O\left(
 e^{1.2W}(\log\log X)^{-1/12}+X^{-1/8}
 \right).
\]
This completes the proof.
\end{proof}

With Lemma~\ref{lem:aggregate-good-event} in place, we are now in a position to
prove Theorem~\ref{thm:one-scale}.

\begin{proof}[Proof of Theorem~\ref{thm:one-scale}]
Let $W_0$ sufficiently large,  suppose that
\(
 W_0\leq W\leq (1/{30})\log\log\log X.
\)
By Lemma~\ref{lem:aggregate-good-event}, we have
\[
 \p(\Omega_X(W)^c)
 \ll e^{-W/10}+(\log\log X)^{-1/200}\ll e^{-W/10},
\]
and the complement of
\(\Theta_X\) has normalised measure
\[
 \ll e^{1.2W}(\log\log X)^{-1/12}+X^{-1/8}
 \ll(\log\log X)^{-13/300}.
\]
Thus \(|\Theta_X|/(2\pi)=
 1-O\left(
 (\log\log X)^{-13/300}
 \right)\). 

Work on \(\Omega_X(W)\), and fix \(\theta\in\Theta_X\).
By parts (1) and (3) of Lemma~\ref{lem:aggregate-good-event}, and by the greedy algorithm, there exists a measurable
subset \(\mathcal I_\theta\subseteq\mathcal R_\theta(A)\) such that
\begin{equation}\label{eq:independent-set-size}
 \#\mathcal I_\theta
 \geq
 \frac{\left(
  1-O\left(e^{1.2W}(\log\log X)^{-1/12}\right)
  \right)J}{1+(\log X)^{0.8}}
 \geq(\log X)^{0.19}.
\end{equation}

Let \((Z_r)_{r\in\mathcal I_\theta}\) be the centred Gaussian vector
having the same covariance matrix as
\((Y_{x_{r,\theta}})_{r\in\mathcal I_\theta}\).
For distinct \(r,s\in\mathcal I_\theta\), the part (2) and (3) of Lemma~\ref{lem:aggregate-good-event} give
\[
 \begin{aligned}
 \bigg|
 \cov\bigg(
 \frac{Z_r}{\sqrt{\var_X(Y_{x_{r,\theta}})}},
 \frac{Z_s}{\sqrt{\var_X(Y_{x_{s,\theta}})}}
 \bigg)
 \bigg|
 &\ll
 \frac{(\log\log X)^{-0.59}}
 {e^{-2.2W}(\log\log X)^{-1/2}}\ll(\log\log X)^{-1/60}.
 \end{aligned}
\]
Thus, for sufficiently large \(X\), Normal Comparison Result~1 of
\cite{Harper2023Large} applies to these standardised Gaussian variables
with \(\delta=1/100\). Moreover, since
\(\log\#\mathcal I_\theta\geq0.19\log\log X\), after choosing \(W_0\)
sufficiently large we have, uniformly for \(r\in\mathcal I_\theta\),
\[
 \begin{aligned}
 \frac{5A}{\sqrt{\var_X(Y_{x_{r,\theta}})}}
 &\ll
 e^{-0.1W}\sqrt{\log\log X}\leq
 \sqrt{\big(2-\frac1{100}\big)
 \log\#\mathcal I_\theta}.
 \end{aligned}
\]
It follows that
\[
\begin{aligned}
 \p_Z\Big(
 \max_{r\in\mathcal I_\theta}Z_r\leq5A
 \Big)
 &\leq
 \p_Z\bigg(
 \max_{r\in\mathcal I_\theta}
 \frac{Z_r}{\sqrt{\var_X(Y_{x_{r,\theta}})}}
 \leq
 \sqrt{\big(2-\frac1{100}\big)
 \log\#\mathcal I_\theta}
 \bigg)
 =o(1),
 \end{aligned}
\]
by Normal Comparison Result~1 of \cite{Harper2023Large}.

Applying Lemma~\ref{lem:conditional-stein} with
\(\mathcal T=\{x_{r,\theta}:r\in\mathcal I_\theta\}\) and
\(\mathcal T_0=\mathcal T_0(\theta)\), and using
\(\operatorname{Err}_X(\mathcal T_0(\theta))\leq X^{-1/8}\), we obtain
\[
 \begin{aligned}
 &\p_X\Big(
 \max_{r\in\mathcal I_\theta}
 \big(U_X(4x_{r,\theta})-U_X(x_{r,\theta})\big)
 \leq4A
 \Big)\leq
 \p_Z\left(
 \max_{r\in\mathcal I_\theta}Z_r\leq5A
 \right)+X^{-1/8}
 =o(1),
 \end{aligned}
\]
since
\(
 A=e^{-1.2W}(\log\log X)^{1/4}
 \geq(\log\log X)^{21/100},
\)
we have \(4A+1\leq5A\) for sufficiently large \(X\).
Thus
\begin{equation}\label{eq:fresh-increment-max}
 \p_X\Big(
 \max_{r\in\mathcal I_\theta}
 \big(U_X(4x_{r,\theta})-U_X(x_{r,\theta})\big)
 \geq4A
 \Big)
 =1-o(1).
\end{equation}

On the event in \eqref{eq:fresh-increment-max}, the range of the values
\(U_X(x_{r,\theta})\) and \(U_X(4x_{r,\theta})\),
\(r\in\mathcal I_\theta\), is at least \(4A\).
 Conditional on \(\mathscr F_X\), these values form a centrally symmetric
vector, since simultaneously replacing \(f(p)\) by \(-f(p)\) for
\(X<p\leq X^{4/3}\) replaces every $\epsilon_p$ in the definition \eqref{eq:fresh-decomposition} of $U_X$ by its negative. Hence
\[
 \begin{aligned}
 &\p_X\bigg(
 \max_{\substack{r\in\mathcal I_\theta\\
 z\in\{x_{r,\theta},4x_{r,\theta}\}}}
 U_X(z)\geq2A
 \bigg)=
 \p_X\bigg(
 \min_{\substack{r\in\mathcal I_\theta\\
 z\in\{x_{r,\theta},4x_{r,\theta}\}}}
 U_X(z)\leq-2A
 \bigg)
 \geq\frac12-o(1).
 \end{aligned}
\]
Since
\(\mathcal I_\theta\subseteq\mathcal R_\theta(A)\), every endpoint
appearing above satisfies \(|C_X(z)|\leq A\). Recalling
\(S_f(z)/\sqrt z=C_X(z)+U_X(z)\), we obtain, uniformly for
\(\theta\in\Theta_X\),
\[
 \p_X\bigg(
 \max_{\substack{r\in\mathcal I_\theta\\
 z\in\{x_{r,\theta},4x_{r,\theta}\}}}
 \frac{S_f(z)}{\sqrt z}\geq A
 \bigg),\ 
 \p_X\bigg(
 \min_{\substack{r\in\mathcal I_\theta\\
 z\in\{x_{r,\theta},4x_{r,\theta}\}}}
 \frac{S_f(z)}{\sqrt z}\leq-A
 \bigg)
 \geq\frac12-o(1).
\]

All these endpoints lie in \([X^{8/7},X^{4/3}]\). Averaging over the
common shift and using \(|\Theta_X|/(2\pi)=1-o(1)\), we therefore have
\[
 \begin{aligned}
 \p_X\left(
 \max_{X^{8/7}\leq z\leq X^{4/3}}
 \frac{S_f(z)}{\sqrt z}\geq A
 \right)
 &\geq
 \frac1{2\pi}\int_{\Theta_X}
 \p_X\bigg(
 \max_{\substack{r\in\mathcal I_\theta\\
 z\in\{x_{r,\theta},4x_{r,\theta}\}}}
 \frac{S_f(z)}{\sqrt z}\geq A
 \bigg)\mathrm{d}\theta\\
 &\geq
 \frac{|\Theta_X|}{2\pi}
 \left(\frac12-o(1)\right)
 =
 \frac12-o(1)\ge \frac13,
 \end{aligned}
\]
and, in the same way,
\[
 \p_X\left(
 \min_{X^{8/7}\leq z\leq X^{4/3}}
 \frac{S_f(z)}{\sqrt z}\leq-A
 \right)
 \geq\frac13.
\]
Therefore we finish the proof of Theorem~\ref{thm:one-scale}.
\end{proof}

\bibliographystyle{plainnat}
\bibliography{refs}

@article{Wintner1944,
  author  = {Wintner, Aurel},
  title   = {Random factorizations and {R}iemann's hypothesis},
  journal = {Duke Math. J.},
  volume  = {11},
  number  = {2},
  year    = {1944},
  pages   = {267--275},
  doi     = {10.1215/S0012-7094-44-01122-1}
}

@article{kucheriaviy2025,
	title = {Positivity of partial sums of a random multiplicative function and corresponding problems for the Legendre symbol},
	author={Petr Kucheriaviy},
	year = {2025},
    note={Preprint available at arxiv.org/abs/2510.25691}
}

@book{Durrett2019,
  author    = {Durrett, Rick},
  title     = {Probability: Theory and Examples},
  edition   = {5th},
  series    = {Cambridge Series in Statistical and Probabilistic Mathematics},
  volume    = {49},
  publisher = {Cambridge University Press},
  address   = {Cambridge},
  year      = {2019}
}

@article{Soundararajan2000,
  author  = {Soundararajan, Kannan},
  title   = {Nonvanishing of quadratic Dirichlet {$L$}-functions at
             {$s=\frac12$}},
  journal = {Ann. of Math. (2)},
  volume  = {152},
  number  = {2},
  year    = {2000},
  pages   = {447--488},
  doi     = {10.2307/2261390}
}

@misc{HarperSoundararajanXu2026,
  author       = {Harper, Adam J. and Soundararajan, Kannan and Xu, Max Wenqiang},
  title        = {Distribution of random multiplicative functions in short intervals,
                  with proper normalization},
  howpublished = {arXiv:2606.29040},
  year         = {2026},
  url          = {https://arxiv.org/abs/2606.29040}
}

@misc{Verreault2026,
  author       = {Verreault, William},
  title        = {Almost sure upper bound for sums of random multiplicative
                  functions and critical chaos},
  howpublished = {arXiv:2608.21354},
  year         = {2026},
  url          = {https://arxiv.org/abs/2608.21354}
}

@misc{DurkanPearceCrump2026,
  author       = {Durkan, Benjamin and Pearce-Crump, Andrew},
  title        = {A sharp almost sure upper bound for partial sums of random
                  multiplicative functions},
  howpublished = {arXiv:2607.29429},
  year         = {2026},
  url          = {https://arxiv.org/abs/2607.29429}
}

@incollection{Halasz1983,
  author    = {Hal{\'a}sz, G{\'a}bor},
  title     = {On random multiplicative functions},
  booktitle = {Hubert Delange Colloquium (Orsay, 1982)},
  series    = {Publ. Math. Orsay},
  volume    = {83},
  publisher = {Univ. Paris XI},
  address   = {Orsay},
  year      = {1983},
  pages     = {74--96}
}

@article{LauTenenbaumWu2013,
  author  = {Lau, Yuk-Kam and Tenenbaum, G{\'e}rald and Wu, Jie},
  title   = {On mean values of random multiplicative functions},
  journal = {Proc. Amer. Math. Soc.},
  volume  = {141},
  number  = {2},
  year    = {2013},
  pages   = {409--420},
  doi     = {10.1090/S0002-9939-2012-11332-2}
}

@article{Basquin2012,
  author  = {Basquin, Joseph},
  title   = {Sommes friables de fonctions multiplicatives al{\'e}atoires},
  journal = {Acta Arith.},
  volume  = {152},
  number  = {3},
  year    = {2012},
  pages   = {243--266},
  doi     = {10.4064/aa152-3-2}
}

@article{Harper2013Gaussian,
  author  = {Harper, Adam J.},
  title   = {Bounds on the suprema of {G}aussian processes, and omega
             results for the sum of a random multiplicative function},
  journal = {Ann. Appl. Probab.},
  volume  = {23},
  number  = {2},
  year    = {2013},
  pages   = {584--616},
  doi     = {10.1214/12-AAP847}
}

@article{Harper2019High,
  author  = {Harper, Adam J.},
  title   = {Moments of random multiplicative functions, {II}: high moments},
  journal = {Algebra Number Theory},
  volume  = {13},
  number  = {10},
  year    = {2019},
  pages   = {2277--2321},
  doi     = {10.2140/ant.2019.13.2277}
}

@article{Harper2020Low,
  author  = {Harper, Adam J.},
  title   = {Moments of random multiplicative functions, {I}: low moments,
             better than squareroot cancellation, and critical multiplicative
             chaos},
  journal = {Forum Math. Pi},
  volume  = {8},
  year    = {2020},
  pages   = {e1},
  doi     = {10.1017/fmp.2019.7}
}

@article{Harper2023Large,
  author  = {Harper, Adam J.},
  title   = {Almost sure large fluctuations of random multiplicative functions},
  journal = {Int. Math. Res. Not. IMRN},
  volume  = {2023},
  number  = {3},
  year    = {2023},
  pages   = {2095--2138},
  doi     = {10.1093/imrn/rnab299}
}

@misc{Caich2023,
  author       = {Caich, Rachid},
  title        = {Almost sure upper bound for random multiplicative functions},
  howpublished = {arXiv:2304.00943},
  year         = {2023},
  url          = {https://arxiv.org/abs/2304.00943}
}

@article{Mastrostefano2022,
  author  = {Mastrostefano, Daniele},
  title   = {An almost sure upper bound for random multiplicative functions
             on integers with a large prime factor},
  journal = {Electron. J. Probab.},
  volume  = {27},
  year    = {2022},
  pages   = {Paper No. 32, 21 pp.},
  doi     = {10.1214/22-EJP751}
}

@article{Aymone2024,
  author  = {Aymone, Marco},
  title   = {Sign changes of the partial sums of a random multiplicative
             function {II}},
  journal = {C. R. Math.},
  volume  = {362},
  number  = {G8},
  year    = {2024},
  pages   = {895--901},
  doi     = {10.5802/crmath.615}
}

@article{AymoneHeapZhao2023,
  author  = {Aymone, Marco and Heap, Winston and Zhao, Jing},
  title   = {Sign changes of the partial sums of a random multiplicative function},
  journal = {Bull. Lond. Math. Soc.},
  volume  = {55},
  number  = {1},
  year    = {2023},
  pages   = {78--89},
  doi     = {10.1112/blms.12710}
}

@article{GeisHiary2025,
  author  = {Geis, Nick and Hiary, Ghaith},
  title   = {Counting sign changes of partial sums of random multiplicative functions},
  journal = {Q. J. Math.},
  volume  = {76},
  number  = {1},
  year    = {2025},
  pages   = {19--45},
  doi     = {10.1093/qmath/haae061}
}

@misc{Hoystad2026,
  author       = {Sigurd H{\o}ystad},
  title        = {Erd{\H{o}}s Problem \#1144: A Lean 4 proof of
                  almost-sure positive unboundedness},
  year         = {2026},
  month        = sep,
  howpublished = {\url{https://github.com/saasom/Erdos1144}},
  note         = {Released September 6, 2026}
}

@article{Aymone2025Average,
  author  = {Aymone, Marco},
  title   = {Sign changes of the partial sums of a random multiplicative function {III}: Average},
  journal = {Bull. Soc. Math. France},
  volume  = {153},
  number  = {4},
  year    = {2025},
  pages   = {1003--1013},
  doi     = {10.24033/bsmf.2915}
}

@misc{AngeloXu2024,
  author       = {Angelo, Rodrigo and Xu, Max Wenqiang},
  title        = {Oscillations of random multiplicative functions under
                  initial bias},
  howpublished = {arXiv:2411.14447},
  year         = {2026},
  url          = {https://arxiv.org/abs/2411.14447}
}

@misc{Atherfold2025,
  author       = {Atherfold, Christopher},
  title        = {Almost sure bounds for weighted sums of {R}ademacher random
                  multiplicative functions},
  howpublished = {arXiv:2501.11076},
  year         = {2025},
  url          = {https://arxiv.org/abs/2501.11076}
}

@book{MontgomeryVaughan2007,
  author    = {Montgomery, Hugh L. and Vaughan, Robert C.},
  title     = {Multiplicative Number Theory {I}: Classical Theory},
  series    = {Cambridge Studies in Advanced Mathematics},
  volume    = {97},
  publisher = {Cambridge University Press},
  address   = {Cambridge},
  year      = {2007}
}

@article{Huxley1972,
  author  = {Huxley, Martin N.},
  title   = {On the difference between consecutive primes},
  journal = {Invent. Math.},
  volume  = {15},
  year    = {1972},
  pages   = {164--170},
  doi     = {10.1007/BF01418933}
}

@article{Hardy2024Weighted,
  author  = {Hardy, Seth},
  title   = {Almost sure bounds for a weighted {S}teinhaus random multiplicative function},
  journal = {J. Lond. Math. Soc. (2)},
  volume  = {110},
  number  = {3},
  year    = {2024},
  pages   = {e12979},
  doi     = {10.1112/jlms.12979}
}

@unpublished{AtherfoldGerspachHamdan2026,
  author = {Atherfold, Christopher and Gerspach, Maxim and Hamdan, Jad},
  title  = {Low moments of extended {R}ademacher random multiplicative functions},
  year   = {2026},
  note   = {In preparation}
}

@misc{KlurmanLamzouriMunsch2024,
  author       = {Klurman, Oleksiy and Lamzouri, Youness and Munsch, Marc},
  title        = {Sign changes of short character sums and real zeros
                  of {Fekete} polynomials},
  year         = {2024},
  howpublished = {arXiv:2403.02195},
  note         = {Version 2},
  eprint       = {2403.02195},
  archivePrefix = {arXiv},
  primaryClass = {math.NT},
  doi          = {10.48550/arXiv.2403.02195},
  url          = {https://arxiv.org/abs/2403.02195v2}
}

@misc{HardyKlurman2026,
  author = {Seth Hardy and Oleksiy Klurman},
  title  = {Personal communication},
  year   = {2026}
}

@article{Gerspach2022,
  author  = {Gerspach, Maxim},
  title   = {Low Pseudomoments of the Riemann Zeta Function and Its Powers},
  journal = {International Mathematics Research Notices},
  year    = {2022},
  volume  = {2022},
  number  = {1},
  pages   = {625--664},
  doi     = {10.1093/imrn/rnaa159},
}

@inproceedings{Erdos1985,
  author    = {Erd{\H{o}}s, Paul},
  title     = {Some Applications of Probability Methods to Number Theory},
  booktitle = {Proceedings of the 4th Pannonian Symposium on Mathematical Statistics},
  address   = {Bad Tatzmannsdorf, Austria},
  year      = {1985},
  pages     = {1--18},
  publisher = {Reidel},
  location  = {Dordrecht},
  note      = {Mathematical Statistics and Applications, Vol. B; symposium held in 1983}
}

@misc{PaulsProblems1999,
  author = {{Various}},
  title  = {Some of {P}aul's favorite problems},
  year   = {1999},
  note   = {Booklet produced for the conference ``Paul Erd\H{o}s and his
            mathematics'', Budapest, July 1999; see Problem 1.11},
  url    = {https://web.math.pmf.unizg.hr/~vjekovac/EP/Some_of_Pauls_favorite_problems.pdf}
}

@misc{Bloom1144,
  author = {Bloom, Thomas F.},
  title  = {Erd{\H{o}}s Problem \#1144},
  year   = {2026},
  note   = {Accessed 6 September 2026},
  url    = {https://www.erdosproblems.com/1144}
}

\end{document}